\documentclass{article}
\usepackage{graphicx}
\usepackage{amsmath,mathrsfs,amsthm,amssymb}
\usepackage{enumerate,enumitem}
\usepackage[thicklines]{cancel}
\usepackage{authblk}
\usepackage[all]{xy}
\title{Cohomologies, non-abelian extensions and Wells sequences of $\lambda$-weighted Rota-Baxter Lie coalgebras}
\author[a]{Qinxiu Sun \thanks{Corresponding author: \\Sun Qinxiu (qxsun@126.com), Li Zhen (lz200002272022@163.com).}}
\author[a]{Zhen Li}
\affil[a]{Department of Mathematics, Zhejiang University of Science and Technology, Hangzhou, 310023.}
\date{}
\newtheorem{Def}{Definition}[section]
\newtheorem{Theo}{Theorem}[section]
\newtheorem{rem}{Remark}[section]
\newtheorem{Prop}{Proposition}[section]

\newtheorem{lemma}{Lemma}[section]
\newtheorem{example}{Example}[section]
\numberwithin{equation}{section}
\begin{document}
	\maketitle
\begin{abstract}
In this paper, we investigate cohomologies and non-abelian
extensions of $\lambda$-weighted Rota-Baxter Lie coalgebras. First, we consider Lie comodules and cohomologies of
$\lambda$-weighted Rota-Baxter Lie coalgebras. Next, we study non-abelian extensions of $\lambda$-weighted Rota-Baxter
Lie coalgebras and classify the non-abelian extensions in terms of non-abelian cohomology group. Furthermore, we explore
extensibility of a pair of automorphisms about a non-abelian extension of $\lambda$-weighted Rota-Baxter Lie coalgebras,
and derive the fundamental sequences of Wells in the context of $\lambda$-weighted Rota-Baxter Lie coalgebras. Finally,
we discuss the previous results in the case of abelian extensions of $\lambda$-weighted Rota-Baxter Lie coalgebras.

\end{abstract}

{\bf MR Subject Classification 2020}: 16T15, 17B62, 17B56, 17B10.

\footnote{Keywords: Lie coalgebra,
Lie comodule, $\lambda$-weighted Rota-Baxter Lie coalgebras, non-abelian extension, extensibility, Wells sequence}

\section{Introduction}
Rota-Baxter operators on associative algebras arose initially in Baxter's study of the fluctuation theory in probability \cite{op1},
which can be viewed as an algebraic abstraction of the integral operators.
On the other hand, Rota-Baxter operators on Lie algebras were first considered
by Kuperscmidt in the study of classical $r$-matrices \cite{op12}.
Since then, Rota-Baxter operators have been investigated in connection with
many mathematical and physical branches, including combinatorics \cite{GG}, number theory \cite{GZ}, operads and
quantum field theory \cite{CK}.

Rota-Baxter operators with arbitrary weight (also called weighted Rota-Baxter operators) were considered
in \cite{wop1,wop2}. They are related with weighted infinitesimal bialgebras, weighted Yang-Baxter equations \cite{wop3}, combinatorics of
rooted forests \cite{wop5}, post-Lie algebras and modified Yang-Baxter
equations \cite{wop1}. Recently, Tang, Bai, Guo and Sheng \cite{TBGS} developed the deformation and
cohomology theory of $\mathcal{O}$-operators (also called relative Rota-Baxter operators) on Lie algebras,
with applications to Rota-Baxter Lie algebras in mind. Later, Das in \cite{wop7,co5} investigated the cohomologies of Rota-Baxter operators
 of arbitrary weights on associative algebras and Lie algebras. Wang and Zhou in \cite{wop8} explored cohomology and homotopy theories of Rota-Baxter algebras
with any weight. There are some other related work
concerning cohomologies of Rota-Baxter operators of any weight, see \cite{CLS,GQ} and there references.

The notion of a coalgebra is dual to the notion of an algebra. The theory of coassociative coalgebras
has been developed for a long time within the framework of the theory of Hopf algebras. Lie coalgebras were investigated earlier in \cite{co1} by
W. Michaelis. It is well known that the dual of a coassociative coalgebra is
 an associative algebra and the dual of a Lie coalgebra is a Lie algebra. Regarding applications, Lie coalgebras are not only closely related with Lie bialgebras and
 quantum groups, but also appeared in various fields such as homotopy theory \cite{SW}, noncommutative
 geometry \cite{GS}. Specifically, a bialgebra structure for Rota-Baxter
 Lie algebras was studied in \cite{RC3}. In a recent study, cohomologies of Lie coalgebras were examined by Du and Tan in \cite{co7}. But so far, cohomologies of $\lambda$-weighted Rota-Baxter
Lie coalgebras are still not given. This is our first
motivation for writing the present paper.

Extensions are useful mathematical objects to understand the
underlying structures. The non-abelian extension is a relatively
general one among various extensions (e.g. central extensions,
abelian extensions, non-abelian extensions etc.). Non-abelian
extensions were first developed by Eilenberg and Maclane \cite{nonC3},
which induce to the low dimensional non-abelian cohomology group.
Then numerous works have been devoted to non-abelian extensions of
various kinds of algebras, such as Lie (super)algebras, Leibniz algebras, Lie 2-algebras, Lie Yagamuti algebras, Rota-Baxter groups,
Rota-Baxter Lie algebras and Rota-Baxter Leibniz algebras, see \cite{nab6, CSZ, DR, F1, GKM, nab13, nab10,nab12,nonC2} and their references.
The abelian extensions of Lie coalgebras were explored
in \cite{aut1,co7} and characterized in terms of coderivations of Lie coalgebras.
But little is known about the non-abelian extension of Lie coalgebras particularly $\lambda$-weighted Rota-Baxter
Lie coalgebras. This is the second motivation for writing the present paper.

Another interesting study related to extensions of algebraic structures is given by the extensibility and inducibility of a pair
of automorphisms. When a pair of automorphisms is inducible? This problem was first
 considered by Wells \cite{W1} for abstract groups and further studied in \cite{aut5,aut6}.
Since then, several authors have studied this subject further, see \cite{BS,nab13,nab9,nonC2} and references therein.
The extensibility problem of a pair of derivations in abelian extensions were investigated in \cite{D1,TFS}.
Recently, the extensibility problem of a pair of derivations and automorphisms was extended to
the context of abelian extensions of Lie coalgebras \cite{aut1}. As byproducts, the Wells short exact sequences were obtained for
various kinds of algebras \cite{DR, aut1,GKM, nab13,nab9,aut5,nonC2}, which connected various automorphism groups and the non-abelian second cohomology.
Motivated by these results, we study extensibility of a pair of automorphisms in a non-abelian extension of $\lambda$-weighted Rota-Baxter
Lie coalgebras. This is another motivation for writing the present paper. We give a necessary and sufficient condition for
 a pair of automorphisms to be extensible. We also derive the analogue of the Wells short exact sequences in the context of
  non-abelian extensions of $\lambda$-weighted Rota-Baxter Lie coalgebras.

The paper is organized as follows. In Section 2, we introduce Lie comodules of $\lambda$-weighted Rota-Baxter
Lie coalgebras. In Section 3, we consider cohomologies of $\lambda$-weighted Rota-Baxter
Lie coalgebras. In Section 4, we
investigate non-abelian extensions and classify the non-abelian
extensions using non-abelian 2-cocycles. In Section 5,
we study the problem of when a pair of automorphisms is extensible. We address the  necessary and sufficient condition for
 a pair of automorphisms to be extensible.
 In Section 6, we build Wells short exact sequences in the context of non-abelian extensions of $\lambda$-weighted Rota-Baxter
Lie coalgebras. Finally, we discuss these results in the case of abelian extensions.

Throughout the paper, let $k$ be a field. Unless otherwise
specified, all vector spaces and algebras are finite dimensional
over $k$.

\section{Comodules of $\lambda$-weighted Rota-Baxter Lie coalgebras}
In this section, we introduce the Lie comodules of $\lambda$-weighted Rota-Baxter Lie coalgebras.
 We begin with recalling definitions of Lie coalgebras and $\lambda$-weighted Rota-Baxter Lie
 coalgebras \cite{RC3,co1}.
\begin{Def}
	\begin{enumerate}[label=$(\roman*)$]
		\item A Lie coalgebra is a vector space $C$ together with a linear map $\Delta_C:C\rightarrow C\otimes C$ satisfying
		\begin{equation}\Delta_C=-\tau\Delta_C,\label{1.1}\end{equation}
		\begin{equation}(I\otimes\Delta_C)\Delta_C-(\Delta_C\otimes I)\Delta_C+(I\otimes\tau)(\Delta_C\otimes I)\Delta_C=0.\label{LC}
		\end{equation}
		\item Let $(C,\Delta_C)$ be a Lie coalgebra. A linear operator $R_C:C\rightarrow C$ is called a $\lambda$-weighted $(\lambda\in k)$ Rota-Baxter operator if
		\begin{align}
			(R_C\otimes R_C)\Delta_C = (I\otimes R_C+R_C\otimes I+\lambda )\Delta_C R_C\label{R}.
		\end{align}
	Moreover, a Lie coalgebra $(C,\Delta_C)$ with a $\lambda$-weighted Rota-Baxter operator $R_C$ is called a $\lambda$-weighted Rota-Baxter Lie coalgebra (Rota-Baxter Lie coalgebra of weight $\lambda$).
	\end{enumerate}
\end{Def}
\begin{Def}
	Let $(C,\Delta_C,R_C)$ and $(C',\Delta_{C'},R_{C'})$ be two $\lambda$-weighted Rota-Baxter Lie coalgebras. A homomorphism from $(C,\Delta_C,R_C)$ to $(C',\Delta_{C'},R_{C'})$ is a Lie coalgebra homomorphism $\varphi:C \rightarrow C'$ satisfying $R_{C'}\varphi=\varphi R_C$.
\end{Def}
Let $(C,\Delta_C,R_C)$ be a $\lambda$-weighted Rota-Baxter Lie coalgebra.
 Denote the set of all automorphisms of $(C,\Delta_C,R_C)$ by $\mathrm{Aut}(C)$.
Obviously, $\mathrm{Aut}(C)$ is a group.
A right Lie comodule (corepresentation) of a Lie coalgebra $(C,\Delta_C)$ is a tuple $(M,\rho)$, where $M$ is a vector space and $\rho:M\rightarrow M\otimes C$ is a linear map satisfying \begin{align}
			(I\otimes \Delta_C)\rho-(\rho\otimes I)\rho+(I\otimes \tau)(\rho\otimes I)\rho=0\label{com}.
		\end{align}

\begin{Def}
	 Let $(C,\Delta_C,R_C)$ be a $\lambda$-weighted Rota-Baxter Lie coalgebra.
A right Lie comodule of $(C,\Delta_C,R_C)$ is a triple $(M,\rho,R_M)$, where $(M,\rho)$ is
a right Lie comodule of $(C,\Delta_C)$ and $R_M:M\rightarrow M$ is a linear map such that
		\begin{align}
			(R_M\otimes R_C)\rho=(R_M\otimes I+I\otimes R_C+\lambda )\rho R_M.\label{rR}
		\end{align}
\end{Def}
\begin{example}
	Any $\lambda$-weighted Rota-Baxter Lie coalgebra is a right Lie comodule of itself, which is called the adjoint corepresentation.
\end{example}
\begin{Prop}
	Let $(C,\Delta_C,R_C)$ be a $\lambda$-weighted Rota-Baxter Lie coalgebra. Then $(C^*,[\ ,\ ]_{C^*},R_{C}^*)$ is a $\lambda$-weighted Rota-Baxter Lie algebra.
\end{Prop}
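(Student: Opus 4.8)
The plan is to derive all three pieces of structure on $C^*$ by dualizing the coalgebra axioms, using crucially the standing finite-dimensionality assumption, which makes the canonical map
\[
\theta\colon C^*\otimes C^*\longrightarrow (C\otimes C)^*,\qquad \theta(\alpha\otimes\beta)(x\otimes y)=\alpha(x)\beta(y),
\]
an isomorphism (and likewise for triple tensor products). First I would define $[\ ,\ ]_{C^*}\colon C^*\otimes C^*\to C^*$ as the composite $\Delta_C^{*}\circ\theta$, so that $[\alpha,\beta]_{C^*}(x)=(\alpha\otimes\beta)(\Delta_C(x))$ for $\alpha,\beta\in C^*$ and $x\in C$. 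Since $\theta$ intertwines the flip $\tau$ on $C\otimes C$ with the flip on $C^*\otimes C^*$, transposing the co-skew-symmetry \eqref{1.1} immediately yields $[\alpha,\beta]_{C^*}=-[\beta,\alpha]_{C^*}$.

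Next I would establish the Jacobi identity by transposing the co-Jacobi identity \eqref{LC}. Unwinding the definitions through $\theta$ (now for $C^*\otimes C^*\otimes C^*\cong(C\otimes C\otimes C)^*$), one checks that the transpose of $(I\otimes\Delta_C)\Delta_C$ sends $\alpha\otimes\beta\otimes\gamma$ to $[\alpha,[\beta,\gamma]_{C^*}]_{C^*}$, the transpose of $(\Delta_C\otimes I)\Delta_C$ sends it to $[[\alpha,\beta]_{C^*},\gamma]_{C^*}$, and the transpose of $(I\otimes\tau)(\Delta_C\otimes I)\Delta_C$ sends it to $[[\alpha,\gamma]_{C^*},\beta]_{C^*}$. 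Hence \eqref{LC} transposes to
\[
[\alpha,[\beta,\gamma]_{C^*}]_{C^*}-[[\alpha,\beta]_{C^*},\gamma]_{C^*}+[[\alpha,\gamma]_{C^*},\beta]_{C^*}=0 ,
\]
which, upon invoking the skew-symmetry just proved, is precisely the Jacobi identity; thus $(C^*,[\ ,\ ]_{C^*})$ is a Lie algebra.

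Finally I would transpose the Rota-Baxter identity \eqref{R}. Through $\theta$, the transpose of $R_C\otimes R_C$ is $R_C^{*}\otimes R_C^{*}$, the transpose of $I\otimes R_C+R_C\otimes I+\lambda$ is $I\otimes R_C^{*}+R_C^{*}\otimes I+\lambda$, and the transpose of $\Delta_C R_C$ is $R_C^{*}\circ\Delta_C^{*}$. Therefore \eqref{R} transposes to
\[
[R_C^{*}\alpha,R_C^{*}\beta]_{C^*}=R_C^{*}\bigl([R_C^{*}\alpha,\beta]_{C^*}+[\alpha,R_C^{*}\beta]_{C^*}+\lambda[\alpha,\beta]_{C^*}\bigr)
\]
for all $\alpha,\beta\in C^*$, which is exactly the assertion that $R_C^{*}$ is a Rota-Baxter operator of weight $\lambda$ on $(C^*,[\ ,\ ]_{C^*})$. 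Since passing to transposes is faithful on finite-dimensional spaces, each identity on $C$ is equivalent to its transpose, so the computations above prove the proposition.

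I expect the only delicate point to be the bookkeeping in the second paragraph: correctly identifying the transposes of the three triple-tensor composites in \eqref{LC}, in particular tracking how the swap $I\otimes\tau$ permutes the three slots and whether it contributes a sign, and then matching the resulting three-term relation with the conventional cyclic form of the Jacobi identity via skew-symmetry. The skew-symmetry and Rota-Baxter steps are routine once $\theta$ is in place.
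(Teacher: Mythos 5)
Your proposal is correct and takes essentially the same route as the paper: the paper treats the Lie algebra structure on $C^*$ as well known and verifies only the Rota-Baxter identity, via exactly your transposition computation $[R_C^*(f),R_C^*(g)]_{C^*}=(f\otimes g)(R_C\otimes I+I\otimes R_C+\lambda)\Delta_C R_C=R_C^*\big([R_C^*(f),g]_{C^*}+[f,R_C^*(g)]_{C^*}+\lambda[f,g]_{C^*}\big)$. Your extra verification of antisymmetry and the Jacobi identity by dualizing \eqref{1.1} and \eqref{LC} (including the correct identification of the three transposed terms) simply fills in the step the paper cites without proof.
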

\begin{proof}
	It is well known that $C^*$ is a Lie algebra with the multiplication given by
\begin{align*}
		[f,g]_{C^*}=(f\otimes g)\Delta_C,~\forall~ f, g\in C^*.
	\end{align*} We only need to check that $R_{C}^*$ is a $\lambda$-weighted Rota-Baxter operator.
Define $ R_{C}^* (f)=f R_C,~\forall~ f\in C^*$. In view of (\ref{R}), for all $f, g\in C^*$,
we have
	\begin{align*}
		&[R_{C}^*(f),R_{C}^*(g)]_{C^*}\\
		=&[fR_{C},gR_{C}]_{C^*}\\
		=&(fR_{C}\otimes gR_{C})\Delta_C\\
		=&(f\otimes g)(R_C\otimes I+I\otimes R_C+\lambda)\Delta_CR_C\\
		=&(fR_C\otimes g)\Delta_CR_C+(f\otimes gR_C)\Delta_CR_C+\lambda (f\otimes g)\Delta_CR_C\\
		=&[fR_C,g]_{C^*}R_C+[f,gR_C]_{C^*}R_C+\lambda[f, g]_{C^*}R_C\\
		=&[R_{C}^*(f),g]_{C^*}R_C+[f,R_{C}^*(g)]_{C^*}R_C+\lambda[f, g]_{C^*}R_C\\
		=&R_{C}^*[R_{C}^*(f),g]_{C^*}+R_{C}^*[f,R_{C}^*(g)]_{C^*}+\lambda R_{C}^*[f, g]_{C^*}\\
		=&R_{C}^*\big([R_{C}^*(f),g]_{C^*}+[f,R_{C}^*(g)]_{C^*}+\lambda [f, g]_{C^*}\big).
	\end{align*}
This finishes the proof.
\end{proof}
\begin{Prop}\label{new}
	Let $(C,\Delta_C,R_C)$ be a $\lambda$-weighted Rota-Baxter Lie coalgebra and $(M,\rho,R_M)$ a right Lie comodule of $(C,\Delta_C,R_C)$.
Define two linear maps $\tilde{\Delta}_C:C\rightarrow C\otimes C$ and $\tilde{\rho}:M\rightarrow M\otimes C$ respectively by
	\begin{align}
		&\tilde{\Delta}_C=(I\otimes R_C+R_C\otimes I+\lambda )\Delta_C,\label{new1}\\
		&\tilde{\rho}=(I\otimes R_C)\rho-\rho R_M.\label{new2}
	\end{align}
	Then
	\begin{enumerate}[label=$(\roman*)$]
		\item $(C,\tilde{\Delta}_C,R_C)$ is a $\lambda$-weighted Rota-Baxter Lie coalgebra.\label{part1}
		\item $(M,\tilde{\rho},R_M)$ is a right Lie comodule of $(C,\tilde{\Delta}_C,R_C)$.\label{part2}
	\end{enumerate}
\end{Prop}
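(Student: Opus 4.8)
The plan is to verify, for each part, the defining identities directly on tensor powers of $C$ and $M$, using only the coproduct identities (\ref{LC}), (\ref{com}) and the weighted Rota-Baxter identities (\ref{R}), (\ref{rR}). For part \ref{part1} there are three things to check for $(C,\tilde\Delta_C,R_C)$: co-anticommutativity (\ref{1.1}), the co-Jacobi identity (\ref{LC}), and the Rota-Baxter identity (\ref{R}). Co-anticommutativity is immediate, since applying $\tau$ to $\tilde\Delta_C=(I\otimes R_C+R_C\otimes I+\lambda)\Delta_C$ interchanges the first two summands and, as $\tau\Delta_C=-\Delta_C$, reverses the overall sign. The Rota-Baxter identity for $\tilde\Delta_C$ is also short: one has $(R_C\otimes R_C)\tilde\Delta_C=(I\otimes R_C+R_C\otimes I+\lambda)(R_C\otimes R_C)\Delta_C$, and substituting $(R_C\otimes R_C)\Delta_C=\tilde\Delta_C R_C$ (which is exactly (\ref{R})) gives $(I\otimes R_C+R_C\otimes I+\lambda)\tilde\Delta_C R_C$, as required.

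The real content of part \ref{part1} — and the step I expect to be the main obstacle — is the co-Jacobi identity for $\tilde\Delta_C$. After substituting $\tilde\Delta_C=(I\otimes R_C+R_C\otimes I+\lambda)\Delta_C$, the expression $(I\otimes\tilde\Delta_C)\tilde\Delta_C-(\tilde\Delta_C\otimes I)\tilde\Delta_C+(I\otimes\tau)(\tilde\Delta_C\otimes I)\tilde\Delta_C$ expands into many summands, each a placement of one or two copies of $R_C$ (possibly scaled by $\lambda$ or $\lambda^2$) composed with a twice-iterated coproduct. I would rewrite the summands in which two copies of $R_C$ act on the two legs of a single coproduct using (\ref{R}), so that they match the one-$R_C$ summands in shape, and then collapse the resulting alternating sums using the co-Jacobi identity (\ref{LC}) for $\Delta_C$ and its variants obtained by applying $\tau$ in various slots; this is the delicate bookkeeping. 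A computation-free alternative is to dualize: by the preceding Proposition $(C^*,[\ ,\ ]_{C^*},R_C^*)$ is a weighted Rota-Baxter Lie algebra, for which the descendent bracket $[f,g]\mapsto[R_C^*f,g]_{C^*}+[f,R_C^*g]_{C^*}+\lambda[f,g]_{C^*}$ is again a Lie bracket with $R_C^*$ still a weighted Rota-Baxter operator; since $C$ is finite-dimensional and this descendent bracket is exactly the one dual to $\tilde\Delta_C$, part \ref{part1} follows by dualizing back.

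For part \ref{part2} one checks the compatibility (\ref{rR}) between $\tilde\rho$ and $R_M$ and the comodule co-Jacobi identity (\ref{com}) for $(M,\tilde\rho)$ over $(C,\tilde\Delta_C)$. The first is a short computation: via $\tilde\rho=(I\otimes R_C)\rho-\rho R_M$ and repeated use of (\ref{rR}), both sides of $(R_M\otimes R_C)\tilde\rho=(R_M\otimes I+I\otimes R_C+\lambda)\tilde\rho R_M$ reduce to $(R_M\otimes R_C+I\otimes R_C^2+\lambda\, I\otimes R_C)\rho R_M-(R_M\otimes I+I\otimes R_C+\lambda)\rho R_M^2$. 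The comodule co-Jacobi identity is the analogue of the co-Jacobi computation of part \ref{part1}: substitute the definitions of $\tilde\Delta_C$ and $\tilde\rho$, normalize the summands carrying two operators using (\ref{R}) and (\ref{rR}), and collapse the result with (\ref{com}) for $\rho$; this is again where the real work lies. Equivalently, dualizing $(M,\rho,R_M)$ produces a representation $(M^*,R_M^*)$ of $(C^*,[\ ,\ ]_{C^*},R_C^*)$, which by the standard construction becomes a representation of the descendent Lie algebra via $x\triangleright\xi=R_C^*(x)\cdot\xi-R_M^*(x\cdot\xi)$, compatibly with $R_M^*$; this descendent representation is exactly the one dual to $\tilde\rho$, so dualizing back yields part \ref{part2}.
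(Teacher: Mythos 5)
Your proposal is correct, and its primary route --- direct verification of (\ref{1.1}), (\ref{LC}), (\ref{R}) for $(C,\tilde{\Delta}_C,R_C)$ and of (\ref{com}), (\ref{rR}) for $(M,\tilde{\rho},R_M)$ --- is exactly the paper's approach; your two short computations (the Rota-Baxter identity for $\tilde{\Delta}_C$, and the reduction of both sides of (\ref{rR}) for $\tilde{\rho}$ to $(R_M\otimes R_C+I\otimes R_C^2+\lambda I\otimes R_C)\rho R_M-(R_M\otimes I+I\otimes R_C+\lambda)\rho R_M^2$) are the same as the paper's. One remark on the co-Jacobi bookkeeping you flag as the main obstacle: the summands that need (\ref{R}) are those containing $\Delta_CR_C$, and the identity is used in the direction $\tilde{\Delta}_CR_C=(R_C\otimes R_C)\Delta_C$, i.e.\ the opposite of the rewriting you describe; after this every summand is an $R_C$-decoration of an iterated $\Delta_C$, and the three iterated coproducts all carry one and the same decoration operator $I\otimes R_C\otimes R_C+R_C\otimes I\otimes R_C+R_C\otimes R_C\otimes I+\lambda(R_C\otimes I\otimes I+I\otimes R_C\otimes I+I\otimes I\otimes R_C)+\lambda^2$, which is symmetric in the three slots and so commutes with $I\otimes\tau$; hence the whole expression is this operator applied to the co-Jacobiator of $\Delta_C$. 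This common-factor structure is precisely how the paper concludes part \ref{part1} (its displayed operator contains some typographical slips, but that is the argument), and part \ref{part2} is dispatched there by ``the same procedure'', namely the analogous factorization through (\ref{com}) and (\ref{com}) precomposed with $R_M$, exactly as you outline. Your dualization alternative is a genuinely different route not taken in the paper: in finite dimension the assertions are equivalent to the standard facts that the descendent bracket of a weight-$\lambda$ Rota-Baxter Lie algebra is again a Lie bracket admitting the same weighted Rota-Baxter operator, and that $\xi\cdot_R f=\xi\cdot R_C^{*}(f)-R_M^{*}(\xi\cdot f)$ is a representation of the descendent algebra compatible with $R_M^{*}$; this buys a computation-free proof, at the cost of importing those facts (which the paper neither states nor proves) and of relying on the finite-dimensionality hypothesis, whereas the paper's direct factorization works verbatim in any dimension.
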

	\begin{proof}
		\begin{enumerate}[label=$(\roman*)$]
			\item It is obviously that (\ref{1.1}) holds. We only need to check Eqs.~(\ref{LC}) and (\ref{R}) hold
for $(\tilde{\Delta}_C,R_C)$. In the light of Eqs.~(\ref{new1}) and (\ref{LC}),
			\begin{align*}
				&(I\otimes\tilde{\Delta}_C)\tilde{\Delta}_C\\
				=&(I\otimes(I\otimes R_C+R_C\otimes I+\lambda )\Delta_C)(I\otimes R_C+R_C\otimes I+\lambda )\Delta_C\\
				=&(I\otimes R_C\otimes R_C+R_C\otimes I\otimes R_C+R_C\otimes R_C\otimes R_C+\lambda R_C\otimes I\otimes I\\&+\lambda I\otimes I\otimes R_C+I\otimes R_C\otimes I+\lambda )(I\otimes\Delta_C)\Delta_C,\\
				&(\tilde{\Delta}_C\otimes I)\tilde{\Delta}_C\\
				=&((I\otimes R_C+R_C\otimes I+\lambda )\Delta_C\otimes I)(I\otimes R_C+R_C\otimes I+\lambda )\Delta_C\\
				=&(I\otimes R_C\otimes R_C+R_C\otimes I\otimes R_C+R_C\otimes R_C\otimes I+\lambda I\otimes I\otimes R_C\\&+\lambda R_C\otimes I\otimes I+\lambda I\otimes R_C\otimes I+\lambda )(\Delta_C\otimes I)\Delta_C,\\
				&(I\otimes\tau)(\tilde{\Delta}_C\otimes I)\tilde{\Delta}_C\\
				=&(I\otimes\tau)(I\otimes R_C\otimes R_C+R_C\otimes I\otimes R_C+R_C\otimes R_C\otimes I+\lambda I\otimes I\otimes R_C\\&+\lambda R_C\otimes I\otimes I+\lambda I\otimes R_C\otimes I+\lambda )(\Delta_C\otimes I)\Delta_C\\
				=&(I\otimes R_C\otimes R_C+R_C\otimes I\otimes R_C+R_C\otimes R_C\otimes I+\lambda I\otimes I\otimes R_C\\&+\lambda R_C\otimes I\otimes I+\lambda I\otimes R_C\otimes I+\lambda )(I\otimes\tau)(\Delta_C\otimes I)\Delta_C,
			\end{align*}
		which indicate that
			\begin{align*}
				&(I\otimes\tilde{\Delta}_C)\tilde{\Delta}_C-(\tilde{\Delta}_C\otimes I)\tilde{\Delta}_C+(I\otimes\tau)(\tilde{\Delta}_C\otimes I)\tilde{\Delta}_C\\
				=&(I\otimes R_C\otimes R_C+R_C\otimes I\otimes R_C+R_C\otimes R_C\otimes I+\lambda I\otimes I\otimes R_C\\&+\lambda R_C\otimes I\otimes I+\lambda I\otimes R_C\otimes I+\lambda )\\&\times((I\otimes\Delta_C)\Delta_C-(\Delta_C\otimes I)\Delta_C+(I\otimes\tau)(\Delta_C\otimes I)\Delta_C)\\&=0.
			\end{align*}
			By Eqs.~(\ref{R}) and (\ref{new1}), we can directly calculate
			\begin{align*}
				&(R_C\otimes R_C)\tilde{\Delta}_C\\
				=&(R_C\otimes R_C)(I\otimes R_C+R_C\otimes I+\lambda )\Delta_C\\
				=&(I\otimes R_C+R_C\otimes I+\lambda )(R_C\otimes R_C)\Delta_C\\
				=&(I\otimes R_C+R_C\otimes I+\lambda )(I\otimes R_C+R_C\otimes I+\lambda )\Delta_CR_C\\
				=&(I\otimes R_C+R_C\otimes I+\lambda )\tilde{\Delta}_CR_C.
			\end{align*}
			Thus, $(C,\tilde{\Delta}_C,R_C)$ is a $\lambda$-weighted Rota-Baxter Lie coalgebra.
			\item Take the same procedure as the proof of~\ref{part1}.
		\end{enumerate}
	\end{proof}
\section{Cohomologies of $\lambda$-weighted Rota-Baxter Lie coalgebras}
In this section, we are devoted to studying cohomologies of $\lambda$-weighted Rota-Baxter Lie coalgebras.

In the following, we denote
\[a\otimes^{(1)}(b_1\otimes b_2\otimes b_3)=a\otimes b_1\otimes b_2\otimes b_3,\]
\[b_1\otimes a\otimes b_2\otimes b_3=a\otimes^{(2)}(b_1\otimes b_2\otimes b_3),\]
\[a\otimes^{(k)}(b_1\otimes\cdots\otimes b_k\otimes\cdots\otimes b_{n-1})=b_1\otimes\cdots\otimes a\otimes b_k\otimes\cdots\otimes b_{n-1},\]
\[b_1\otimes\cdots\otimes b_{n-1}\otimes a=a\otimes^{(n)}(b_1\otimes\cdots\otimes b_{n-1}),\]
\[(a_1\otimes a_2)\otimes^{(2)}(b_1\otimes b_2\otimes b_3)=b_1\otimes (a_1\otimes a_2)\otimes b_2\otimes b_3,\]
 for all $a,a_1,a_2,b_i\in C~(i=1,2\cdots)$.

At first, we recall cohomologies of Lie coalgebras studied in \cite{co7}.

Let $(C,\Delta_C)$ be a Lie coalgebra. Suppose that $(M,\rho)$ is a right Lie comodule of $(C,\Delta_C)$. Denote the set of $n$-cochains ($n\geq 0$)
 by $C^{n}(M,C)$, where
	\[C^{n}(M,C)=\mathrm{Hom}(M,\wedge^n C),~n\ge 0,\]
and the coboundary operator $\partial^n:C^n(M,C)\rightarrow C^{n+1}(M,C)$ is given by, for any $h\in C^n(M,C)$,
\begin{align}
	&\partial^0(h)=(h\otimes I)\rho,\\
	&\partial^n(h)=\frac{1}{2}\sum_{k=1}^{n}(-1)^k\mathrm{Alt}(\Delta_C\otimes^{(k)}I^{\otimes (n-1)})h+(-1)^{n-1}\mathrm{Alt}(h\otimes I)\rho,\ n\ge1,\label{coh1}
\end{align}
where $\mathrm{Alt}:\otimes^{n}C\longrightarrow \otimes^{n}C$ is given by
\[\mathrm{Alt}(c_1\otimes\cdots\otimes c_n)=\frac{1}{n!}\sum_{\sigma\in\mathfrak{S}_n}\mathrm{sgn}(\sigma)\psi_\sigma(c_1\otimes\cdots\otimes c_n),~\forall~c_i\in C.\]
 Denote the set of all n-cocycles and n-coboundaries respectively by $\mathrm{Z}^n(M,C)$ and $\mathrm{B}^n(M,C)$. Define
  $\mathrm{H}^n(M,C)=\mathrm{Z}^n(M,C)/\mathrm{B}^n(M,C),$
which is called the n-cohomology group of $(C,\Delta_C)$ with coefficients in $(M,\rho)$.

Moreover, let $(C,\Delta_C,R_C)$ be a $\lambda$-weighted Rota-Baxter Lie coalgebra and $(M,\rho,R_M)$ a right Lie comodule of it.
Proposition~\ref{new} indicates that $(M,\tilde{\rho},R_M)$ is a right Lie comodule of the $\lambda$-weighted Rota-Baxter
 Lie coalgebra $(C,\tilde{\Delta}_C,R_C)$. Consider the cohomology of $(C,\tilde{\Delta}_C)$ with
 coefficients in $(M,\tilde{\rho})$. Denote the set of $n$-cochains by
\[\tilde{C}^n(M,C)=\mathrm{Hom}(M,\wedge^n C),\]
and a coboundary map $\tilde{\partial}^n:\tilde{C}^n(M,C)\rightarrow \tilde{C}^{n+1}(M,C)$ given by, for any $h\in \tilde{C}^n(M,C)$,
\begin{align}
	\tilde{\partial}^0(h)=&(h\otimes I)\tilde{\rho}=(h\otimes R_C)\rho-(h\otimes I)\rho R_M,\\
	\tilde{\partial}^n(h)=&\frac{1}{2}\sum_{k=1}^{n}(-1)^k\mathrm{Alt}((I\otimes R_C+R_C\otimes I+\lambda)\Delta_C\otimes^{(k)}I^{\otimes (n-1)})h\notag\\
	&+(-1)^{n-1}\mathrm{Alt}(h\otimes R_C)\rho-(-1)^{n-1}\mathrm{Alt}(h\otimes I)\rho R_M,\ n\ge 1.\label{coh2}
\end{align}
Then $\{\tilde{C}^*(M,C),\tilde{\partial}^*\}$ is a cochain complex. The corresponding n-cohomology group is
\begin{align*}
	\mathrm{\tilde{H}}^n(M,C)=\mathrm{\tilde{Z}}^n(M,C)/\mathrm{\tilde{B}}^n(M,C),
\end{align*}
where $\mathrm{\tilde{Z}}^n(M,C)=\mathrm{Ker}(\tilde{\partial}^{n}),~\mathrm{\tilde{B}}^n(M,C)=\mathrm{Im}(\tilde{\partial}^{n-1}).$

Denote
\[R_C^{(i)_n}=\underbrace{(I\otimes\cdots\otimes R_C\otimes\cdots\otimes R_C\otimes\cdots\otimes I)}_{\hbox{where $R_C$ appears i times}},~\hbox{we sum up over all possible variants}.\]
\begin{Prop}\label{alt}
 For all $ c,c_1,c_2,c_i\in C$, we have
	\begin{enumerate}[label=$(\roman*)$]
		\item $\mathrm{Alt}(c)=c,~\mathrm{Alt}(c_1\otimes c_2)=\frac{1}{2}(c_1\otimes c_2-c_2\otimes c_1),~\mathrm{Alt}(\Delta_C)=\Delta_C.$\label{alt1}
		\item $\mathrm{Alt}\big((R_C\otimes R_C)(c_1\otimes c_2)\big)=(R_C\otimes R_C)\mathrm{Alt}(c_1\otimes c_2)=\frac{1}{2}((R_C\otimes R_C)(c_1\otimes c_2)-(R_C\otimes R_C)(c_2\otimes c_1))$.\label{alt2}
		\item $\mathrm{Alt}((R_C\otimes I+I\otimes R_C)(c_1\otimes c_2))=\frac{1}{2}\big((R_C\otimes I)(c_1\otimes c_2)-(I\otimes R_C)(c_2\otimes c_1)+(I\otimes R_C)(c_1\otimes c_2)-(R_C\otimes I)(c_2\otimes c_1)\big)=(R_C\otimes I+I\otimes R_C)\mathrm{Alt}(c_1\otimes c_2)$.\label{alt3}
		\item $\mathrm{Alt}(R_C^{\otimes n}(c_1\otimes\cdots\otimes c_n))=R_C^{\otimes n}\mathrm{Alt}(c_1\otimes\cdots\otimes c_n)$.\label{alt4}
		\item $\mathrm{Alt}(R^{(i)_n}_C(c_1\otimes\cdots\otimes c_n))=R^{(i)_n}_C\mathrm{Alt}(c_1\otimes\cdots\otimes c_n)$.\label{alt5}
	\end{enumerate}
\end{Prop}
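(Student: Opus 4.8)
The plan is to derive all five parts from a single observation about how the symmetrization operator $\mathrm{Alt}$ interacts with the $\mathfrak{S}_n$-action $\psi_\sigma$ on $\otimes^n C$.

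First, part \ref{alt1} is read off directly from the definition of $\mathrm{Alt}$: for $n=1$ the group $\mathfrak{S}_1$ is trivial, so $\mathrm{Alt}(c)=c$; and for $n=2$ we have $\mathfrak{S}_2=\{\mathrm{id},(12)\}$ with $\psi_{(12)}=\tau$, which gives $\mathrm{Alt}(c_1\otimes c_2)=\frac12(c_1\otimes c_2-c_2\otimes c_1)$. Applying this to $\Delta_C$ and using the anti-cocommutativity relation $\Delta_C=-\tau\Delta_C$ from (\ref{1.1}) yields $\mathrm{Alt}(\Delta_C)=\frac12(\Delta_C-\tau\Delta_C)=\Delta_C$.

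The heart of the argument is the following elementary fact: for any linear endomorphisms $A_1,\dots,A_n$ of $C$ and any $\sigma\in\mathfrak{S}_n$ one has $(A_1\otimes\cdots\otimes A_n)\psi_\sigma=\psi_\sigma(A_{\sigma(1)}\otimes\cdots\otimes A_{\sigma(n)})$ (up to replacing $\sigma$ by $\sigma^{-1}$, according to the convention fixed for $\psi_\sigma$), which is checked by evaluating both sides on a simple tensor $c_1\otimes\cdots\otimes c_n$. Now the operator $R^{(i)_n}_C$ is, by definition, the sum over all $\binom{n}{i}$ placements of $i$ copies of $R_C$ among the $n$ tensor slots; since relabelling the slots by $\sigma$ merely permutes this set of placements, the identity above shows that $R^{(i)_n}_C\psi_\sigma=\psi_\sigma R^{(i)_n}_C$ for every $\sigma\in\mathfrak{S}_n$. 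Averaging over $\mathfrak{S}_n$ against the sign character then gives $\mathrm{Alt}\circ R^{(i)_n}_C=R^{(i)_n}_C\circ\mathrm{Alt}$, which is part \ref{alt5}.

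Parts \ref{alt2}, \ref{alt3} and \ref{alt4} are then special cases. Part \ref{alt4} is $i=n$, where $R^{(n)_n}_C=R_C^{\otimes n}$ is a single summand already commuting with each $\psi_\sigma$; part \ref{alt2} is the case $n=2$, $i=2$; and part \ref{alt3} is the case $n=2$, $i=1$, noting that $R_C\otimes I+I\otimes R_C=R^{(1)_2}_C$. The explicit ``expanded'' middle expressions in \ref{alt2} and \ref{alt3} follow by substituting the $n=2$ formula for $\mathrm{Alt}$ from part \ref{alt1} into the right-hand sides. The only point requiring care is fixing the precise convention for $\psi_\sigma$ so that the commutation identity for $A_1\otimes\cdots\otimes A_n$ carries the permutation on the correct side; once this is settled the whole proposition is a routine bookkeeping exercise with no genuine obstacle.
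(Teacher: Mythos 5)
Your proposal is correct and follows essentially the same route as the paper: the paper's proof of part \ref{alt5} rests on exactly the same observation, namely that $\psi_\sigma$ leaves $R^{(i)_n}_C$ invariant (since permuting slots only permutes the set of placements of the $i$ copies of $R_C$), so that $\mathrm{Alt}$ commutes with $R^{(i)_n}_C$ after averaging against the sign character. The only cosmetic difference is organizational — the paper dismisses \ref{alt1}--\ref{alt4} as easy and proves \ref{alt5} directly, whereas you deduce \ref{alt2}--\ref{alt4} as special cases of \ref{alt5} — which changes nothing of substance.
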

\begin{proof}
	Items~\ref{alt1}-\ref{alt4} can be obtained easily.
	
	\ref{alt5} For any $c_i\in C~(i=1,2,3\cdots)$,
	due to $\psi_\sigma(R^{(i)_n}_C)=R^{(i)_n}_C$,
	\begin{align*}
		&\mathrm{Alt}(R^{(i)_n}_C(c_1\otimes\cdots\otimes c_n))\\
		=&\frac{1}{n!}\sum_\sigma \mathrm{sgn}(\sigma)\psi_\sigma(R^{(i)_n}_C(c_1\otimes\cdots\otimes c_n))\\
		=&\frac{1}{n!}\sum_\sigma \mathrm{sgn}(\sigma)\psi_\sigma(R^{(i)_n}_C)\psi_\sigma(c_1\otimes\cdots\otimes c_n)\\
		=&\frac{1}{n!}\sum_\sigma \mathrm{sgn}(\sigma)R^{(i)_n}_C\psi_\sigma(c_1\otimes\cdots\otimes c_n)\\
		=&R^{(i)_n}_C\mathrm{Alt}(c_1\otimes\cdots\otimes c_n).
	\end{align*}
\end{proof}
In the following, we characterize the relationship between the two cochain complexes
$\{C^*(M,C),\partial^*\}$ and $\{\tilde{C}^*(M,C),\tilde{\partial}^*\}$.

\begin{Prop}
	The collection of maps $\{\delta^n:C^n(M,C)\rightarrow \tilde{C}^n(M,C)\}_{n\ge 0}$ defined by, for any $h\in C^n(M,C)$,
	\begin{align}
		&\delta^0(h)=h,\\
		&\delta^n(h)=R_C^{\otimes n}h-\sum^{n-1}_{i=0 }\lambda^{n-i-1} R_C^{(i)_n}hR_M,\label{del}
	\end{align}
	is a homomorphism of cochain complexes from $\{C^*(M,C),\partial^*\}$ to $\{\tilde{C}^*(M,C),\tilde{\partial}^*\}$, that is,
	\begin{align*}
		\delta^{n+1}\partial^n=\tilde{\partial}^n\delta^n.
	\end{align*}
\end{Prop}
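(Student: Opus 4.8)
The plan is to verify $\delta^{n+1}\partial^n=\tilde\partial^n\delta^n$ directly, separately for each $n\ge 0$, by decomposing both coboundary maps into a ``$\Delta_C$-part'' and a ``$\rho$-part'' and checking that $\delta$ intertwines each part. Write $\partial^n=\partial^n_{\Delta}+\partial^n_{\rho}$ for the decomposition of (\ref{coh1}) into its sum over $k$ and its $\rho$-summand, and $\tilde\partial^n=\tilde\partial^n_{\Delta}+\tilde\partial^n_{\rho}$ for the analogous decomposition of (\ref{coh2}) ($\tilde\partial^n_{\rho}$ being its last two summands). The case $n=0$ is immediate: $\delta^1\partial^0(h)=R_C(h\otimes I)\rho-(h\otimes I)\rho R_M=(h\otimes R_C)\rho-(h\otimes I)\rho R_M=\tilde\partial^0(h)$, since $R_C(h\otimes I)=(h\otimes R_C)$. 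For $n\ge 1$ I first note that the sum in $\delta^{n+1}$ equals $\sum_{i=0}^{n}\lambda^{n-i}R_C^{(i)_{n+1}}=Q_{n+1}$, where $Q_m:=\sum_{i=0}^{m-1}\lambda^{m-1-i}R_C^{(i)_m}$ is viewed as an operator on $C^{\otimes m}$; thus $\delta^m(h)=R_C^{\otimes m}h-Q_mhR_M$. I also set $P_m:=R_C^{\otimes m}+\lambda Q_m=\sum_{i=0}^{m}\lambda^{m-i}R_C^{(i)_m}$, the commuting product over the $m$ tensor slots of ``$R_C$ on that slot, $+\lambda$'', so that $P_mh=R_C^{\otimes m}h+\lambda Q_mh$ tautologically.

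The whole computation then reduces to three elementary operator identities, each proved by expanding into monomials and invoking the defining relations. First, the Rota-Baxter relation (\ref{R}) gives
\[
R_C^{\otimes(n+1)}\bigl(\Delta_C\otimes^{(k)}I^{\otimes(n-1)}\bigr)=\bigl((I\otimes R_C+R_C\otimes I+\lambda)\Delta_C\otimes^{(k)}I^{\otimes(n-1)}\bigr)R_C^{\otimes n},
\]
obtained by applying $(R_C\otimes R_C)\Delta_C$ on the two slots created by $\Delta_C$ and $R_C$ on each of the other $n-1$ slots. Second, the same computation run with the weighted sums yields
\[
Q_{n+1}\bigl(\Delta_C\otimes^{(k)}I^{\otimes(n-1)}\bigr)=\bigl((I\otimes R_C+R_C\otimes I+\lambda)\Delta_C\otimes^{(k)}I^{\otimes(n-1)}\bigr)Q_n;
\]
for $\lambda\neq 0$ the cleanest route is to prove first the same identity with $P_\bullet$ in place of $Q_\bullet$ --- using $(R_C+\lambda)\otimes(R_C+\lambda)$ on the coproduct legs together with (\ref{R}) --- then subtract the first identity and divide by $\lambda$; the case $\lambda=0$ (where $Q_m=R_C^{(m-1)_m}$) is a short direct check. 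Third, splitting the defining sum of $Q_{n+1}$ according to whether $R_C$ is applied on the last tensor slot gives $Q_{n+1}(h\otimes I)=P_nh\otimes I+Q_nh\otimes R_C$.

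Granting these, the two parts are assembled as follows. For the $\Delta_C$-part, Proposition~\ref{alt} lets one pull $R_C^{\otimes(n+1)}$ and $Q_{n+1}$ through $\mathrm{Alt}$; then the first identity turns $R_C^{\otimes(n+1)}\partial^n_{\Delta}(h)$ into $\tilde\partial^n_{\Delta}(R_C^{\otimes n}h)$ and the second turns $Q_{n+1}\partial^n_{\Delta}(h)R_M$ into $\tilde\partial^n_{\Delta}(Q_nhR_M)$, whence $\delta^{n+1}\partial^n_{\Delta}(h)=\tilde\partial^n_{\Delta}(R_C^{\otimes n}h-Q_nhR_M)=\tilde\partial^n_{\Delta}(\delta^nh)$. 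For the $\rho$-part, the relations $R_C^{\otimes(n+1)}\mathrm{Alt}(h\otimes I)\rho=\mathrm{Alt}(R_C^{\otimes n}h\otimes R_C)\rho$ (by Proposition~\ref{alt}) and $Q_{n+1}(h\otimes I)=P_nh\otimes I+Q_nh\otimes R_C$ (the third identity) rewrite $\delta^{n+1}\partial^n_{\rho}(h)$ as $(-1)^{n-1}\mathrm{Alt}(R_C^{\otimes n}h\otimes R_C)\rho-(-1)^{n-1}\mathrm{Alt}(P_nh\otimes I)\rho R_M-(-1)^{n-1}\mathrm{Alt}(Q_nh\otimes R_C)\rho R_M$; on the other side one expands $\tilde\partial^n_{\rho}(\delta^nh)$, applies the comodule relation (\ref{rR}) to the term $\mathrm{Alt}(Q_nhR_M\otimes R_C)\rho$, observes that the two resulting copies of $\mathrm{Alt}(Q_nhR_M\otimes I)\rho R_M$ cancel, and the remainder coincides with the expression above by $P_nh=R_C^{\otimes n}h+\lambda Q_nh$. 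Summing the $\Delta_C$- and $\rho$-parts gives $\delta^{n+1}\partial^n=\tilde\partial^n\delta^n$.

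The step I expect to be the main obstacle is the second identity: one must keep precise track of how the operators $R_C$ redistribute over the $n+1$ slots produced by $\Delta_C$ acting on the $k$-th slot, and match all powers of $\lambda$ on both sides; the bookkeeping via $P_m=R_C^{\otimes m}+\lambda Q_m$ and the factorization $(R_C+\lambda)\otimes(R_C+\lambda)$ on the coproduct legs are precisely what make this transparent. The rest --- commuting $R_C$'s past $\mathrm{Alt}$ by Proposition~\ref{alt}, using (\ref{R}) and (\ref{rR}), and the sign cancellations --- is routine.
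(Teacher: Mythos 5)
Your proposal is correct, and its skeleton is the same as the paper's: a direct verification that splits $\partial^n$ (\ref{coh1}) and $\tilde{\partial}^n$ (\ref{coh2}) into the coproduct summands and the $\rho$-summands, commutes the $R_C$-type operators past $\mathrm{Alt}$ via Proposition~\ref{alt}, uses the Rota-Baxter identity (\ref{R}) for the coproduct part and the comodule identity (\ref{rR}) for the $\rho$-part, with the same final cancellation of the two $\mathrm{Alt}(Q_nhR_M\otimes I)\rho R_M$ terms. Where you differ is in how the $\lambda$-weighted bookkeeping is organized: the paper expands both sides into labelled blocks ($A_1,\dots,A_4$ versus $B_1,\dots,B_6$) and matches them term by term ($A_{31}+A_{32}+A_{33}=B_{21}$, $A_{34}=B_{22}$, $A_4=B_3+B_4+B_6$), asserting rather tersely how the $R_C$'s and powers of $\lambda$ redistribute over the slots, whereas you package the sums into the operators $Q_m$ and $P_m=(R_C+\lambda)^{\otimes m}$ and isolate three clean intertwining identities, notably $Q_{n+1}(\Delta_C\otimes^{(k)}I^{\otimes(n-1)})=((I\otimes R_C+R_C\otimes I+\lambda)\Delta_C\otimes^{(k)}I^{\otimes(n-1)})Q_n$ and $Q_{n+1}(h\otimes I)=P_nh\otimes I+Q_nh\otimes R_C$. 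Your identities are correct (I checked the slot-splitting and the weight count, and your $\rho$-part assembly reproduces exactly the paper's $A_4=B_3+B_4+B_6$ step), and this organization buys transparency in precisely the place where the paper's proof is hardest to audit, besides covering $n=1$ uniformly instead of as a separate case. One small remark: the divide-by-$\lambda$ derivation of the $Q$-identity, with its separate $\lambda=0$ check, can be avoided entirely, since the same subset-splitting argument you use for $Q_{n+1}(h\otimes I)$ proves the $Q$-identity directly for all $\lambda$; either way the argument is complete.
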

\begin{proof}
	Using Eqs.~(\ref{coh1}), (\ref{coh2}) and (\ref{del}), for all $m^*\in M^*$,
	\begin{align*}
			&\tilde{\partial}^{0}\delta^0(m^*)(m)-\delta^1\partial^0(m^*)(m)\\
			=&\tilde{\partial}^{0}(m^*)(m)-\delta^1((m^*\otimes I)\rho)(m)\\
			=&(m^*\otimes R_C)\rho(m)-(m^*\otimes I)\rho R_M(m)-R_C(m^*\otimes I)\rho(m)+(m^*\otimes I)\rho R_M(m)\\
			=&m^*(m_0)R_C(m_1)-m^*(m_0)R_C(m_1)\\=&0,
	\end{align*}
and by (\ref{alt}), for any $f\in C^1(M,C)$, we have
\begin{align*}
	&\tilde{\partial}^{1}\delta^1(f)-\delta^2\partial^1(f)\\
	=&\tilde{\partial}^{1}(R_Cf-fR_M)-\delta^2(-\frac{1}{2}\mathrm{Alt}\Delta_Cf+\mathrm{Alt}(f\otimes I)\rho)\\
	=&-\frac{1}{2}\mathrm{Alt}(I\otimes R_C+R_C\otimes I+\lambda)\Delta_C(R_Cf-fR_M)+\mathrm{Alt}((R_Cf-fR_M)\otimes R_C)\rho\\&-\mathrm{Alt}((R_Cf-fR_M)\otimes I)\rho R_M-(R_C\otimes R_C)(-\frac{1}{2}\mathrm{Alt}\Delta_Cf+\mathrm{Alt}(f\otimes I)\rho)\\&+(R_C\otimes I+I\otimes R_C+\lambda)(-\frac{1}{2}\mathrm{Alt}\Delta_Cf+\mathrm{Alt}(f\otimes I)\rho)R_M\\
	=&-\frac{1}{2}\mathrm{Alt}(R_C\otimes R_C)\Delta_Cf+\frac{1}{2}\mathrm{Alt}(I\otimes R_C+R_C\otimes I+\lambda)\Delta_CfR_M+\mathrm{Alt}(R_Cf\otimes R_C)\rho\\
	&-\mathrm{Alt}(fR_M\otimes R_C)\rho-\mathrm{Alt}(R_Cf\otimes I)\rho R_M+\mathrm{Alt}(fR_M\otimes I)\rho R_M\\
	&+\frac{1}{2}(R_C\otimes R_C)\mathrm{Alt}\Delta_Cf-(R_C\otimes R_C)\mathrm{Alt}(f\otimes I)\rho\\
	&-\frac{1}{2}(R_C\otimes I+I\otimes R_C+\lambda)\mathrm{Alt}\Delta_CfR_M+(R_C\otimes I+I\otimes R_C+\lambda)\mathrm{Alt}(f\otimes I)\rho R_M\\
	=&-\frac{1}{2}(R_C\otimes R_C)\mathrm{Alt}\Delta_Cf+\frac{1}{2}(I\otimes R_C+R_C\otimes I+\lambda)\mathrm{Alt}\Delta_CfR_M\\
	&+(R_C\otimes R_C)\mathrm{Alt}(f\otimes I)\rho-\mathrm{Alt}(I\otimes R_C+R_C\otimes I+\lambda)(f\otimes I)\rho R_M\\
	&+\frac{1}{2}(R_C\otimes R_C)\mathrm{Alt}\Delta_Cf-(R_C\otimes R_C)\mathrm{Alt}(f\otimes I)\rho\\
	&-\frac{1}{2}(R_C\otimes I+I\otimes R_C+\lambda)\mathrm{Alt}\Delta_CfR_M+(R_C\otimes I+I\otimes R_C+\lambda)\mathrm{Alt}(f\otimes I)\rho R_M\\&=0.
\end{align*}
	For each $h\in C^n(M,C)~(n\geq 2)$, according to Proposition~\ref{alt},
	\begin{align*}
		&\delta^{n+1}\partial^n(h)\\
		=&R_C^{\otimes (n+1)}\partial^nh-\sum^{n}_{i=0}\lambda^{n-i}R_C^{(i)_{n+1}}\partial^nhR_M\\
		=&R_C^{\otimes (n+1)}(\frac{1}{2}\sum_{k=1}^{n}(-1)^k\mathrm{Alt}(\Delta_C\otimes^{(k)}I^{\otimes (n-1)})h+(-1)^{n-1}\mathrm{Alt}(h\otimes I)\rho)\\
		&-\sum^{n}_{i=0}\lambda^{n-i}R_C^{(i)_{n+1}}(\frac{1}{2}\sum_{k=1}^{n}(-1)^k\mathrm{Alt}(\Delta_C\otimes^{(k)}I^{\otimes (n-1)})h+(-1)^{n-1}\mathrm{Alt}(h\otimes I)\rho)R_M\\
		=&\underset{A_1}{\underbrace{\frac{1}{2}\sum_{k=1}^{n}(-1)^k\mathrm{Alt}R_C^{\otimes (n+1)}(\Delta_C\otimes^{(k)}I^{\otimes (n-1)})h}}+\underset{A_2}{\underbrace{(-1)^{n-1}\mathrm{Alt}R_C^{\otimes (n+1)}(h\otimes I)\rho}}\\
		&\underset{A_3}{\underbrace{-\frac{1}{2}\sum_{k=1}^{n}\sum^{n}_{i=0}(-1)^k\lambda^{n-i}\mathrm{Alt}R_C^{(i)_{n+1}}(\Delta_C\otimes^{(k)}I^{\otimes (n-1)})hR_M}}\\&\underset{A_4}{\underbrace{-(-1)^{n-1}\sum^{n}_{i=0}\lambda^{n-i}\mathrm{Alt}R_C^{(i)_{n+1}}(h\otimes I)\rho R_M}},
	\end{align*}
	and
	\begin{align*}
		&\tilde{\partial}^n\delta^n(h)\\
		=&\frac{1}{2}\sum_{k=1}^{n}(-1)^k\mathrm{Alt}((I\otimes R_C+R_C\otimes I+\lambda)\Delta_C\otimes^{(k)}I^{\otimes (n-1)})\delta^nh\\
		&+(-1)^{n-1}\mathrm{Alt}(\delta^nh\otimes R_C)\rho-(-1)^{n-1}\mathrm{Alt}(\delta^nh\otimes I)\rho R_M\\
		=&\frac{1}{2}\sum_{k=1}^{n}(-1)^k\mathrm{Alt}((I\otimes R_C+R_C\otimes I+\lambda)\Delta_C\otimes^{(k)}I^{\otimes (n-1)})(R_C^{\otimes n}h-\sum^{n-1}_{i=0}\lambda^{n-i-1}R_C^{(i)_n}hR_M)\\
		&+(-1)^{n-1}\mathrm{Alt}((R_C^{\otimes n}h-\sum^{n-1}_{i=0}\lambda^{n-i-1}R_C^{(i)_n}hR_M)\otimes R_C)\rho\\&-(-1)^{n-1}\mathrm{Alt}((R_C^{\otimes n}h-\sum^{n-1}_{i=0}\lambda^{n-i-1}R_C^{(i)_n}hR_M)\otimes I)\rho R_M\\
		=&\underset{B_1}{\underbrace{\frac{1}{2}\sum_{k=1}^{n}(-1)^k\mathrm{Alt}R_C^{\otimes (n+1)}(\Delta_C\otimes^{(k)}I^{\otimes (n-1)})h}}\\
		&\underset{B_2}{\underbrace{-\frac{1}{2}\sum_{k=1}^{n}\sum^{n-1}_{i=0}(-1)^k\lambda^{n-i-1}\mathrm{Alt}((I\otimes R_C+R_C\otimes I+\lambda)\Delta_C\otimes^{(k)}I^{\otimes (n-1)})R_C^{(i)_n}hR_M}}\\
		&\underset{B_3}{\underbrace{-(-1)^{n-1}\sum^{n-1}_{i=0}\lambda^{n-i-1}\mathrm{Alt}(R_C^{(i)_n}hR_M\otimes R_C)\rho}}\\&\underset{B_4}{\underbrace{+(-1)^{n-1}\sum^{n-1}_{i=0}\lambda^{n-i-1}\mathrm{Alt}(R_C^{(i)_n}hR_M\otimes I)\rho R_M}}\\
		&+\underset{B_5}{\underbrace{(-1)^{n-1}\mathrm{Alt}(R_C^{\otimes n}h\otimes R_C)\rho}}\underset{B_6}{\underbrace{-(-1)^{n-1}\mathrm{Alt}(R_C^{\otimes n}h\otimes I)\rho R_M}}.
	\end{align*}
	We only need to check that
	\begin{align*}
		A_1+A_2+A_3+A_4=B_1+B_2+B_3+B_4+B_5+B_6.
	\end{align*}
	Since $A_1=B_1$ and $A_2=B_5$, we only need to prove respectively
	\begin{align*}
		A_3&=B_2,\\
		A_4&=B_3+B_4+B_6.
	\end{align*}
Indeed, using (\ref{R}), we can calculate directly
	\begin{align*}
		A_3=&-\frac{1}{2}\sum_{k=1}^{n}\sum^{n-1}_{i=0}(-1)^k\lambda^{n-i}\mathrm{Alt}((I\otimes I)\otimes^{(k)}R_C^{(i)_{n-1}})(\Delta_C\otimes^{(k)}I^{\otimes (n-1)})hR_M\\
		&-\frac{1}{2}\sum_{k=1}^{n}\sum^{n-1}_{i=0}(-1)^k\lambda^{n-i-1}\mathrm{Alt}((R_C\otimes I)\otimes^{(k)}R_C^{(i)_{n-1}})(\Delta_C\otimes^{(k)}I^{\otimes (n-1)})hR_M\\
		&-\frac{1}{2}\sum_{k=1}^{n}\sum^{n-1}_{i=0}(-1)^k\lambda^{n-i-1}\mathrm{Alt}((I\otimes R_C)\otimes^{(k)}R_C^{(i)_{n-1}})(\Delta_C\otimes^{(k)}I^{\otimes (n-1)})hR_M\\
		&-\frac{1}{2}\sum_{k=1}^{n}\sum^{n-2}_{i=0}(-1)^k\lambda^{n-i-2}\mathrm{Alt}((R_C\otimes R_C)\otimes^{(k)}R_C^{(i)_{n-1}})(\Delta_C\otimes^{(k)}I^{\otimes (n-1)})hR_M\\
		=&\underset{A_{31}}{\underbrace{-\frac{1}{2}\sum_{k=1}^{n}\sum^{n-1}_{i=0}(-1)^k\lambda^{n-i}\mathrm{Alt}((I\otimes I)\otimes^{(k)}R_C^{(i)_{n-1}})(\Delta_C\otimes^{(k)}I^{\otimes (n-1)})hR_M}}\\
		&\underset{A_{32}}{\underbrace{-\frac{1}{2}\sum_{k=1}^{n}\sum^{n-1}_{i=0}(-1)^k\lambda^{n-i-1}\mathrm{Alt}((R_C\otimes I)\otimes^{(k)}R_C^{(i)_{n-1}})(\Delta_C\otimes^{(k)}I^{\otimes (n-1)})hR_M}}\\
		&\underset{A_{33}}{\underbrace{-\frac{1}{2}\sum_{k=1}^{n}\sum^{n-1}_{i=0}(-1)^k\lambda^{n-i-1}\mathrm{Alt}((I\otimes R_C)\otimes^{(k)}R_C^{(i)_{n-1}})(\Delta_C\otimes^{(k)}I^{\otimes (n-1)})hR_M}}\\
		&\underset{A_{34}}{\underbrace{-\frac{1}{2}\sum_{k=1}^{n}\sum^{n-2}_{i=0}(-1)^k\lambda^{n-i-2}\mathrm{Alt}((R_C\otimes I+I\otimes R_C+\lambda)\Delta_C\otimes^{(k)}I^{\otimes (n-1)})(R_C\otimes^{(k)} R_C^{(i)_{n-1}})hR_M}},
	\end{align*}
	and
	\begin{align*}
		B_2=&-\frac{1}{2}\sum_{k=1}^{n}\sum^{n-1}_{i=0}(-1)^k\lambda^{n-i-1}\mathrm{Alt}((I\otimes R_C+R_C\otimes I+\lambda)\Delta_C\otimes^{(k)}I^{\otimes (n-1)})R_C^{(i)_n}hR_M\\
		=&\underset{B_{21}}{\underbrace{-\frac{1}{2}\sum_{k=1}^{n}\sum^{n-1}_{i=0}(-1)^k\lambda^{n-i-1}\mathrm{Alt}((I\otimes R_C+R_C\otimes I+\lambda)\Delta_C\otimes^{(k)}I^{\otimes (n-1)})(I\otimes^{(k)} R_C^{(i)_n})hR_M}}\\
		&\underset{B_{22}}{\underbrace{-\frac{1}{2}\sum_{k=1}^{n}\sum^{n-2}_{i=0}(-1)^k\lambda^{n-i-2}\mathrm{Alt}((I\otimes R_C+R_C\otimes I+\lambda)\Delta_C\otimes^{(k)}I^{\otimes (n-1)})(R_C\otimes^{(k)} R_C^{(i)_n})hR_M}}.
	\end{align*}
	Thus, $A_{31}+A_{32}+A_{33}=B_{21}$ and $A_{34}=B_{22}$, that is, $A_3=B_2$.
	\item According to (\ref{rR}),
	\begin{align*}
		A_4=&-(-1)^{n-1}\sum^{n}_{i=0}\lambda^{n-i}\mathrm{Alt}R_C^{(i)_{n+1}}(h\otimes I)\rho R_M\\
		=&-(-1)^{n-1}\sum^{n}_{i=0}\lambda^{n-i}\mathrm{Alt}(R_C^{(i)_n}h\otimes I)\rho R_M
		-(-1)^{n-1}\sum^{n-1}_{i=0}\lambda^{n-i-1}\mathrm{Alt}(R_C^{(i)_n}h\otimes R_C)\rho R_M\\
		=&-(-1)^{n-1}\mathrm{Alt}(R_C^{\otimes n}h\otimes I)\rho R_M-(-1)^{n-1}\sum^{n-1}_{i=0}\lambda^{n-i-1}\lambda \mathrm{Alt}(R_C^{(i)_n}h\otimes I)\rho R_M\\
		&-(-1)^{n-1}\sum^{n-1}_{i=0}\lambda^{n-i-1}\mathrm{Alt}(R_C^{(i)_n}h\otimes R_C)\rho R_M\\
		=&B_6-(-1)^{n-1}\sum^{n-1}_{i=0}\lambda^{n-i-1}\lambda \mathrm{Alt}(R_C^{(i)_n}h\otimes I)\rho R_M
		\\&-(-1)^{n-1}\sum^{n-1}_{i=0}\lambda^{n-i-1}\mathrm{Alt}(R_C^{(i)_n}h\otimes R_C)\rho R_M,
	\end{align*}
	and
	\begin{align*}
		B_3=&-(-1)^{n-1}\sum^{n-1}_{i=0}\lambda^{n-i-1}\mathrm{Alt}(R_C^{(i)_n}h\otimes I)(R_M\otimes R_C)\rho\\
		=&-(-1)^{n-1}\sum^{n-1}_{i=0}\lambda^{n-i-1}\mathrm{Alt}(R_C^{(i)_n}h\otimes I)(R_M\otimes I+I\otimes R_C+\lambda)\rho R_M\\
		=&-B_4+A_4-B_6.
	\end{align*}
	Therefore, $A_4=B_3+B_4+B_6$. The proof is completed.
\end{proof}
Let \begin{equation*}
	C^n_{RB}(M,C)=\left\{
	\begin{aligned}
		&C^0(M,C)=M^*,&n=0,\\
		&C^1(M,C)\oplus\tilde{M}^*,&n=1,\\
		&C^n(M,C)\oplus \tilde{C}^{n-1}(M,C),&n\ge 1.
	\end{aligned}
	\right.
\end{equation*}
Define a linear map $\partial^n_{RB}:C^n_{RB}(M,C)\rightarrow C^{n+1}_{RB}(M,C)$ by
\begin{align*}
	&\partial_{RB}^0(m^*)=(\partial^0(m^*),-\frac{1}{2}\delta^0(m^*)),~~\forall~m^*\in M^*,\\
	&\partial_{RB}^n(f,g)=(\partial^n(f),-\tilde{\partial}^{n-1}(g)-\frac{1}{2}\delta^n(f)),~~\forall~f\in C^n(M,C),~g\in \tilde{C}^{n-1}(M,C).
\end{align*}

\begin{Prop} $\{C^*_{RB}(M,C),\partial^*_{RB}\}$ is a cochain complex , that is,  $$\partial^{n+1}_{RB}\partial^n_{RB}=0~(n\ge 0).$$
\end{Prop}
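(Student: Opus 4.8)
The plan is to reduce the identity $\partial^{n+1}_{RB}\partial^n_{RB}=0$ to three facts already established in the excerpt: that $\{C^*(M,C),\partial^*\}$ is a cochain complex (recalled from \cite{co7}), that $\{\tilde C^*(M,C),\tilde\partial^*\}$ is a cochain complex (a consequence of Proposition~\ref{new}), and that $\delta^\bullet$ is a morphism of cochain complexes, i.e. $\delta^{n+1}\partial^n=\tilde\partial^n\delta^n$ for all $n$ (the preceding Proposition).

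First I would unwind the composition on a general element $(f,g)\in C^n_{RB}(M,C)$ with $f\in C^n(M,C)$ and $g\in \tilde C^{n-1}(M,C)$. By definition $\partial^n_{RB}(f,g)=\bigl(\partial^n f,\,-\tilde\partial^{n-1}g-\tfrac12\delta^n f\bigr)$, whose second component lies in $\tilde C^n(M,C)$, so $\partial^{n+1}_{RB}$ applies and gives
\begin{align*}
\partial^{n+1}_{RB}\partial^n_{RB}(f,g)=\Bigl(\partial^{n+1}\partial^n f,\;\tilde\partial^n\tilde\partial^{n-1}g+\tfrac12\tilde\partial^n\delta^n f-\tfrac12\delta^{n+1}\partial^n f\Bigr).
\end{align*}
The first component vanishes because $\partial^\bullet$ squares to zero; in the second, $\tilde\partial^n\tilde\partial^{n-1}g$ vanishes because $\tilde\partial^\bullet$ squares to zero, and the two remaining terms cancel by $\tilde\partial^n\delta^n=\delta^{n+1}\partial^n$. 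The low-degree cases $n=0,1$ need a separate look only because the formula for $\partial^0_{RB}$ and the summand $\tilde M^*$ of $C^1_{RB}(M,C)$ are slightly special (here $\delta^0=\mathrm{id}$ and $\tilde M^*=\tilde C^0(M,C)$); for $n=0$ one gets $\partial^1_{RB}\partial^0_{RB}(m^*)=\bigl(\partial^1\partial^0 m^*,\,\tfrac12\tilde\partial^0\delta^0 m^*-\tfrac12\delta^1\partial^0 m^*\bigr)=0$ for exactly the same reasons, and $n=1$ is the display above read with $g\in\tilde C^0(M,C)$.

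I do not expect a genuine obstacle here: the statement is a formal consequence of the three inputs, and the only care needed is bookkeeping --- the degree shift (the $\tilde C$-component at level $n$ is hit by $\tilde\partial^{n-1}$), the coefficient $-\tfrac12$, and the signs. The single point worth double-checking is that this coefficient $-\tfrac12$ is chosen precisely so that $\tfrac12\tilde\partial^n\delta^n f$ and $-\tfrac12\delta^{n+1}\partial^n f$ cancel rather than merely add; this is automatic from the chain-map identity but is the place where an incorrect normalization of the twisted differential would break the construction.
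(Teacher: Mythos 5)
Your proposal is correct and is essentially the paper's own argument: the paper's proof simply writes out $\partial^{n+1}_{RB}\partial^n_{RB}$ in the cases $n=0$, $n=1$, $n\ge 2$ and the vanishing rests on exactly your three inputs, namely $\partial^{n+1}\partial^n=0$, $\tilde\partial^{n}\tilde\partial^{n-1}=0$, and the chain-map identity $\delta^{n+1}\partial^n=\tilde\partial^n\delta^n$ from the preceding Proposition. Your bookkeeping of the degree shift, the $-\tfrac12$ coefficient and the low-degree cases matches the paper's computation.
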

\begin{proof}
	\begin{enumerate}[label=$(\roman*)$]
		\item When $n=0$, for any $m^*\in M^*$,
		\[\partial^1_{RB}\partial^0_{RB}(m^*)(m)
		=\big(\partial^1(\partial^0(m^*)),-\tilde{\partial}^{0}(-\frac{1}{2}\delta^0(m^*))-\frac{1}{2}\delta^1(\partial^0(m^*))\big)=0.\]
		\item When $n=1$, for any $f\in C^1(M,C)$,
		\[\partial^2_{RB}\partial^1_{RB}(f,m^*)
		=(\partial^2(\partial^1(f)),-\tilde{\partial}^{1}(-\tilde{\partial}^0(m^*)-\frac{1}{2}\delta^1(f))-\frac{1}{2}\delta^2(\partial^1(f)))=0.\]
		\item When $n\ge 2$, for all $f\in C^n(M,C),~g\in C^{n-1}(M,C)$,	\[\partial^{n+1}_{RB}\partial^n_{RB}(f,g)=(\partial^{n+1}\partial^n(f),-\tilde{\partial}^{n}(-\tilde{\partial}^{n-1}(g)-\frac{1}{2}\delta^n(f))-\frac{1}{2}\delta^{n+1}\partial^n(f))=0.\]
	\end{enumerate}
\end{proof}

\begin{Def} The cohomology group of the cochain complex $\{C^*_{RB}(M,C),\partial^*_{RB}\}$ is called the cohomology group of the $\lambda$-weighted Rota-Baxter Lie coalgebra $(C,\Delta_{C},R_{C})$ with coefficients in $(M,\rho,R_M)$. Denote it by \begin{align*}
	\mathrm{H}^n_{RB}(M,C)=\mathrm{Z}^n_{RB}(M,C)/\mathrm{B}^n_{RB}(M,C),
\end{align*}
where
$\mathrm{Z}^n_{RB}(M,C)=\mathrm{ker}(\partial^{n}_{RB}),~
\mathrm{B}^n_{RB}(M,C)=\mathrm{Im}(\partial^{n-1}_{RB}).$
\end{Def}

In the last section, we will need a certain subcomplex of the cochain complex $\{C^*_{RB}(M,C),\partial^*_{RB}\}$ given by
\begin{equation*}
	\bar{C}^n_{RB}(M,C)=\left\{
	\begin{aligned}
		&C^0(M,C)=M^*,&n=0,\\
		&C^1(M,C),&n=1,\\
		&C^n(M,C)\oplus \tilde{C}^{n-1}(M,C),&n\ge 2,
	\end{aligned}
	\right.
\end{equation*}
and $\bar{\partial}^n_{RB}=\partial^n_{RB}|_{\bar{C}^n_{RB}(M,C)}$. The corresponding n-cohomology group is denoted by
$\mathrm{\bar{H}}^n_{RB}(M,C)=\mathrm{\bar{Z}}^n_{RB}(M,C)/\mathrm{\bar{B}}^n_{RB}(M,C)$, which is called the reduced cohomology group
of $(C,\Delta_{C},R_{C})$ with coefficients in $(M,\rho,R_M)$. Obviously, $\mathrm{\bar{H}}^n_{RB}(M,C)=\mathrm{H}^n_{RB}(M,C)$ when $n\ge 3$.

By direct computations,
\begin{enumerate}[label=$(\roman*)$]
	\item $(\mathrm{\bar{H}}^1_{RB})$.
	\begin{align}
		&\mathrm{\bar{B}}^1_{RB}(M,C)=\{\big((m^*\otimes I)\rho,-\frac{1}{2}m^*\big)|m^*\in M^*\},\notag\\&\mathrm{\bar{Z}}^1_{RB}(M,C)=\left\{f\in C^1_{RB}(M,C)\left|\begin{aligned}&(f\otimes I)\rho-(I\otimes f)\tau\rho=\Delta_Cf,\\&fR_M=R_Cf\end{aligned}\right.\right\}.\label{Ccy1}
	\end{align}
	\item $(\mathrm{\bar{H}}^2_{RB})$.
	\begin{align}
		\bar{B}^2_{RB}(M,C)=&\left\{(\mu,\nu)\left|\begin{aligned}&\mu=\frac{1}{2}(f\otimes I)\rho-\frac{1}{2}(I\otimes f)\tau\rho-\frac{1}{2}\Delta_Cf,\\&\nu=-\frac{1}{2}R_Cf+\frac{1}{2}fR_M,f\in \bar{C}^1_{RB}(M,C)\end{aligned}\right.\right\}\label{B2}.
	\end{align}
Since
\begin{align}
&\partial^2(f)\notag\\=&\frac{1}{2}\mathrm{Alt}(I\otimes\Delta_C)h-\frac{1}{2}\mathrm{Alt}(\Delta_C\otimes I)h-\mathrm{Alt}(f\otimes I)\rho,\notag\\
=&(I\otimes\Delta)h+(I\otimes\tau)(\Delta\otimes I)h+(I\otimes\tau)(h\otimes I)\rho-(I\otimes h)\tau\rho \notag
\\&-(\Delta\otimes I)h-(h\otimes I)\rho\notag\\=&0,\label{Z21}\\
&-\tilde{\partial}^1(g)-\frac{1}{2}\delta^2(f)\notag\\
=&\frac{1}{2}\mathrm{Alt}(I\otimes R_C+R_C\otimes I+\lambda)\Delta_C g-\mathrm{Alt}(g\otimes R_C)\rho+\mathrm{Alt}(g\otimes I)\rho R_M\notag\\
&-\frac{1}{2}(R_C\otimes R_C)f+\frac{1}{2}(I\otimes R_C+R_C\otimes I+\lambda)fR_M\notag\\
=&(g\otimes I)\rho R_M-\tau(g\otimes I)\rho R_M+(I\otimes R_C+R_C\otimes I+\lambda)\Delta_C g\notag\\&-(g\otimes R_C)\rho+\tau(g\otimes R_C)\rho-(R_C\otimes R_C)f+(I\otimes R_C+R_C\otimes I+\lambda)fR_M\notag\\=&0.\label{Z22}
\end{align}
Thus,\[\mathrm{\bar{Z}}^2_{RB}(M,C)=\{(f,g)\in \bar{C}^2_{RB}(M,C)|(f,g)~satisfies~Eqs.~(\ref{Z21})-(\ref{Z22})\}.\]
\end{enumerate}

\begin{Theo}
	We have the following long exact sequence of cohomology groups
	\[\xymatrix@1{\cdots\ar[r]&\mathrm{\tilde{H}}^{n-1}(M,C)\ar[r]^-{[i]}&\mathrm{H}^{n}_{RB}(M,C)\ar[r]^-{[p]}&\mathrm{H}^n(M,C)\ar[r]^-{[-\frac{1}{2}\delta^n]}&\mathrm{\tilde{H}}^{n}(M,C)\ar[r]&\cdots}\]
	where \[[g]\in\mathrm{\tilde{H}}^{n-1}(M,C),~~[i][g]=[i(g)]=[(0,g)],\] \[[(f,g)]\in \mathrm{H}^{n}_{RB}(M,C),~~[p][(f,g)]=[p(f,g)]=[f],\]
	\[f\in \mathrm{H}^n(M,C),~~[-\frac{1}{2}\delta^n][f]=[-\frac{1}{2}\delta^n(f)].\]
\end{Theo}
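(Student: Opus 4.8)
The plan is to recognize that the maps $\delta^n : C^n(M,C) \to \tilde C^n(M,C)$, which the previous proposition established form a morphism of cochain complexes, assemble (together with the obvious inclusion $i$ and projection $p$) into a short exact sequence of cochain complexes, and then invoke the standard zig-zag lemma. Concretely, for each $n$ I would write down
\[
0 \longrightarrow \tilde C^{n-1}(M,C) \xrightarrow{\ i\ } \bar C^n_{RB}(M,C) \xrightarrow{\ p\ } C^n(M,C) \longrightarrow 0,
\]
where $i(g) = (0,g)$ and $p(f,g) = f$ (with the degree-shift conventions already fixed in the definition of $C^n_{RB}$, so that the $\tilde C$-summand in degree $n$ is $\tilde C^{n-1}$). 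This sequence is split-exact in each degree by construction, since $\bar C^n_{RB}(M,C) = C^n(M,C) \oplus \tilde C^{n-1}(M,C)$ as vector spaces; injectivity of $i$, surjectivity of $p$, and $\operatorname{Im} i = \operatorname{Ker} p$ are immediate.

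Next I would check that $i$ and $p$ are chain maps, i.e. that they intertwine $\partial_{RB}$, $\partial$, and $\tilde\partial$ appropriately. For $p$ this is trivial from $\partial^n_{RB}(f,g) = (\partial^n(f), -\tilde\partial^{n-1}(g) - \tfrac12\delta^n(f))$: projecting onto the first factor gives $\partial^n \circ p$. For $i$, one has $\partial^n_{RB}(0,g) = (0, -\tilde\partial^{n-1}(g))$, so $i$ is a chain map into the complex $\{\bar C^*_{RB},\bar\partial^*_{RB}\}$ up to the sign on $\tilde\partial$; since an overall sign on the differential of a complex does not change its cohomology, this causes no trouble (alternatively one absorbs the sign into a reindexed isomorphism). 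Having a short exact sequence of complexes, the long exact sequence in cohomology follows formally, and it reads
\[
\cdots \to \mathrm{\tilde H}^{n-1}(M,C) \xrightarrow{[i]} \mathrm{H}^n_{RB}(M,C) \xrightarrow{[p]} \mathrm{H}^n(M,C) \xrightarrow{\ \partial^*\ } \mathrm{\tilde H}^n(M,C) \to \cdots .
\]

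The only real content beyond formalities is the identification of the connecting homomorphism $\partial^*$ with $[-\tfrac12\delta^n]$. Here I would run the usual snake-lemma diagram chase: given a cocycle $f \in \mathrm Z^n(M,C)$, lift it to $(f,0) \in \bar C^n_{RB}(M,C)$, apply $\partial^n_{RB}$ to get $(\partial^n f, -\tfrac12\delta^n(f)) = (0, -\tfrac12\delta^n(f))$ since $f$ is a cocycle, and then pull back along $i$ to obtain the class $[-\tfrac12\delta^n(f)] \in \mathrm{\tilde H}^n(M,C)$. That $-\tfrac12\delta^n(f)$ is indeed a $\tilde\partial$-cocycle follows from $\tilde\partial^n \circ \delta^n = \delta^{n+1}\circ \partial^n$ (the previous proposition) applied to the cocycle $f$, together with $\partial^{n+1}\partial^n = 0$ in the low-degree cases handled separately. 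I expect the main obstacle — really a bookkeeping nuisance rather than a genuine difficulty — to be the low-degree edge of the sequence: the summands in degrees $0$, $1$, and $2$ of $C^*_{RB}$ are defined by hand (with $M^*$, $\tilde M^*$, and the split between $C^1_{RB}$ and $\bar C^1_{RB}$), so one must verify that $i$ and $p$ still form a short exact sequence there and that the connecting map formula $[-\tfrac12\delta^n]$ survives, using the explicit descriptions of $\bar Z^1_{RB}$ and $\bar Z^2_{RB}$ recorded just above the theorem. Once these boundary cases are matched up with the general pattern, the zig-zag lemma delivers the stated long exact sequence.
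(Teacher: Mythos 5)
Your approach is the right one, and it is in effect the intended argument: the paper states this theorem without any proof, and the expected justification is exactly the mapping-cone/zig-zag argument you outline. Viewing $\{C^*_{RB},\partial^*_{RB}\}$ as the cone of the chain map $-\frac12\delta$, the maps $i(g)=(0,g)$ and $p(f,g)=f$ give a degreewise split short exact sequence of complexes; the sign in $\partial^n_{RB}(0,g)=(0,-\tilde\partial^{n-1}(g))$ is harmless, since replacing a differential by its negative changes neither kernels nor images; and the snake-lemma chase (lift a cocycle $f$ to $(f,0)$, apply $\partial^n_{RB}$ to get $(0,-\frac12\delta^n(f))$, pull back along $i$) identifies the connecting homomorphism as $[-\frac12\delta^n]$, with $\tilde\partial^n\delta^n=\delta^{n+1}\partial^n$ guaranteeing that $-\frac12\delta^n(f)$ is a $\tilde\partial$-cocycle. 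All of this is correct.

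One correction is needed: the short exact sequence must be written with the full complex $C^n_{RB}(M,C)$, not the reduced complex $\bar C^n_{RB}(M,C)$. The theorem concerns $\mathrm{H}^n_{RB}$, the cohomology of $\{C^*_{RB},\partial^*_{RB}\}$, and for that complex one has $C^n_{RB}=C^n(M,C)\oplus\tilde C^{n-1}(M,C)$ for every $n\ge 1$ (the degree-one summand $\tilde M^*$ is just $\tilde C^0(M,C)=M^*$), together with $\tilde C^{-1}=0$ in degree $0$; hence $0\to\tilde C^{n-1}(M,C)\to C^n_{RB}(M,C)\to C^n(M,C)\to 0$ is exact in every degree and no special low-degree bookkeeping is required. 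With $\bar C^*_{RB}$, by contrast, exactness genuinely fails at $n=1$, since $\bar C^1_{RB}=C^1(M,C)$ has no $\tilde C^0$ summand, and the resulting long exact sequence would involve the reduced groups $\bar{\mathrm{H}}^n_{RB}$ rather than $\mathrm{H}^n_{RB}$ in low degrees. Once you replace $\bar C^*_{RB}$ by $C^*_{RB}$ throughout, your proof is complete as written.
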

\section{Non-abelian extensions of $\lambda$-weighted Rota-Baxter Lie coalgebras}
In this section, we investigate non-abelian extensions of $\lambda$-weighted Rota-Baxter Lie coalgebras,
define the non-abelian second cohomology groups, and verify that the non-abelian extensions can be classified
by the second non-abelian cohomology groups.
\begin{Def}\label{Def:NAE}
	Let $(C,\Delta_C,R_C)$ and $(M,\Delta_M,R_M)$ be two $\lambda$-weighted Rota-Baxter Lie coalgebras.
 A non-abelian extension of $(C,\Delta_C,R_C)$ by $(M,\Delta_M,R_M)$ is a
 $\lambda$-weighted Rota-Baxter Lie coalgebra $(E,\Delta_E,R_E)$, which fits into a
short exact sequence of $\lambda$-weighted Rota-Baxter Lie coalgebras
	\begin{align}\label{extensible}
		\xymatrix@C=20pt{\mathcal{E}:0\ar[r]&C\ar[r]^-f&E\ar[r]^-g&M\ar[r]&0.}
	\end{align}
When $(M,\Delta_M,R_M)$ is an abelian $\lambda$-weighted Lie coalgebra, the $\mathcal{E}$ is called an abelian extension of $(C,\Delta_C,R_C)$ by $(M,\Delta_M,R_M)$.
Denote an extension as above simply by $(E,\Delta_E,R_E)$ or $\mathcal{E}$.
\end{Def}

A retraction of a non-abelian extension $(E,\Delta_E,R_E)$ of $(C,\Delta_C,R_C)$ by $(M,\Delta_M,R_M)$
 is a linear map $t:E\rightarrow C$ such that $tf = I_{C}$.

\begin{Def}\label{Equ}
	Let $(E_1,\Delta_{E_1},R_{E_1})$ and $(E_2,\Delta_{E_2},R_{E_2})$ be two non-abelian extensions
of $(C,\Delta_C,R_C)$ by $(M,\Delta_M,R_M)$. They are said to be equivalent if there is an isomomorphism $\theta:E_1\rightarrow E_2$
of $\lambda$-weighted Rota-Baxter Lie coalgebras such that the following commutative diagram holds:
	\begin{align}\label{d1} \xymatrix@C=20pt@R=20pt{0\ar[r]&C\ar@{=}[d]\ar[r]^{f_1}&E_1
\ar[d]^\theta\ar[r]^{g_1}&M\ar@{=}[d]\ar[r]&0\\0\ar[r]&C
\ar[r]^{f_2}&E_2\ar[r]^{g_2}&M\ar[r]&0.}
	\end{align}
	We denote the equivalent classes of non-abelian extensions by $\mathrm{Ext}_{nab}(M,C)$.
\end{Def}

\begin{Def} Let $(C,\Delta_C,R_C)$ and $(M,\Delta_M,R_M)$ be two $\lambda$-weighted Rota-Baxter Lie coalgebras.
	A non-abelian 2-cocycle on $(C,\Delta_C,R_C)$ with values in $(M,\Delta_M,R_M)$ is a triple $(h,\rho,\phi)$ of linear maps $h:M\rightarrow C\otimes C,~\rho:M\rightarrow M\otimes C$ and $\phi:M\rightarrow C$, satisfying the following identities:
\begin{equation}
 \tau h+h=0,\label{n0}\end{equation}
\begin{equation}
 (I\otimes\Delta_C)h-(\Delta_C\otimes I)h+(I\otimes\tau)(\Delta_C\otimes I)h
=(h\otimes I)\rho+(I\otimes h)\tau\rho-(I\otimes\tau)(h\otimes I)\rho,\label{n1}\end{equation}
\begin{equation}(I\otimes\Delta_C)\rho+(I\otimes h)\Delta_M=(\rho\otimes I)\rho-(I\otimes\tau)(\rho\otimes I)\rho,\label{n2}\end{equation}
\begin{equation} (\Delta_M\otimes I)\rho=(I\otimes\tau)(\rho\otimes I)\Delta_M+(I\otimes\rho)\Delta_M,\label{n5}\end{equation}
\begin{align}
&(\phi\otimes R_C)\rho-(R_C\otimes\phi)\tau\rho-(\phi\otimes I)\rho R_M+(I\otimes\phi)\tau\rho R_M\notag\\
&+(\phi\otimes\phi)\Delta_M-(I\otimes R_C+R_C\otimes I+\lambda)\Delta_C\phi\notag\\&=(R_C\otimes I+I\otimes R_C+\lambda)hR_M-(R_C\otimes R_C)h,\label{n6}\end{align}
\begin{equation}
 (R_M\otimes\phi)\Delta_M+(R_M\otimes R_C)\rho=(I\otimes\phi)\Delta_MR_M
+(I\otimes R_C+R_M\otimes I+\lambda)\rho R_M.\label{n7}
\end{equation}
\end{Def}

\begin{Def} Two non-abelian 2-cocycles $(h,\rho,\phi)$ and
$(h',\rho',\phi')$ on $(C,\Delta_C,R_C)$ with values in $(M,\Delta_M,R_M)$ are said to be equivalent, if there
exists a linear map $\varphi:M\rightarrow C$
such that
the following equalities hold:
\begin{align}
		h'-h&=(\varphi\otimes I)\rho-\tau(\varphi\otimes I)\rho+(\varphi\otimes\varphi)\Delta_M-\Delta_C\varphi,\label{eqc1}\\
		\rho'-\rho&=(I\otimes \varphi)\Delta_M,\label{eqc2}\\
		\phi'-\phi&=\varphi R_M-R_C\varphi.\label{eqc3}
	\end{align}
\end{Def}	
Denote the set of all non-abelian 2-cocycles on $(C,\Delta_C,R_C)$ with values in $(M,\Delta_M,R_M)$ by $\mathrm{Z}_{nab}^{2}(M,C)$.
The non-abelian second cohomology group $\mathrm{H}^2_{nab}(M,C)$
 is the quotient of $\mathrm{Z}_{nab}^{2}(M,C)$ by this equivalence relation. Denote
the equivalent class of non-abelian 2-cocycle $(h,\rho,\phi)$ by $[(h,\rho,\phi)]$.

Using the above notations, we define a linear map $$\Delta_{(h,\rho,\phi)}:C\oplus M\longrightarrow (C\oplus M)\otimes (C\oplus M)$$ by
\begin{equation}
	\Delta_{(h,\rho)}(c+m)=\Delta_C(c)+h(m)+\Delta_M(m)+\rho(m)-\tau\rho(m),~~\forall~c\in C,m\in M,\label{C1}
\end{equation}
and a linear map $$R_\phi: C\oplus M\longrightarrow C\oplus M$$ by
\begin{equation}
	R_\phi(c+m)=R_C(c)+R_M(m)+\phi(m),~~\forall~c\in C,m\in M.\label{C2}
\end{equation}
\begin{Prop} \label{Pro: LCN}
	 With the above notations,  $(C\oplus M,\Delta_{(h,\rho)},R_\phi)$ is a $\lambda$-weighted
Rota-Baxter Lie coalgebra if and only if $(h,\rho,\phi)$ is a non-abelian 2-cocycles
on $(C,\Delta_C,R_C)$ with values in $(M,\Delta_M,R_M)$. Denote the $\lambda$-weighted
Rota-Baxter Lie coalgebra $(C\oplus M,\Delta_{(h,\rho)},R_\phi)$ simply by $C\oplus_{(h,\rho,\phi)} M$.
\end{Prop}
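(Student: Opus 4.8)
The plan is to verify, for an arbitrary element $c+m\in C\oplus M$, each of the three defining axioms of a $\lambda$-weighted Rota-Baxter Lie coalgebra for $(C\oplus M,\Delta_{(h,\rho)},R_\phi)$, and to read off from the decomposition of $(C\oplus M)^{\otimes 2}$ and $(C\oplus M)^{\otimes 3}$ into summands of the form $C^{\otimes a}\otimes\cdots\otimes M^{\otimes b}$ (indexed by how many tensor legs land in $M$) exactly which of Eqs.~(\ref{n0})--(\ref{n7}) must hold. Because $(C,\Delta_C,R_C)$ and $(M,\Delta_M,R_M)$ are themselves $\lambda$-weighted Rota-Baxter Lie coalgebras, the "purely in $C$" and "purely in $M$" components of each axiom vanish automatically; only the mixed components carry content, and running the argument on the summand $c$ always reduces to the corresponding axiom for $(C,\Delta_C,R_C)$, so it suffices to treat the input $m$.

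First I would check the skew-symmetry condition~(\ref{1.1}). By (\ref{C1}), $(\Delta_{(h,\rho)}+\tau\Delta_{(h,\rho)})(c+m)$ splits as $(\Delta_C+\tau\Delta_C)(c)$ (zero by (\ref{1.1}) for $C$), plus $(\Delta_M+\tau\Delta_M)(m)$ (zero by (\ref{1.1}) for $M$), plus $(\rho-\tau\rho)(m)+\tau(\rho-\tau\rho)(m)$ (zero since $\rho-\tau\rho$ is manifestly $\tau$-skew), plus $(h+\tau h)(m)$. Hence (\ref{1.1}) for $\Delta_{(h,\rho)}$ is equivalent to (\ref{n0}).

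Next I would expand the co-Jacobi identity (\ref{LC}) for $\Delta_{(h,\rho)}$ on $m$, pushing $\Delta_{(h,\rho)}$ through each of the three terms and sorting the output in $(C\oplus M)^{\otimes 3}$ by the number of $M$-legs. The three-$M$ component is the co-Jacobi identity for $\Delta_M$, hence automatic; the two-$M$ component, after normalizing slot positions via the $\tau$-skewness of $\Delta_M$ and of $\rho-\tau\rho$, reduces to (\ref{n5}); the one-$M$ component (collecting the first-slot representative) reduces to (\ref{n2}), which specializes at $h=0$ to the comodule axiom (\ref{com}); and the zero-$M$ component, involving only $h$, $\Delta_C$ and $\rho$, is exactly (\ref{n1}). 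So (\ref{LC}) for $\Delta_{(h,\rho)}$ holds if and only if (\ref{n1}), (\ref{n2}) and (\ref{n5}) hold. Similarly, I would expand the Rota-Baxter identity (\ref{R}) for $(R_\phi,\Delta_{(h,\rho)})$ on $m$, using (\ref{C2}) and $R_\phi(m)=R_M(m)+\phi(m)$, and sort the output in $(C\oplus M)^{\otimes 2}$ by the number of $M$-legs: the two-$M$ component is (\ref{R}) for $(R_M,\Delta_M)$, hence automatic; the one-$M$ component (first-slot representative) reduces to (\ref{n7}), which specializes at $\phi=0$ to (\ref{rR}); and the zero-$M$ component, gathering all the $h$-, $\phi$-, $R_C$- and $\Delta_C$-terms from both sides of (\ref{R}), rearranges precisely to (\ref{n6}). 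Thus (\ref{R}) for $R_\phi$ holds if and only if (\ref{n6}) and (\ref{n7}) hold. Combining the three steps, $(C\oplus M,\Delta_{(h,\rho)},R_\phi)$ is a $\lambda$-weighted Rota-Baxter Lie coalgebra if and only if all of Eqs.~(\ref{n0})--(\ref{n7}) hold, i.e., if and only if $(h,\rho,\phi)\in\mathrm{Z}_{nab}^{2}(M,C)$.

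I expect the main obstacle to be organizational rather than conceptual: one must keep precise track of which tensor slots the $C$- and $M$-legs occupy as the various $\tau$'s and the composite $\rho-\tau\rho$ are pushed through the coproducts, and then verify that the several positional variants of each mixed component genuinely collapse onto the single normalized identity recorded in Eqs.~(\ref{n1})--(\ref{n7}) once (\ref{n0}) and the $\tau$-skewness of $\Delta_C$ and $\Delta_M$ are invoked. The sign bookkeeping in the co-Jacobi expansion (three terms, one of them carrying $(I\otimes\tau)$) and in the Rota-Baxter expansion is the part that most needs care.
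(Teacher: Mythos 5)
Your proposal is correct and follows essentially the same route as the paper: expand the three axioms (\ref{1.1}), (\ref{LC}), (\ref{R}) for $(\Delta_{(h,\rho)},R_\phi)$ on elements of $C$ and of $M$, note that the inputs from $C$ and the pure $C$- and $M$-components are automatic since $(C,\Delta_C,R_C)$ and $(M,\Delta_M,R_M)$ are themselves $\lambda$-weighted Rota-Baxter Lie coalgebras, and match the mixed components with Eqs.~(\ref{n0})--(\ref{n7}). The paper's proof is exactly this componentwise comparison, only written with the explicit expansions rather than the leg-counting bookkeeping you describe.
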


\begin{proof}
	$(C\oplus M,\Delta_{(h,\rho)},R_\phi)$ is a $\lambda$-weighted
Rota-Baxter Lie coalgebra if and only if Eqs.~(\ref{1.1})-(\ref{R}) hold for $(\Delta_{(h,\rho)},R_\phi)$.
It is easy to be proved that (\ref{1.1}) holds if and only if (\ref{n0}) holds.
 For any $c\in C$, Eqs.~(\ref{LC}), (\ref{R}) hold if and only if $(C,\Delta_C,R_C)$ is a $\lambda$-weighted
Rota-Baxter Lie coalgebra.
According to Eqs.~(\ref{LC}), (\ref{R}) and (\ref{C1}), for all $m\in M$,
\begin{align*}
	&(I\otimes\Delta_{(h,\rho)})\Delta_{(h,\rho)}(m)\\
	=&(I\otimes\Delta_{(h,\rho)})(\Delta_M(m)+\rho(m)-\tau\rho(m)+h(m))\\
	=&(I\otimes(\cancel{\Delta_M(m)}+\rho(m)-\tau\rho(m)+h(m)))\Delta_M(m)+(I\otimes\Delta_C)\rho(m)\\
	&-(I\otimes(\Delta_M(m)+\rho(m)-\tau\rho(m)+h(m)))\tau\rho(m)+(I\otimes\Delta_C)h(m),\\
	&(\Delta_{(h,\rho)}\otimes I)\Delta_{(h,\rho)}(m)\\
	=&(\Delta_{(h,\rho)}\otimes I)(\Delta_M(m)+\rho(m)-\tau\rho(m)+h(m))\\
	=&(\Delta_C\otimes I)h(m)+(\cancel{(\Delta_M(m)}+\rho(m)-\tau\rho(m)+h(m))\otimes I)\Delta_M(m)\\
	&-(\Delta_C \otimes I)\tau\rho(m)+((\Delta_M(m)+\rho(m)-\tau\rho(m)+h(m))\otimes I)\rho(m),\\
	&(I\otimes\tau)(\Delta_{(h,\rho)}\otimes I)\Delta_{(h,\rho)}(m)\\
	=&(I\otimes\tau)(\Delta_C\otimes I)h(m)+(I\otimes\tau)((\cancel{\Delta_M(m)}+\rho(m)-\tau\rho(m)+h(m))\otimes I)\Delta_M(m)\\
	&-(I\otimes\tau)(\Delta_C \otimes I)\tau\rho(m)+(I\otimes\tau)((\Delta_M(m)+\rho(m)-\tau\rho(m)+h(m))\otimes I)\rho(m).
\end{align*}
Thus Eq.~(\ref{LC}) holds if and only if Eqs.~(\ref{n1})-(\ref{n5}) hold. By the same token, Eq.~(\ref{R}) holds for $(\Delta_{(h,\rho)},R_\phi)$
if and only if Eqs.~(\ref{n6}),(\ref{n7}) hold. The proof is completed.
\end{proof}

Let $(C,\Delta_C,R_C)$ and $(M,\Delta_M,R_M)$ be two $\lambda$-weighted Rota-Baxter Lie coalgebras.
Suppose that
\begin{align*}
		\xymatrix@C=20pt{\mathcal{E}:0\ar[r]&C\ar[r]^-f&E\ar[r]^-g&M\ar[r]&0.}
	\end{align*}
is a non-abelian extension of $(C,\Delta_C,R_C)$ by $(M,\Delta_M,R_M)$ with a retraction $t$ of $(E,\Delta_E,R_E)$.
For all $m\in M$, since $g$ is surjective, there exists an element $e\in E$, such that $m=g(e)$.
Define linear maps $h_{t}:M\rightarrow C\otimes C,~\rho_{t}:M\rightarrow M\otimes C$ and $\phi_{t}:M\rightarrow C$ respectively by
\begin{align}
	&h_{t}(m)=h_{t}(g(e))=(t\otimes t)\Delta_E(e)-\Delta_C t(e),\label{2co1}\\
	&\rho_{t}(m)=\rho_{t}(g(e))=(g\otimes t)\Delta_E(e),\label{2co2}\\
	&\phi_{t}(m)=\phi_{t}(g(e))=tR_E(e)-R_Ct(e).\label{2co3}
\end{align}
For all $m\in M$, if $m=g(e_1)=g(e_2),~e_1,e_2\in E$, since $\mathrm{Ker}g= \mathrm{Im}f$, there exists an element
$c\in C$ such that $f(c)=e_1-e_2$. Using $(f\otimes f)\Delta_C=\Delta_C f$ and $tf=I_{C}$, we have
\begin{align*}&((t\otimes t)\Delta_E-\Delta_C t)(e_1-e_2)
\\=&(t\otimes t)\Delta_Ef(c)-\Delta_C tf(c)
\\=&(tf\otimes tf)\Delta_C(c)-\Delta_C (c)
\\=&0,\end{align*}
which implies that $h$ is independent on the choice of $e$. Similarly, we can prove that $\rho,\phi$ are independent on the choice of $e$.

\begin{Prop} \label{CY}
	 With the above notations, $(h_{t},\rho_{t},\phi_{t})$ is a non-abelian 2-cocycles
on $(C,\Delta_C,R_C)$ with values in $(M,\Delta_M,R_M)$. We call it the non-abelian 2-cocycle
corresponding to the extension $\mathcal{E}$ induced by the retraction $t$. Moreover,
 $(C\oplus M,\Delta_{(h_{t},\rho_{t})},R_{\phi_{t}})$ is a $\lambda$-weighted
Rota-Baxter Lie coalgebra. Denote this $\lambda$-weighted
Rota-Baxter Lie coalgebra simply by $C\oplus_{(h_{t},\rho_{t},\phi_{t})} M$.
\end{Prop}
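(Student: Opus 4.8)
The plan is to realize the extension $(E,\Delta_E,R_E)$ as the $\lambda$-weighted Rota-Baxter Lie coalgebra $C\oplus_{(h_t,\rho_t,\phi_t)}M$ of Proposition~\ref{Pro: LCN}, and then read off the cocycle conditions for free. First I would use the retraction $t$ to produce a vector-space splitting of $\mathcal{E}$. Since $tf=I_C$ and $\mathrm{Ker}\,g=\mathrm{Im}\,f$, the linear map $\theta\colon E\to C\oplus M$, $\theta(e)=(t(e),g(e))$, is a linear isomorphism: it is injective because $t(e)=g(e)=0$ forces $e=f(c)$ with $c=tf(c)=t(e)=0$, and it is surjective because given $(c,m)$ one may pick $\bar m$ with $g(\bar m)=m$ and check $\theta\big(\bar m+f(c-t(\bar m))\big)=(c,m)$. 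Equivalently, $s(m):=\bar m-f(t(\bar m))$ is a well-defined linear section of $g$ with $ts=0$ and $ft+sg=I_E$, so that $\Theta:=\theta^{-1}$ is $\Theta(c+m)=f(c)+s(m)$.

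Second, I would transport $\Delta_E$ and $R_E$ through $\Theta$ and compare with (\ref{C1})--(\ref{C2}). Since $ft$ and $sg$ are complementary idempotents on $E$, the space $E\otimes E$ splits into the four blocks $fC\otimes fC$, $fC\otimes sM$, $sM\otimes fC$, $sM\otimes sM$. Applying $\Delta_E$ to $f(c)$ and using that $f$ is a homomorphism gives $(f\otimes f)\Delta_C(c)$, which matches the $C$-part of (\ref{C1}). Applying $\Delta_E$ to $s(m)$, projecting onto the four blocks, and using $ts=0$ together with (\ref{2co1}) and (\ref{2co2}), the $fC\otimes fC$ block is $(f\otimes f)h_t(m)$ and the $sM\otimes fC$ block is $(s\otimes f)\rho_t(m)$; cocommutativity (\ref{1.1}) of $\Delta_E$ then forces the $fC\otimes sM$ block to be $-(f\otimes s)\tau\rho_t(m)$; and $(g\otimes g)\Delta_E s(m)=\Delta_M(m)$, since $g$ is a homomorphism, gives the $sM\otimes sM$ block $(s\otimes s)\Delta_M(m)$. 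Hence $\Delta_E\Theta=(\Theta\otimes\Theta)\Delta_{(h_t,\rho_t)}$. Likewise, writing $R_E s(m)=ft R_E s(m)+sg R_E s(m)$ and using $R_Ef=fR_C$, $gR_E=R_Mg$, $ts=0$ and (\ref{2co3}) gives $R_E\big(f(c)+s(m)\big)=fR_C(c)+sR_M(m)+f\phi_t(m)$, i.e. $R_E\Theta=\Theta R_{\phi_t}$ by (\ref{C2}).

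Third, I would conclude by transport of structure: since $\Theta$ is a linear isomorphism intertwining $(\Delta_E,R_E)$ with $(\Delta_{(h_t,\rho_t)},R_{\phi_t})$ and $(E,\Delta_E,R_E)$ is a $\lambda$-weighted Rota-Baxter Lie coalgebra, so is $(C\oplus M,\Delta_{(h_t,\rho_t)},R_{\phi_t})$, which is exactly the ``moreover'' part. Feeding this into Proposition~\ref{Pro: LCN} then yields that $(h_t,\rho_t,\phi_t)$ is a non-abelian $2$-cocycle on $(C,\Delta_C,R_C)$ with values in $(M,\Delta_M,R_M)$. The main obstacle I anticipate is purely bookkeeping: carrying out the four-block decomposition of $\Delta_E s(m)$ and tracking the sign contributed by cocommutativity so that it lines up with the deliberately asymmetric expression (\ref{C1}); the verification that $\theta$ is an isomorphism, that $s$ is well defined, and the final transport-of-structure step are all routine.
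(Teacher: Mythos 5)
Your proposal is correct, but it takes a different route from the paper, whose proof of this proposition is simply a direct verification: one checks the seven identities (\ref{n0})--(\ref{n7}) for $(h_t,\rho_t,\phi_t)$ by substituting the defining formulas (\ref{2co1})--(\ref{2co3}) and using that $f,g$ are morphisms of $\lambda$-weighted Rota--Baxter Lie coalgebras together with the axioms (\ref{1.1}), (\ref{LC}), (\ref{R}) on $E$, and then invokes Proposition \ref{Pro: LCN} for the ``moreover'' part. You instead build the splitting $s$ with $ts=0$, $ft+sg=I_E$, show that $\Theta(c+m)=f(c)+s(m)$ intertwines $(\Delta_E,R_E)$ with $(\Delta_{(h_t,\rho_t)},R_{\phi_t})$ via the four-block decomposition of $\Delta_E s(m)$ (your block computations, including the sign from $\Delta_E=-\tau\Delta_E$ and the use of the lift $e=s(m)$ which kills the $\Delta_C t(e)$ and $R_Ct(e)$ terms, are all sound, and the independence of $h_t,\rho_t,\phi_t$ from the lift justifies this choice), transport the structure to get the ``moreover'' statement, and then deduce the cocycle identities from the ``only if'' direction of Proposition \ref{Pro: LCN}; there is no circularity, since that proposition is proved independently. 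What your approach buys is economy and conceptual clarity: you never re-derive the seven identities, and your intertwining computation is essentially the content of the paper's later Lemma \ref{E1} (which the paper proves separately, in the reverse logical order, after this proposition is known), so your argument proves both at once. What the paper's direct calculation buys is that it does not rely on the extension being realizable on $C\oplus M$ and exhibits explicitly which axiom of $E$ produces which cocycle identity.
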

\begin{proof}
It can obtained by direct calculation.
\end{proof}

\begin{lemma}\label{Enc0}
Let $(h_i,\rho_i,\phi_i)$ be the non-abelian 2-cocycle
 corresponding to the extension \begin{align*}
		\xymatrix@C=20pt{\mathcal{E}:0\ar[r]&C\ar[r]^-f&E\ar[r]^-g&M\ar[r]&0.}
	\end{align*}
 induced by retraction $t_i$~(i=1,2).
 Then $(h_1,\rho_1,\phi_1)$ and $(h_2,\rho_2,\phi_2)$
 are equivalent, that is, the equivalent classes of non-abelian 2-cocycles corresponding to
 a non-abelian extension
induced by a retraction are independent on the choice of retractions.
\end{lemma}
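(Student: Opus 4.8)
The plan is to manufacture from the two retractions a single linear map $\varphi:M\rightarrow C$ and to check that it witnesses the equivalence of the two non-abelian $2$-cocycles, i.e. that Eqs.~(\ref{eqc1})--(\ref{eqc3}) hold with $(h,\rho,\phi)=(h_2,\rho_2,\phi_2)$, $(h',\rho',\phi')=(h_1,\rho_1,\phi_1)$ and this $\varphi$.

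First I would construct $\varphi$. Since $t_1f=I_C=t_2f$, the linear map $t_1-t_2\colon E\rightarrow C$ vanishes on $\mathrm{Im}f=\mathrm{Ker}g$: concretely, if $g(e)=g(e')$ then $e-e'=f(c)$ for some $c\in C$, and $(t_1-t_2)(e-e')=(t_1f-t_2f)(c)=0$. Hence $(t_1-t_2)(e)$ depends only on $m=g(e)$, and setting $\varphi(m)=(t_1-t_2)(e)$ yields a well-defined linear map with $\varphi g=t_1-t_2$, equivalently $t_1=t_2+\varphi g$.

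Next I would dispatch Eqs.~(\ref{eqc2}) and (\ref{eqc3}), which are immediate once one recalls that $g$ is a morphism of $\lambda$-weighted Rota-Baxter Lie coalgebras, so $(g\otimes g)\Delta_E=\Delta_Mg$ and $gR_E=R_Mg$. For $m=g(e)$, the definition (\ref{2co2}) gives
\[\rho_1(m)-\rho_2(m)=(g\otimes(t_1-t_2))\Delta_E(e)=(I\otimes\varphi)(g\otimes g)\Delta_E(e)=(I\otimes\varphi)\Delta_M(m),\]
and the definition (\ref{2co3}) gives
\[\phi_1(m)-\phi_2(m)=\varphi gR_E(e)-R_C\varphi g(e)=\varphi R_Mg(e)-R_C\varphi(m)=\varphi R_M(m)-R_C\varphi(m).\]

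Finally I would handle Eq.~(\ref{eqc1}). Substituting $t_1=t_2+\varphi g$ into $h_1(m)=(t_1\otimes t_1)\Delta_E(e)-\Delta_Ct_1(e)$ and expanding the bilinear term, the pure $t_2\otimes t_2$ contribution together with $-\Delta_Ct_2(e)$ reproduces $h_2(m)$; the term $-\Delta_C(\varphi g(e))$ yields $-\Delta_C\varphi(m)$; the term $(\varphi g\otimes\varphi g)\Delta_E(e)$ yields $(\varphi\otimes\varphi)(g\otimes g)\Delta_E(e)=(\varphi\otimes\varphi)\Delta_M(m)$; and the cross term $(\varphi g\otimes t_2)\Delta_E(e)$ yields $(\varphi\otimes I)\rho_2(m)$ by (\ref{2co2}). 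The remaining cross term $(t_2\otimes\varphi g)\Delta_E(e)=(I\otimes\varphi)(t_2\otimes g)\Delta_E(e)$ is where cocommutativity (\ref{1.1}) enters: from $\Delta_E=-\tau\Delta_E$ and $(t_2\otimes g)\tau=\tau(g\otimes t_2)$ one gets $(t_2\otimes g)\Delta_E(e)=-\tau\rho_2(m)$, and then $(I\otimes\varphi)\tau=\tau(\varphi\otimes I)$ rewrites this term as $-\tau(\varphi\otimes I)\rho_2(m)$. Collecting the pieces gives
\[h_1(m)-h_2(m)=(\varphi\otimes I)\rho_2(m)-\tau(\varphi\otimes I)\rho_2(m)+(\varphi\otimes\varphi)\Delta_M(m)-\Delta_C\varphi(m),\]
which is exactly (\ref{eqc1}). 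The only point requiring genuine care is this last manipulation of the swap map; everything else is formal. Once (\ref{eqc1})--(\ref{eqc3}) are verified for the $\varphi$ built above, the $2$-cocycles $(h_1,\rho_1,\phi_1)$ and $(h_2,\rho_2,\phi_2)$ are equivalent, as claimed.
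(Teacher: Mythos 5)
Your proposal is correct and follows essentially the same route as the paper: define $\varphi$ as the difference of the two retractions pushed down to $M$ (the paper uses $t_2-t_1$ where you use $t_1-t_2$, which merely swaps the roles of the primed and unprimed cocycles), check it is well defined via $\mathrm{Ker}\,g=\mathrm{Im}\,f$ and $t_if=I_C$, and verify Eqs.~(\ref{eqc1})--(\ref{eqc3}) by expanding the bilinear terms, using that $g$ intertwines the costructures and $\Delta_E=-\tau\Delta_E$. Your write-up additionally spells out the $\rho$- and $\phi$-verifications and the $\tau$-manipulation that the paper dismisses with ``analogously,'' but the argument is the same.
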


\begin{proof}
	Let $(E,\Delta_E,R_E)$ be a non-abelian extension of $(C,\Delta_C,R_C)$ by $(M,\Delta_M,R_M)$.
Suppose that $t_1,t_2$ are different retractions of the extension $\mathcal{E}$,
 $(h_1,\rho_1,\phi_1)$ and $(h_2,\rho_2,\phi_2)$ are the corresponding non-abelian 2-cocycles respectively.
	Since $g$ is surjective, there is an element $e\in E$ for all $m\in M $ such that $m=g(e)$. So we can
define a linear map $\varphi:M\rightarrow C$ by
	\begin{equation}
		\varphi(m)=\varphi(g(e))=t_2(e)-t_1(e),~~\forall~m\in M.\label{C3}
	\end{equation}
In the light of Eqs.~(\ref{2co1})-(\ref{2co3}) and (\ref{C3}), for all $m\in M $, we have
	\begin{align*}
		&h_2(m)-h_1(m)\\=&(t_2\otimes t_2)\Delta_E(e)-\Delta_Ct_2(e)-(t_1\otimes t_1)\Delta_E(e)+\Delta_Ct_1(e)
		\\=&((\varphi g+t_1)\otimes (\varphi g+t_1))\Delta_E(e)-\Delta_C(\varphi g+t_1)(e)-(t_1\otimes t_1)\Delta_E(e)+\Delta_Ct_1(e)
		\\=&(\varphi g\otimes \varphi g)\Delta_E(e)+(t_1\otimes \varphi g)\Delta_E(e)+(\varphi g\otimes t_1)\Delta_E(e)-\Delta_C\varphi(m)
		\\=&(\varphi\otimes \varphi )\Delta_M(m)-\tau(\varphi\otimes I)\rho(m)+(\varphi\otimes I)\rho(m)-\Delta_C\varphi(m).\end{align*}
Analogously, \begin{align*}&\rho_2(m)-\rho_1(m)=(I\otimes\varphi)\Delta_M(m),\\
		&\phi_2(m)-\phi_1(m)=\varphi R_M(m)-R_C\varphi(m).
	\end{align*}
Thus $(h_1,\rho_1,\phi_1)$ and $(h_2,\rho_2,\phi_2)$ are equivalent non-abelian 2-cocycles via a linear map $\varphi$.
\end{proof}

According to Proposition \ref{Pro: LCN} and Proposition \ref{CY}, given a non-abelian extension
 \begin{align*}
		\xymatrix@C=20pt{\mathcal{E}:0\ar[r]&C\ar[r]^-f&E\ar[r]^-g&M\ar[r]&0.}
	\end{align*}
of $(C,\Delta_C,R_C)$ by $(M,\Delta_M,R_M)$ with a retraction $t$, we have a non-abelian 2-cocycle
 $(h_{t},\rho_{t},\phi_{t})$ and a $\lambda$-weighted Rota-Baxter Lie coalgebras $(C\oplus M,\Delta_{(h_{t},\rho_{t})},R_{\phi_{t}})$.
 It follows that
\begin{align*}
\xymatrix@C=20pt{\mathcal{E}_{(h_{t},\rho_{t},\phi_{t})}:0\ar[r]&C\ar[r]^-f&C\oplus_{(h_{t},\rho_{t},\phi_{t})} M\ar[r]^-g&M\ar[r]&0}
	\end{align*}
is a non-abelian extension of $(C,\Delta_C,R_C)$ by $(M,\Delta_M,R_M)$.
Define a linear map
\begin{equation*} \theta:E\longrightarrow C\oplus_{(h_{t},\rho_{t},\phi_{t})}M,~\theta(w)=g(w)+t(w),~~\forall~w\in E.\end{equation*}
We claim that $\theta$ is an isomorphism of $\lambda$-weighted Rota-Baxter Lie coalgebras (the proof can be found in Lemma \ref{E1} ) such that
the following commutative diagram holds:
 \begin{align*} \xymatrix@C=20pt@R=20pt{\mathcal{E}:0\ar[r]&C\ar@{=}[d]\ar[r]^{f}&E
\ar[d]^\theta\ar[r]^{g}&M\ar@{=}[d]\ar[r]&0\\ \mathcal{E}_{(h_{t},\rho_{t},\phi_{t})}:0\ar[r]&C
\ar[r]^-{i}&C\oplus_{(h_{t},\rho_{t},\phi_t)}M\ar[r]^-{\pi}&M\ar[r]&0.}
	\end{align*}
 which indicates that the non-abelian extensions $\mathcal{E}$ and $\mathcal{E}_{(h_{t},\rho_{t},\phi_t)}$
 of $(C,\Delta_C,R_C)$ by $(M,\Delta_M,R_M)$
are equivalent. On the other hand, if $(h,\rho,\phi)$
 is a non-abelian 2-cocycle on $(C,\Delta_C,R_C)$ with values in
$(M,\Delta_M,R_M)$, there is a
$\lambda$-weighted Rota-Baxter Lie coalgebra $(C\oplus M,\Delta_{(h,\rho)},R_{\phi})$, which yields the following
 non-abelian extension of $(C,\Delta_C,R_C)$ by $(M,\Delta_M,R_M)$:
 \begin{align*}
\xymatrix@C=20pt{\mathcal{E}_{(h,\rho,\phi)}:0\ar[r]&C\ar[r]^-i&C\oplus_{(h,\rho,\phi)} M\ar[r]^-\pi&M\ar[r]&0}
	\end{align*}
where $i$ is the inclusion and $\pi$ is the projection.

In the sequel, we characterize the relationship between non-abelian 2-cocycles and non-abelian extensions.

\begin{Theo} \label{Ccy2}
Let $(C,\Delta_C,R_C)$ and $(M,\Delta_M,R_M)$ be two $\lambda$-weighted Rota-Baxter Lie coalgebras.
 Then the equivalent classes of non-abelian extensions of $(C,\Delta_C,R_C)$ by $(M,\Delta_M,R_M)$
are classified by the equivalent classes of non-abelian 2-cocycles. In other words,
\[\mathrm{Ext}_{nab}(M,C)\cong \mathrm{H}^2_{nab}(M,C).\]
\end{Theo}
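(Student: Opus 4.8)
The plan is to construct mutually inverse maps between $\mathrm{Ext}_{nab}(M,C)$ and $\mathrm{H}^2_{nab}(M,C)$ and check they are well-defined. In one direction, given a non-abelian extension $\mathcal{E}$, I would pick a retraction $t$ (which exists because $\mathcal{E}$ splits as a sequence of vector spaces), form the triple $(h_t,\rho_t,\phi_t)$ via Eqs.~(\ref{2co1})--(\ref{2co3}), and send $[\mathcal{E}]$ to $[(h_t,\rho_t,\phi_t)]$. Proposition~\ref{CY} guarantees this triple is a non-abelian 2-cocycle, and Lemma~\ref{Enc0} guarantees the cohomology class is independent of the chosen retraction, so the map $\Psi:\mathrm{Ext}_{nab}(M,C)\to \mathrm{H}^2_{nab}(M,C)$ is well-defined on representatives of a fixed extension. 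In the other direction, given a non-abelian 2-cocycle $(h,\rho,\phi)$, Proposition~\ref{Pro: LCN} produces the $\lambda$-weighted Rota-Baxter Lie coalgebra $C\oplus_{(h,\rho,\phi)}M$ and hence the extension $\mathcal{E}_{(h,\rho,\phi)}$; I would send $[(h,\rho,\phi)]$ to $[\mathcal{E}_{(h,\rho,\phi)}]$, defining $\Phi:\mathrm{H}^2_{nab}(M,C)\to \mathrm{Ext}_{nab}(M,C)$.

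The first genuine verification is that $\Phi$ is well-defined: if $(h,\rho,\phi)$ and $(h',\rho',\phi')$ are equivalent non-abelian 2-cocycles via a linear map $\varphi:M\to C$ satisfying Eqs.~(\ref{eqc1})--(\ref{eqc3}), then the extensions $\mathcal{E}_{(h,\rho,\phi)}$ and $\mathcal{E}_{(h',\rho',\phi')}$ are equivalent. Here I would exhibit the isomorphism $\theta:C\oplus_{(h,\rho,\phi)}M\to C\oplus_{(h',\rho',\phi')}M$ given by $\theta(c+m)=c-\varphi(m)+m$ (or $c+\varphi(m)+m$, depending on sign conventions), and check directly that $\theta$ intertwines the comultiplications $\Delta_{(h,\rho)}$ and $\Delta_{(h',\rho')}$ using (\ref{eqc1})--(\ref{eqc2}), intertwines the Rota-Baxter operators $R_\phi$ and $R_{\phi'}$ using (\ref{eqc3}), and makes the diagram (\ref{d1}) commute. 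This is a direct but somewhat lengthy computation; conversely one must also check that two equivalent extensions give rise to cohomologous cocycles, which follows by the same calculation run backwards using the isomorphism from the equivalence.

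Next I would verify $\Phi\circ\Psi=\mathrm{id}$ and $\Psi\circ\Phi=\mathrm{id}$. For $\Psi\circ\Phi$: starting from a cocycle $(h,\rho,\phi)$, the extension $\mathcal{E}_{(h,\rho,\phi)}$ has the obvious retraction $t=\mathrm{pr}_C:C\oplus M\to C$, and plugging this $t$ into Eqs.~(\ref{2co1})--(\ref{2co3}) recovers exactly $(h,\rho,\phi)$ by the definition (\ref{C1})--(\ref{C2}) of $\Delta_{(h,\rho)}$ and $R_\phi$; this is essentially immediate. For $\Phi\circ\Psi$: starting from an extension $\mathcal{E}$ with retraction $t$, I must show $\mathcal{E}$ is equivalent to $\mathcal{E}_{(h_t,\rho_t,\phi_t)}$. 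This is precisely the claim made in the paragraph preceding the theorem: the map $\theta:E\to C\oplus_{(h_t,\rho_t,\phi_t)}M$, $\theta(w)=g(w)+t(w)$, is an isomorphism of $\lambda$-weighted Rota-Baxter Lie coalgebras fitting into the commutative diagram, with the proof deferred to Lemma~\ref{E1}. I expect the main obstacle to be the bookkeeping in this last step — verifying that $\theta$ respects $\Delta_E$ and $R_E$ requires expanding $(t\otimes t)\Delta_E$, $(g\otimes t)\Delta_E$, $(g\otimes g)\Delta_E$ componentwise (noting $g$ kills $C$ and $t$ restricts to the identity on $C$) and matching each piece against $\Delta_{(h_t,\rho_t)}$; the Rota-Baxter compatibility similarly unwinds $R_E$ through $t$ and $g$. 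Once $\theta$ is shown to be a bijective homomorphism commuting with the maps in the two sequences, all four verifications combine to give the bijection $\mathrm{Ext}_{nab}(M,C)\cong \mathrm{H}^2_{nab}(M,C)$.
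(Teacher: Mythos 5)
Your proposal is correct and follows essentially the same route as the paper: the paper defines the map from extension classes to cohomology classes via a retraction (well-defined by Proposition~\ref{CY}, Lemma~\ref{Enc0}, and the observation that equivalent extensions admit compatible retractions), proves injectivity by the very isomorphism $\theta(c+m)=c+\varphi(m)+m$ you propose, uses Lemma~\ref{E1} to identify any extension with $\mathcal{E}_{(h_t,\rho_t,\phi_t)}$, and gets surjectivity from Proposition~\ref{Pro: LCN}, which is exactly your pair of mutually inverse maps presented as a single bijection. The only cosmetic difference is packaging (two inverse maps versus one well-defined bijection), so no further comparison is needed.
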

\begin{proof}
 Define a linear map
 \begin{equation*}\Phi:\mathrm{Ext}_{nab}(M,C)\longrightarrow \mathrm{H}^2_{nab}(M,C),~\end{equation*}
where $\Phi$ assigns an equivalent class of non-abelian extensions to the classes of corresponding non-abelian 2-cocycles.
Firstly, we prove that $\Phi$ is well-defined.
 Assume that two non-abelian extensions $(E_1,\Delta_{E_1},R_{E_1})$ and $(E_2,\Delta_{E_2},R_{E_2})$
of $(C,\Delta_C,R_C)$ by $(M,\Delta_M,R_M)$
 are equivalent via an isomorphism $\theta$, that is, the commutative diagram (\ref{d1}) holds. Let $t_2$ be
a retraction of $(E_2,\Delta_{E_2},R_{E_2})$.
Thanks to $t_2\theta f_1=t_2f_2=I_{C}$, we have $t_1=t_2\theta$ is a retraction of $(E_1,\Delta_{E_1},R_{E_1})$.
Let $(h_1,\rho_1,\phi_1)$ and $(h_2,\rho_2,\phi_2)$ be the corresponding non-abelian 2-cocycles
induced by retractions $t_1,t_2$ respectively. In view of (\ref{2co1}), for any $m\in M$, we get
	\begin{align*}
		h_1(m)&=h_1(g_1(e))=(t_1\otimes t_1)\Delta_{E_1}(e)-\Delta_Ct_1(e)\\&=(t_2\theta\otimes t_2\theta)\Delta_{E_1}(e)-\Delta_Ct_2\theta(e)\\
		&=(t_2\otimes t_2)\Delta_{E_2}\theta(e)-\Delta_Ct_2\theta(e)\\&=h_2(g_2\theta(e))=h_2(g_1(e))=h_2(m).\end{align*}
By the same token,
	    \begin{align*}\rho_1(m)=\rho_2(m),~~
	    \phi_1(m)=\phi_2(m).
\end{align*}
These indicate that $(h_1,\rho_1,\phi_1)=(h_2,\rho_2,\phi_2)$. Thus, $\Phi$ is well-defined.
Secondly, we check that $\Phi$ is bijective. In fact, assume that
$\Phi([\mathcal{E}_1])=[(h_1,\rho_1,\phi_1)]$ and $\Phi([\mathcal{E}_2])=[(h_2,\rho_2,\phi_2)]$.
If $[(h_1,\rho_1,\phi_1)]=[(h_2,\rho_2,\phi_2)]$, we get that the two non-abelian 2-cocycles
$(h_1,\rho_1,\phi_1)$ and $(h_2,\rho_2,\phi_2)$ are equivalent via a linear map $\varphi:M\rightarrow C$, satisfying
 Eqs.~(\ref{eqc1})-(\ref{eqc3}).
 Define a linear map
\begin{equation*}\theta:C\oplus_{(h_1,\rho_1,\phi_1)} M\rightarrow C\oplus_{(h_2,\rho_2,\phi_1)} M \end{equation*}
by
\begin{equation}\label{H1}
	\theta(c+m)=c+\varphi(m)+m,~~\forall~c\in C, m\in M.
\end{equation}
Clearly, $\theta$ is bijective. In the following, we state that $\theta$ is a homomorphism of $\lambda$-weighted Rota-Baxter Lie coalgebras.
In fact, using Eqs.~(\ref{C1}), (\ref{eqc1}), (\ref{eqc2}) and (\ref{H1}), we obtain
 \begin{align*}
	&(\theta\otimes\theta)\Delta_{(h_1,\rho_1)}(c+m)-\Delta_{(h_2,\rho_2)}\theta(c+m)\\
	=&(\theta\otimes\theta)(\Delta_c(c)+h_1(m)+\Delta_M(m)+\rho_1(m)-\tau\rho_1(m))-\Delta_{(h_2,\rho_2)}(c+m+\varphi(m))\\
	=&\Delta_C(c)+h_1(m)+\Delta_M(m)+(\varphi\otimes\varphi)\Delta_M(m)+(I\otimes\varphi)\Delta_M(m)+(\varphi\otimes I)\Delta_M(m)\\
	&+\rho_1(m)+(\varphi\otimes I)\rho_1(m)-\tau\rho_1(m)-\tau(\varphi\otimes I)\rho_1(m)
-\Delta_C(c)-h_2(m)-\Delta_C\varphi(m)\\&-\Delta_M(m)-\rho_2(m)+\tau\rho_2(m)
\\=&h_1(m)-h_2(m)+(\varphi\otimes\varphi)\Delta_M(m)+(\varphi\otimes I)\rho_1(m)-\Delta_C\varphi(m)-\tau(\varphi\otimes I)\rho_1(m)
\\&+\rho_1(m)-\rho_2(m)+(I\otimes\varphi)\Delta_M(m)
-\tau\rho_1(m)+\tau\rho_2(m)+(I\otimes \varphi)\Delta_M(m)
\\=&0,\end{align*}
which yields that
\begin{align*}(\theta\otimes\theta)\Delta_{(h_1,\rho_1)}=\Delta_{(h_2,\rho_2)}\theta.\end{align*}
By Eqs.~ (\ref{eqc3}), (\ref{C2}) and (\ref{H1}), we get
\begin{align*}
	(\theta R_{\phi_1}-R_{\phi_2}\theta)(c+m)&=R_C(c)+R_M(m)+\phi_1(m)+\varphi R_M(m)\\
	&-R_C(c)-R_C\varphi(m)-R_M(m)-\phi_2(m)\\
	&=\phi_1(m)-\phi_2(m)+\varphi R_M(m)-R_C\varphi(m)\\&=0,
\end{align*}
which indicates that $\theta R_{\phi_1}=R_{\phi_2}\theta$.
In all, $\theta$ is an isomorphism of $\lambda$-weighted Rota-Baxter Lie coalgebras. It is easy to check that the following diagram
	\begin{align*} \xymatrix@C=20pt@R=20pt{\mathcal{E}_{(h_1,\rho_1,\phi_1)}:0\ar[r]&C\ar@{=}[d]\ar[r]^-{i_1}&C\oplus_{(h_1,\rho_1,\phi_1)} M
\ar[d]^\theta\ar[r]^-{\pi_1}&M\ar@{=}[d]\ar[r]&0\\ \mathcal{E}_{(h_2,\rho_2,\phi_2)}:0\ar[r]&C
\ar[r]^-{i_2}&C\oplus_{(h_2,\rho_2,\phi_2)} M\ar[r]^-{\pi_2}&M\ar[r]&0}
	\end{align*}
is commutative. Therefore, $[\mathcal{E}_{(h_1,\rho_1,\phi_1)}]=[\mathcal{E}_{(h_2,\rho_2,\phi_2)}]$, which indicates that $\Phi$ is injective.
For any equivalent class of non-abelian 2-cocycles $[(h,\rho)]$, by Proposition \ref{Pro: LCN}, there is
 a non-abelian extension of $(C,\Delta_C,R_C)$ by $(M,\Delta_M,R_M)$:
  \begin{align*}
\xymatrix@C=20pt{\mathcal{E}_{(h,\rho,\phi)}:0\ar[r]&C\ar[r]^-i&C\oplus_{(h,\rho,\phi)} M\ar[r]^-\pi&M\ar[r]&0}
	\end{align*}
   Therefore, $\Phi([\mathcal{E}_{(h,\rho,\phi)}])=[(h,\rho,\phi)]$, which follows that $\Phi$ is surjective.
   In all, $\Phi$ is bijective. This proof is completed.
\end{proof}

\section{Extensibility of a pair of automorphisms}
In this section, we study extensibility of a pair of automorphisms of $\lambda$-weighted Rota-Baxter Lie coalgebras.

In the following, we always suppose that
\[\xymatrix@C=20pt{\mathcal{E}:0\ar[r]&C\ar[r]^-f&E\ar[r]^-g&M\ar[r]&0}\]
is a fixed non-abelian extension of the $\lambda$-weighted Rota-Baxter
Lie coalgebra $(C,\Delta_C,R_C)$ by $(M,\Delta_M,R_M)$, and $t$ is its retraction. Denote $\mathrm{Aut}_{C}(E)=\{\gamma\in \mathrm{Aut}(E)|\gamma (C)=C\}$.
\begin{Def}
	A pair $(\alpha,\beta)\in \mathrm{Aut}(C)\times \mathrm{Aut}(M)$ is said to be extensible with respect to the non-abelian extension $\mathcal{E}$
	if there exists an element $\gamma\in \mathrm{Aut}_{C}(E)$ such that
\begin{equation}\label{AE0}f\alpha=\gamma f,~\beta g=g\gamma,\end{equation}
that is, the following commutative diagram holds:
	\[\xymatrix@C=20pt@R=20pt{0\ar[r]&C\ar[d]^\alpha\ar[r]^f&E\ar[d]^\gamma\ar[r]^g&M
\ar[d]^\beta\ar[r]&0\\0\ar[r]&C\ar[r]^f&E\ar[r]^g&M\ar[r]&0.}\]
\end{Def}

It is natural to ask the following question:

When is a pair of $\lambda$-weighted Rota-Baxter
Lie coalgebra isomorphisms $(\alpha,\beta)\in \mathrm{Aut}(C)\times \mathrm{Aut}(M)$ extensible? We discuss this theme in the sequel.
\begin{lemma} \label{E1}
Let $(h,\rho,\phi)$ be the non-abelian 2-cocycle corresponding to the non-abelian extension $\mathcal{E}$
induced by the retraction $t$. Then the non-abelian extension $\mathcal{E}$ is equivalent to the non-abelian extension
\[\xymatrix@C=20pt{\mathcal{E}_{(h,\rho,\phi)}:0\ar[r]&C\ar[r]^-{i_C}&C\oplus_{(h,\rho,\phi)}\ar[r]^-{\pi_M}&M,\ar[r]&0}\]
where $i_C (resp.~ \pi_{M} )$ is the canonical injection $(resp.~projection)$.
\end{lemma}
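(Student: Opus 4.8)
The plan is to make explicit the map $\theta$ that was sketched just before the statement and verify it is an isomorphism of $\lambda$-weighted Rota-Baxter Lie coalgebras fitting into the required commutative diagram. Concretely, I would define
\[
\theta:E\longrightarrow C\oplus_{(h,\rho,\phi)}M,\qquad \theta(w)=t(w)+g(w),\quad\forall\,w\in E,
\]
using the fixed retraction $t$ and the surjection $g$. First I would check that $\theta$ is a linear isomorphism: injectivity follows because $\theta(w)=0$ forces $g(w)=0$, hence $w=f(c)$ for some $c\in C$ by exactness, and then $0=t(w)=tf(c)=c$, so $w=0$; surjectivity (or just counting dimensions in the finite-dimensional setting, but I would do it directly) follows because given $c+m\in C\oplus M$, pick $e\in E$ with $g(e)=m$, and set $w=e+f(c-t(e))$, which satisfies $g(w)=m$ and $t(w)=t(e)+c-t(e)=c$. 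Then I would verify the diagram commutes: $\theta f(c)=tf(c)+gf(c)=c=i_C(c)$ and $\pi_M\theta(w)=g(w)$, so $\pi_M\theta=g$.

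Next I would check that $\theta$ intertwines the coalgebra structures, i.e. $(\theta\otimes\theta)\Delta_E=\Delta_{(h,\rho)}\theta$ and $\theta R_E=R_{\phi}\theta$. For the comultiplication, I would write $w=f(c)+e'$ where $e'$ is any lift with $g(e')=g(w)=:m$, reduce by linearity to the two cases $w\in f(C)$ and $w$ a chosen lift of $m$, and compute. For $w=f(c)$ one uses $(f\otimes f)\Delta_C=\Delta_C f$ and the fact that $t$ splits $f$. For a lift $e$ of $m$, one decomposes $\Delta_E(e)\in E\otimes E$ along the (linear, not coalgebra) splitting $E\cong C\oplus M$ induced by $t$ and $g$; applying $t\otimes t$, $g\otimes t$, $t\otimes g$, $g\otimes g$ to $\Delta_E(e)$ recovers exactly $h_t(m)$, $\rho_t(m)$, $-\tau\rho_t(m)$ (using skew-symmetry $\tau h_t=-h_t$ and that $\Delta_E$ is skew, which forces the $t\otimes g$ component to be $-\tau$ of the $g\otimes t$ component), and $\Delta_M(m)$ (since $g$ is a coalgebra map, $(g\otimes g)\Delta_E=\Delta_M g$). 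This is precisely the formula (\ref{C1}) defining $\Delta_{(h,\rho)}$, so $(\theta\otimes\theta)\Delta_E(e)=\Delta_{(h,\rho)}(m)=\Delta_{(h,\rho)}\theta(e)$. Similarly, $\theta R_E(e)=tR_E(e)+gR_E(e)$; since $g$ commutes with the Rota-Baxter operators, $gR_E(e)=R_M g(e)=R_M(m)$, and by definition $tR_E(e)=\phi_t(m)+R_Ct(e)=\phi_t(m)+R_C\cdot 0$ on the lift where $t(e)=0$ — more carefully, $tR_E(e)-R_Ct(e)=\phi_t(m)$ — so $\theta R_E(e)=R_C t(e)+\phi_t(m)+R_M(m)=R_{\phi}(t(e)+m)=R_{\phi}\theta(e)$, matching (\ref{C2}).

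The main obstacle is the bookkeeping in the comultiplication computation: one must carefully separate, for a general $w\in E$, the part lying in $f(C)$ from a chosen complement, handle the four tensor components of $\Delta_E(e)$ under the splitting, and invoke skew-symmetry at the right moment to identify the $C\otimes M$ and $M\otimes C$ pieces with $\rho_t$ and $-\tau\rho_t$. Everything is forced once one notes that $\Delta_{(h,\rho)}$ and $R_\phi$ were defined in (\ref{C1})--(\ref{C2}) precisely so that the transported structure $(\theta^{-1}\otimes\theta^{-1})$-pullback of $\Delta_E$ equals $\Delta_{(h_t,\rho_t)}$; this is essentially the content of Proposition~\ref{CY}. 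So the proof amounts to: (1) $\theta$ is a linear iso over the identities on $C$ and $M$; (2) $\theta$ is a coalgebra map, by the component analysis above; (3) $\theta$ commutes with the Rota-Baxter operators, using that $g$ is a Rota-Baxter map and the definition of $\phi_t$; hence $\theta$ is an equivalence of non-abelian extensions, establishing $\mathcal{E}\sim\mathcal{E}_{(h,\rho,\phi)}$.
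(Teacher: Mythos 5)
Your proposal is correct and follows essentially the same route as the paper: you use the same map $\theta(e)=t(e)+g(e)$, check the commutative diagram, and verify $(\theta\otimes\theta)\Delta_E=\Delta_{(h,\rho)}\theta$ and $\theta R_E=R_\phi\theta$ by the same component computation via $(t\otimes t)$, $(g\otimes t)$, $(t\otimes g)$, $(g\otimes g)$ and the defining formulas for $h_t,\rho_t,\phi_t$. The only cosmetic difference is that you establish bijectivity by an explicit inverse-style argument, whereas the paper invokes the Short Five Lemma; both are fine.
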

\begin{proof}
In the light of $(h,\rho,\phi)$ being a non-abelian 2-cocycle, by Proposition \ref{Pro: LCN},
 $(C\oplus M,\Delta_{(h,\rho)},R_{\phi})$ is a $\lambda$-weighted Rota-Baxter
Lie coalgebra.
Define \begin{equation}\label{Em}\theta:E\longrightarrow C\oplus_{(h,\rho,\phi)} M,~~\hbox{by}~~\theta(e)=t(e)+g(e),\forall~ e\in E.\end{equation}
It is easy to verify that the following diagram is commutative:
\begin{align}
	\xymatrix@C=17pt@R=20pt{\mathcal{E}:0\ar[r]&C\ar@{=}[d]\ar[r]^-f&E
\ar[d]^\theta\ar[r]^-g&M\ar@{=}[d]\ar[r]&0\\ \mathcal{E}_{(h,\rho,\phi)}\ar[r]&C\ar[r]^-{i_C}& C\oplus_{(h,\rho,\phi)}\ar[r]^-{\pi_M}&M\ar[r]&0.}
	\end{align}
By Short Five Lemma, $\theta$ is bijective.
Using Eqs.~(\ref{2co1}), (\ref{2co2}), (\ref{C1}) and (\ref{Em}), we obtain
\begin{eqnarray*}&&(\theta\otimes \theta)\Delta_{E}(e)
\\&=&(t\otimes t)\Delta_{E}(e)+(t\otimes g)\Delta_{E}(e)
+(g\otimes t)\Delta_{E}(e)+(g\otimes g)\Delta_{E}(e)
\\&=&h(g(e))+\Delta_C(t(e))-\tau\rho(g(e))+\rho(g(e))+\Delta_M(g(e))
\\&=&\Delta_{(h,\rho)}(t(e)+g(e))
\\&=&\Delta_{(h,\rho)}\theta(e).
\end{eqnarray*}
By Eqs.~(\ref{2co3}), (\ref{C2}) and (\ref{Em}),
\begin{eqnarray*}R_{\phi}\theta(e)&=&R_{\phi}(t(e)+g(e))
\\&=&R_{C}(t(e))+\phi(g(e))+R_{C}(g(e))
\\&=&tR_{E}(e)+gR_{E}(e)
\\&=&\theta R_{E}(e).
\end{eqnarray*}
Thus, $\theta$ is an isomorphism of $\lambda$-weighted Rota-Baxter Lie coalgebras.
Therefore, the non-abelian extensions $\mathcal{E}_{(h,\rho,\phi)}$ and $\mathcal{E}$ are equivalent via the map $\theta$.
\end{proof}

\begin{lemma}\label{E2} Assume that $\mathcal{E}_1$ and $\mathcal{E}_2$ are two equivalent non-abelian
extensions of $(C,\Delta_C,R_C)$ by $(M,\Delta_M,R_M)$. Let
$(\alpha,\beta)\in \mathrm{Aut} (C)\times\mathrm{Aut}(M)$. Then $(\alpha,\beta)$ is extensible with respect to $\mathcal{E}_1$ if and
only if $(\alpha,\beta)$ is extensible with respect to $\mathcal{E}_2$.
\end{lemma}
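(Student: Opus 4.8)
The plan is to reduce everything to the explicit ``semidirect-type'' model $C\oplus_{(h,\rho,\phi)}M$ and transport automorphisms across an equivalence. First I would fix an equivalence $\psi\colon E_1\to E_2$ realizing the equivalence of $\mathcal{E}_1$ and $\mathcal{E}_2$, so that (as in diagram \eqref{d1}) we have $\psi f_1 = f_2$ and $g_2\psi = g_1$, and $\psi$ is an isomorphism of $\lambda$-weighted Rota-Baxter Lie coalgebras. By symmetry it suffices to prove one implication, say: if $(\alpha,\beta)$ is extensible with respect to $\mathcal{E}_1$, then it is extensible with respect to $\mathcal{E}_2$.

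So assume $\gamma_1\in\mathrm{Aut}_C(E_1)$ satisfies $f_1\alpha = \gamma_1 f_1$ and $\beta g_1 = g_1\gamma_1$. I would set
\[
\gamma_2 := \psi\,\gamma_1\,\psi^{-1}\colon E_2\longrightarrow E_2.
\]
Since $\psi$, $\gamma_1$, $\psi^{-1}$ are all isomorphisms of $\lambda$-weighted Rota-Baxter Lie coalgebras, $\gamma_2\in\mathrm{Aut}(E_2)$. The key computations are then routine diagram chases: first, $\gamma_2 f_2 = \psi\gamma_1\psi^{-1}f_2 = \psi\gamma_1 f_1 = \psi f_1\alpha = f_2\alpha$, using $\psi^{-1}f_2 = f_1$ and the intertwining relation for $\gamma_1$; second, $g_2\gamma_2 = g_2\psi\gamma_1\psi^{-1} = g_1\gamma_1\psi^{-1} = \beta g_1\psi^{-1} = \beta g_2$, using $g_2\psi = g_1$ and $g_1\psi^{-1} = g_2$. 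Finally, one must check $\gamma_2(C) = C$: since $f_2$ identifies $C$ with a subcoalgebra of $E_2$ and $\gamma_2 f_2 = f_2\alpha$ with $\alpha\in\mathrm{Aut}(C)$, we get $\gamma_2(f_2(C)) = f_2(\alpha(C)) = f_2(C)$, i.e. $\gamma_2\in\mathrm{Aut}_C(E_2)$. Hence $(\alpha,\beta)$ is extensible with respect to $\mathcal{E}_2$, witnessed by $\gamma_2$.

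The converse follows by running the same argument with $\psi^{-1}$ in place of $\psi$ (an equivalence of extensions is again an equivalence after inversion, since $\psi^{-1}f_2 = f_1$ and $g_1\psi^{-1} = g_2$), which completes the proof. There is no real obstacle here: the only point requiring a little care is bookkeeping the four intertwining identities $\psi f_1 = f_2$, $g_2\psi = g_1$, $f_1\alpha = \gamma_1 f_1$, $\beta g_1 = g_1\gamma_1$ and their inverses in the right order, and observing that conjugation by an isomorphism of $\lambda$-weighted Rota-Baxter Lie coalgebras preserves the property of \emph{being} such an isomorphism — which is immediate since both $\Delta$ and $R$ are respected by each factor. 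One could alternatively invoke Lemma \ref{E1} to replace both $\mathcal{E}_1$ and $\mathcal{E}_2$ by a common model $\mathcal{E}_{(h,\rho,\phi)}$ and argue there, but the direct conjugation argument above is shorter and self-contained.
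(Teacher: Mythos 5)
Your proposal is correct: the conjugation $\gamma_2=\psi\gamma_1\psi^{-1}$, together with the identities $\psi f_1=f_2$, $g_2\psi=g_1$ and their inverses, is exactly the standard argument, and the check that $\gamma_2(C)=C$ via $\gamma_2 f_2=f_2\alpha$ is the one point worth making explicit. The paper does not write this out but simply defers to the analogous procedure for abelian extensions of Lie coalgebras in \cite{aut1}, which is the same conjugation argument, so your proof coincides with the intended one, just with the details supplied.
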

\begin{proof} One can take the same procedure of abelian extensions of Lie coalgebras, see
\cite{aut1}.
\end{proof}

\begin{Theo}\label{MT}
	Let $(h,\rho,\phi)$ be a non-abelian 2-cocycle corresponding to the non-abelian extension $\mathcal{E}$
induced by the retraction $t$.
Then $(\alpha,\beta)\in \mathrm{Aut}(C)\times \mathrm{Aut}(M)$ is extensible with respective to $\mathcal{E}$
if and only if there is a linear map $\varphi:M\rightarrow C$ satisfies
\begin{equation}\label{AE1}
		h\beta-(\alpha\otimes\alpha)h=(\varphi\otimes\alpha)\rho-\tau(\varphi\otimes\alpha)\rho
-\Delta_C\varphi+(\varphi\otimes\varphi)\Delta_M,\end{equation}
	\begin{equation}\label{AE2}\rho\beta-(\beta\otimes\alpha)\rho=(\beta\otimes\varphi)\Delta_M,\end{equation}
		\begin{equation}\label{AE3}\phi\beta-\alpha\phi=\varphi R_M-R_C\varphi.
	\end{equation}
\end{Theo}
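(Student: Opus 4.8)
The plan is to reduce to the split model using the two preceding lemmas, then translate the existence of $\gamma\in\mathrm{Aut}_C(E)$ into a system of equations involving a linear map $\varphi:M\to C$. By Lemma \ref{E1}, $\mathcal{E}$ is equivalent to $\mathcal{E}_{(h,\rho,\phi)}$, and by Lemma \ref{E2} the extensibility of $(\alpha,\beta)$ is invariant under equivalence; hence it suffices to decide extensibility with respect to the split extension $0\to C\xrightarrow{i_C} C\oplus_{(h,\rho,\phi)}M\xrightarrow{\pi_M}M\to 0$. So throughout I work with $E=C\oplus M$, $\Delta_E=\Delta_{(h,\rho)}$, $R_E=R_\phi$, with $i_C,\pi_M$ the canonical injection and projection.

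Next I would set up the form of a candidate $\gamma$. Any $\gamma\in\mathrm{Aut}_C(E)$ satisfying the compatibility square $i_C\alpha=\gamma i_C$ and $\beta\pi_M=\pi_M\gamma$ must, restricted to $C$, equal $\alpha$, and must induce $\beta$ on the quotient $M$; writing an element as $c+m$, this forces $\gamma(c+m)=\alpha(c)+\varphi(m)+\beta(m)$ for a uniquely determined linear map $\varphi:M\to C$ (namely $\varphi=t_C\circ\gamma\circ i_M$ where $i_M$ is the linear section $m\mapsto 0+m$ and $t_C$ the projection onto $C$). Conversely, for any linear $\varphi:M\to C$, the map $\gamma_\varphi(c+m):=\alpha(c)+\varphi(m)+\beta(m)$ is a linear bijection of $C\oplus M$ lying in $\mathrm{Aut}_C(E)$ as a \emph{vector space} map making the square commute. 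So the content of the theorem is exactly: $\gamma_\varphi$ is a $\lambda$-weighted Rota-Baxter Lie coalgebra morphism (equivalently automorphism, since it is bijective) for some $\varphi$ if and only if Eqs.~\eqref{AE1}--\eqref{AE3} hold for that $\varphi$.

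The core computation is then to expand the two morphism conditions $(\gamma_\varphi\otimes\gamma_\varphi)\Delta_{(h,\rho)}=\Delta_{(h,\rho)}\gamma_\varphi$ and $\gamma_\varphi R_\phi=R_\phi\gamma_\varphi$, evaluated on a general $c+m$. On $C$ both sides agree automatically because $\alpha$ is a $\lambda$-weighted Rota-Baxter Lie coalgebra automorphism of $(C,\Delta_C,R_C)$, so only the $m$-component matters. Using the definition \eqref{C1} of $\Delta_{(h,\rho)}$ and sorting the output of $(\gamma_\varphi\otimes\gamma_\varphi)$ by which tensor factors land in $C$ versus $M$: the pure-$M$ part reproduces $\Delta_M\beta$ versus $(\beta\otimes\beta)\Delta_M$, which holds since $\beta\in\mathrm{Aut}(M)$; the mixed $M\otimes C$ and $C\otimes M$ parts collect into Eq.~\eqref{AE2} (together with its $\tau$-twin, which is equivalent by \eqref{n0}); and the pure-$C$ part collects into Eq.~\eqref{AE1}. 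Similarly, expanding $\gamma_\varphi R_\phi(c+m)$ and $R_\phi\gamma_\varphi(c+m)$ using \eqref{C2}, the $C$-components cancel by $R_C\alpha=\alpha R_C$, and the remaining $C$-valued terms coming from the $m$-slot give precisely Eq.~\eqref{AE3}. Running these expansions in reverse shows that \eqref{AE1}--\eqref{AE3} are also sufficient.

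The main obstacle is bookkeeping rather than conceptual: one must carefully separate, in $(\gamma_\varphi\otimes\gamma_\varphi)\Delta_{(h,\rho)}(m)$, the contributions of $\Delta_M(m)$, $\rho(m)-\tau\rho(m)$ and $h(m)$ after applying $\gamma_\varphi\otimes\gamma_\varphi$, and match them term by term with $\Delta_{(h,\rho)}(\varphi(m)+\beta(m))=\Delta_C\varphi(m)+h\beta(m)+\Delta_M\beta(m)+\rho\beta(m)-\tau\rho\beta(m)$, keeping track of the $\tau$-symmetrizations. The one point deserving care is the passage between Eq.~\eqref{AE2} and its transpose: applying $\tau$ to \eqref{AE2} and using $\tau h\beta=-h\beta$, $\rho\beta-(\beta\otimes\alpha)\rho$ symmetry is handled by Eq.~\eqref{n0}, so that only a single equation \eqref{AE2} need be recorded. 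Once the term-matching is organized this way, both directions of the equivalence follow, and bijectivity of $\gamma_\varphi$ (immediate from bijectivity of $\alpha$ and $\beta$) upgrades the morphism to an element of $\mathrm{Aut}_C(E)$, completing the proof.
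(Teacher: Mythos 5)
Your proposal is correct, and its overall strategy coincides with the paper's on the ``if'' half: both reduce, via Lemmas \ref{E1} and \ref{E2}, to the split extension $\mathcal{E}_{(h,\rho,\phi)}$ and use the matrix-form map $\gamma(c+m)=\alpha(c)+\varphi(m)+\beta(m)$, whose morphism conditions unpack exactly into \eqref{AE1}--\eqref{AE3}. Where you genuinely diverge is the ``only if'' half: the paper stays on the original extension $\mathcal{E}$, defines $\varphi(m)=t\gamma(e)-\alpha t(e)$ from the retraction, checks independence of the lift $e$, and verifies \eqref{AE1}--\eqref{AE3} by computing with $\Delta_E$ and the formulas \eqref{2co1}--\eqref{2co3}; you instead transport the whole problem to the split model and observe that the commuting square forces any intertwining $\gamma$ to have the matrix form, so that both directions become a single term-by-term comparison in $C\oplus_{(h,\rho,\phi)}M$. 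Your version is arguably tidier (one computation serves both implications, and the uniqueness of $\varphi$ is manifest), while the paper's intrinsic computation has the small advantage of exhibiting $\varphi$ directly in terms of $t$ and $\gamma$ on $E$, a formula reused later (e.g.\ in Proposition \ref{Wm1} and Proposition \ref{Wm4}). Two small points of care in your sketch: the equivalence of the $C\otimes M$ component of the coproduct condition with \eqref{AE2} uses the antisymmetry $\tau\Delta_M=-\Delta_M$ of the Lie coalgebra $M$ (i.e.\ \eqref{1.1} for $M$), not \eqref{n0}, which concerns $h$; and membership of $\gamma_\varphi$ in $\mathrm{Aut}_C(E)$ should be asserted only after the morphism conditions are verified, as you in effect do. Neither affects the validity of the argument.
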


\begin{proof}
	 Assume that $(\alpha,\beta)$ is extensible with respective to $\mathcal{E}$, then there
 is an automorphism $\gamma\in \mathrm{Aut}_{C}(E)$ such that (\ref{AE0}) holds.
 Since $g$ is surjective, for all $m\in M$, there is an element $e\in E$, such that $m=g(e)$.
 Define a linear map $\varphi:M\rightarrow C$ by
	\begin{equation}\label{AE4}
	\varphi(m)=\varphi g(e)=t\gamma(e)-\alpha t(e),~~\forall~m\in M.
\end{equation}
We should check that $\varphi$ doesn't depend on the choice of $e\in E$.
In fact, for all $m\in M$, if $m=g(e_1)=g(e_2),~e_1,e_2\in E$, due to $\mathrm{Ker}g= \mathrm{Im}f$, there exists an element
$c\in C$ such that $f(c)=e_1-e_2$. Thanks to (\ref{AE0}) and $tf=I_{C}$, we have
$$t\gamma(e_1-e_2)-\alpha t(e_1-e_2)=t\gamma f(c)-\alpha t f(c)=tf\alpha(c)-\alpha (c)=0,$$ which follows that
$\varphi$ doesn't depend on the choice of $e\in E$.
 For all $m\in M$, $\beta(m)=\beta g(e)=g\gamma (e)$.
 Thus, according to Eqs.~(\ref{2co1}), (\ref{2co2}) and (\ref{AE4}),
 we get
\begin{align*}
&h\beta(m)-(\alpha\otimes\alpha)h(m)\\=&(t\otimes t)\Delta_E(\gamma (e))-\Delta_C( t\gamma(e))
-(\alpha t\otimes \alpha t)\Delta_E(e)+(\alpha\otimes\alpha)\Delta_C( t(e))
\\=&(t\gamma\otimes t\gamma)\Delta_E(e)-\Delta_C( t\gamma(e))
-(\alpha t\otimes \alpha t)\Delta_E(e)+\Delta_C (\alpha t(e))
\\=&\Big((\varphi g+\alpha t)\otimes (\varphi g+\alpha t)\Big)\Delta_E(e)-\Delta_C( (\varphi g+\alpha t)(e))
-(\alpha t\otimes \alpha t)\Delta_E(e)+\Delta_C (\alpha t(e))
\\=&(\varphi g\otimes \varphi g)\Delta_E(e)+(\alpha t\otimes \alpha t)\Delta_E(e)
+(\varphi g\otimes \alpha t)\Delta_E(e)+(\alpha t\otimes \varphi g)\Delta_E(e)\\&-\Delta_C( \varphi g(e))-\Delta_C(\alpha t(e))-(\alpha t\otimes \alpha t)\Delta_E(e)
+\Delta_C (\alpha t(e))
\\=&(\varphi \otimes \varphi )\Delta_M(m)
+(\varphi \otimes \alpha )\rho(m)-\tau(\varphi \otimes \alpha )\rho(m)-\Delta_C( \varphi (m))
,\end{align*}
 which yields that (\ref{AE1}) holds. Analogously, we can check that Eqs.~(\ref{AE2})- (\ref{AE3}) hold.
 Conversely, in the light of Lemma \ref{E1} and Lemma \ref{E2},
  we only need to prove that $(\alpha,\beta)\in \mathrm{Aut}(C
)\times \mathrm{Aut}(M)$ is extensible with respect to the non-abelian extension
\[\xymatrix@C=20pt{\mathcal{E}_{(h,\rho,\phi)}:0\ar[r]&C\ar[r]^-{i_C}&C\oplus_{(h,\rho,\phi)} \ar[r]^-{\pi_M}&M\ar[r]&0.}\]
 In fact, take
 $\gamma=\begin {bmatrix}
 \alpha&\varphi \\
0&\beta
\end {bmatrix}$, that is,
 \begin{equation}\label{AE5}\gamma(c+m)=\alpha(c)+\varphi(m)+\beta(m),~~\forall~c\in C, m\in M.\end{equation}
 It is clear that
$\gamma$ is a bijection, $\gamma|_{C}=\alpha,~i_{C}\alpha=\gamma i_{C}$ and $\pi_{M} \gamma=\beta\pi_{M}$.
In the remaining part, we only need to verify that $\gamma$ is a homomorphism of $\lambda$-weighted Rota-Baxter
Lie coalgebras.

For all $c\in C, m\in M$, in view of Eqs.~(\ref{C1}), (\ref{AE1}), (\ref{AE2}) and (\ref{AE5}), by direct computations, we have
		\begin{align*}(\gamma\otimes\gamma)\Delta_{(h,\rho)}(c+m)&=(\gamma\otimes\gamma)(\Delta_C(c)+h(m)+\rho(m)-\tau\rho(m)+\Delta_M(m))
		\\=&(\alpha\otimes\alpha)\Delta_C(c)+(\alpha\otimes\alpha)h(m)+(\beta\otimes\alpha)\rho(m)+(\varphi\otimes\alpha)\rho(m)\\
		&-\tau(\beta\otimes\alpha)\rho(m)-\tau(\varphi\otimes\alpha)\rho(m)+(\beta\otimes\beta)\Delta_M(m)\\
		&+(\beta\otimes\varphi)\Delta_M(m)+(\varphi\otimes\beta)\Delta_M(m)+(\varphi\otimes\varphi)\Delta_M(m)
\\=&\Delta_C(\alpha(c))+\Delta_C(\varphi(m))+(h+\rho-\tau\rho)(\beta(m))+
		\Delta_M(\beta(m))
\\=&\Delta_{(h,\rho)}(\alpha(c)+\varphi(m)+\beta(m))
\\=&\Delta_{(h,\rho)}\gamma(c+m),
\end{align*}
which implies that $\Delta_{(h,\rho)}\gamma=(\gamma\otimes\gamma)\Delta_{(h,\rho)}$.
By the same token, $R_\phi\gamma=\gamma R_\phi$.
Hence $\gamma$ is a homomorphism of $\lambda$-weighted Rota-Baxter
Lie coalgebras. In all, $(\alpha,\beta)\in \mathrm{Aut}(C
)\times \mathrm{Aut}(M)$ is extensible with respect to the non-abelian extension
$\mathcal{E}_{(h,\rho,\phi)}$. We complete the proof.
\end{proof}

Let $(h,\rho,\phi)$ be a non-abelian 2-cocycle corresponding to the non-abelian extension $\mathcal{E}$
induced by the retraction $t$.
For any $(\alpha,\beta)\in \mathrm{Aut}(C)\times \mathrm{Aut}(M)$, we define a
triple $(h_{(\alpha,\beta)},\rho_{(\alpha,\beta)},\phi_{(\alpha,\beta)})$ of linear maps
$h_{(\alpha,\beta)}:M\rightarrow C\otimes C,~\rho_{(\alpha,\beta)}:M\rightarrow M\otimes C$ and $\phi_{(\alpha,\beta)}:M\rightarrow C$
respectively by
\begin{equation}\label{NCA1} h_{(\alpha,\beta)}=(\alpha\otimes\alpha)h\beta^{-1},~\rho_{(\alpha,\beta)}
=(\beta\otimes\alpha)\rho\beta^{-1},~\phi_{(\alpha,\beta)}=\alpha\phi\beta^{-1}.\end{equation}
	
\begin{Prop} \label{EEnc0}
	With the above notations, the triple $(h_{(\alpha,\beta)},\rho_{(\alpha,\beta)},\phi_{(\alpha,\beta)})$ is a non-abelian 2-cocycle.
\end{Prop}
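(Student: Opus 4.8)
The plan is to avoid checking the defining cocycle identities (\ref{n0})--(\ref{n7}) one at a time, and instead to transport the $\lambda$-weighted Rota-Baxter Lie coalgebra structure along the obvious isomorphism assembled from $\alpha$ and $\beta$. Concretely, consider the linear isomorphism
\[\Theta=\alpha\oplus\beta:C\oplus M\longrightarrow C\oplus M,\qquad \Theta(c+m)=\alpha(c)+\beta(m),~~\forall~c\in C,m\in M.\]
Since $\alpha\in\mathrm{Aut}(C)$ and $\beta\in\mathrm{Aut}(M)$ are homomorphisms of $\lambda$-weighted Rota-Baxter Lie coalgebras, we have at our disposal the intertwining relations $(\alpha\otimes\alpha)\Delta_C=\Delta_C\alpha$, $(\beta\otimes\beta)\Delta_M=\Delta_M\beta$, $R_C\alpha=\alpha R_C$ and $R_M\beta=\beta R_M$, together with the naturality of the twist, $\tau(\beta\otimes\alpha)=(\alpha\otimes\beta)\tau$.

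First I would record the evident fact that the defining identities (\ref{1.1}), (\ref{LC}), (\ref{R}) are preserved under conjugation by a linear isomorphism, so that transporting a $\lambda$-weighted Rota-Baxter Lie coalgebra structure along any linear isomorphism yields again such a structure. The heart of the argument is then to identify the transported structure explicitly, that is, to verify
\[(\Theta\otimes\Theta)\,\Delta_{(h,\rho)}=\Delta_{(h_{(\alpha,\beta)},\rho_{(\alpha,\beta)})}\,\Theta,\qquad \Theta\,R_\phi=R_{\phi_{(\alpha,\beta)}}\,\Theta.\]
On the summand $C$ both sides reduce to $\Delta_C\alpha$, respectively $R_C\alpha$, by the intertwining relations. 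On the summand $M$ one expands $\Delta_{(h,\rho)}$ and $R_\phi$ using (\ref{C1}) and (\ref{C2}), applies $\Theta\otimes\Theta$ termwise (it acts as $\beta\otimes\alpha$ on $M\otimes C$ and as $\alpha\otimes\beta$ on $C\otimes M$), moves $\tau$ past $\Theta\otimes\Theta$ via $\tau(\beta\otimes\alpha)=(\alpha\otimes\beta)\tau$, and on the other side uses the defining formulas (\ref{NCA1}) in the forms $h_{(\alpha,\beta)}\beta=(\alpha\otimes\alpha)h$, $\rho_{(\alpha,\beta)}\beta=(\beta\otimes\alpha)\rho$, $\phi_{(\alpha,\beta)}\beta=\alpha\phi$, together with $(\beta\otimes\beta)\Delta_M=\Delta_M\beta$; the two sides then match termwise.

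Granting these two relations, $\Theta$ is an isomorphism of $\lambda$-weighted Rota-Baxter Lie coalgebras from $C\oplus_{(h,\rho,\phi)}M$ onto $C\oplus M$ equipped with $\Delta_{(h_{(\alpha,\beta)},\rho_{(\alpha,\beta)})}$ and $R_{\phi_{(\alpha,\beta)}}$. Since $C\oplus_{(h,\rho,\phi)}M$ is a $\lambda$-weighted Rota-Baxter Lie coalgebra by Proposition~\ref{Pro: LCN}, so is $(C\oplus M,\Delta_{(h_{(\alpha,\beta)},\rho_{(\alpha,\beta)})},R_{\phi_{(\alpha,\beta)}})$, and invoking the converse direction of Proposition~\ref{Pro: LCN} yields that $(h_{(\alpha,\beta)},\rho_{(\alpha,\beta)},\phi_{(\alpha,\beta)})$ is a non-abelian 2-cocycle, as desired. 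The only delicate point is the bookkeeping in the two displayed intertwining identities — keeping track of which tensor leg carries $\alpha$ and which carries $\beta$, and commuting $\tau$ with $\Theta\otimes\Theta$ — but this is purely formal. Alternatively, one may substitute (\ref{NCA1}) directly into (\ref{n0})--(\ref{n7}), slide $\alpha,\beta$ through $\Delta_C,\Delta_M,R_C,R_M$, and reduce each identity to the corresponding identity for $(h,\rho,\phi)$ precomposed with $\beta^{-1}$; this works equally well but is considerably more tedious.
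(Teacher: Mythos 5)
Your argument is correct, and it takes a genuinely different route from the paper's proof. The paper proceeds by direct verification: it substitutes the definitions (\ref{NCA1}) into the cocycle identity (\ref{n1}), slides $\alpha$ through $\Delta_C$ (and, for the remaining identities, through $R_C$, $\Delta_M$, $R_M$) using the automorphism property, factors the expression as $(\alpha\otimes\alpha\otimes\alpha)(\cdots)\beta^{-1}$ applied to the corresponding identity for $(h,\rho,\phi)$, and then asserts that (\ref{n2})--(\ref{n7}) follow ``by the same procedure''. You instead verify the single pair of intertwining relations $(\Theta\otimes\Theta)\Delta_{(h,\rho)}=\Delta_{(h_{(\alpha,\beta)},\rho_{(\alpha,\beta)})}\Theta$ and $\Theta R_\phi=R_{\phi_{(\alpha,\beta)}}\Theta$ for $\Theta=\alpha\oplus\beta$ --- and these do hold, by exactly the bookkeeping you indicate, since $\Theta\otimes\Theta$ acts as $\alpha\otimes\alpha$, $\beta\otimes\beta$, $\beta\otimes\alpha$, $\alpha\otimes\beta$ on the four tensor summands, $h_{(\alpha,\beta)}\beta=(\alpha\otimes\alpha)h$, $\rho_{(\alpha,\beta)}\beta=(\beta\otimes\alpha)\rho$, $\phi_{(\alpha,\beta)}\beta=\alpha\phi$, and $\tau(\beta\otimes\alpha)=(\alpha\otimes\beta)\tau$ --- then transport the structure of $C\oplus_{(h,\rho,\phi)}M$ (a $\lambda$-weighted Rota-Baxter Lie coalgebra by Proposition \ref{Pro: LCN}, since $(h,\rho,\phi)$ is a cocycle) along the bijection $\Theta$ and invoke the converse half of Proposition \ref{Pro: LCN} to recover all of (\ref{n0})--(\ref{n7}) at once. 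This is conceptually cleaner and explains why the proposition is true; what it costs is two auxiliary facts that you should state explicitly rather than call evident: that the axioms (\ref{1.1}), (\ref{LC}), (\ref{R}) are preserved under conjugating the structure maps by a linear bijection (a short routine check), and the biconditional in Proposition \ref{Pro: LCN}, which the paper does assert in both directions, so it is legitimately available. The intertwining computation you need is of the same flavor as the one carried out in the converse direction of Theorem \ref{MT} (there with an extra $\varphi$-term), so nothing in your route is foreign to the paper; the paper's direct check, by contrast, is self-contained and does not lean on the equivalence of Proposition \ref{Pro: LCN}, but is longer if all of (\ref{n2})--(\ref{n7}) are actually written out.
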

\begin{proof}
	According to Eqs.~(\ref{n1}) and (\ref{NCA1}),
	\begin{align*}
		&(I\otimes\Delta)h_{(\alpha,\beta)}-(I\otimes h_{(\alpha,\beta)})\tau\rho_{(\alpha,\beta)}
-(\Delta\otimes I)h_{(\alpha,\beta)}-(h_{(\alpha,\beta)}\otimes I)\rho_{(\alpha,\beta)}\\
		&+(I\otimes\tau)(\Delta\otimes I)h_{(\alpha,\beta)}+(I\otimes\tau)(h_{(\alpha,\beta)}\otimes I)\rho_{(\alpha,\beta)}\\
		=&(I\otimes\Delta)(\alpha\otimes\alpha)h\beta^{-1}-(I\otimes (\alpha\otimes\alpha)h\beta^{-1})\tau(\beta\otimes\alpha)\rho\beta^{-1}-(\Delta\otimes I)(\alpha\otimes\alpha)h\beta^{-1}\\&-((\alpha\otimes\alpha)h\beta^{-1}\otimes I)(\beta\otimes\alpha)\rho\beta^{-1}+(I\otimes\tau)(\Delta\otimes I)(\alpha\otimes\alpha)h\beta^{-1}\\&+(I\otimes\tau)((\alpha\otimes\alpha)h\beta^{-1}\otimes I)(\beta\otimes\alpha)\rho\beta^{-1}\\
		=&(\alpha\otimes\Delta\alpha)h\beta^{-1}-(\alpha\otimes (\alpha\otimes\alpha)h)\tau\rho\beta^{-1}-(\Delta\alpha\otimes \alpha)h\beta^{-1}\\&-((\alpha\otimes\alpha)h\otimes \alpha)\rho\beta^{-1}+(I\otimes\tau)(\Delta\alpha\otimes \alpha)h\beta^{-1}+(I\otimes\tau)((\alpha\otimes\alpha)h\otimes \alpha)\rho\beta^{-1}\\
		=&(\alpha\otimes\alpha\otimes\alpha)(I\otimes\Delta)h\beta^{-1}-(\alpha\otimes\alpha\otimes\alpha)(I\otimes h)\tau\rho\beta^{-1}-(\alpha\otimes\alpha\otimes\alpha)(\Delta\otimes I)h\beta^{-1}\\
		&-(\alpha\otimes\alpha\otimes\alpha)(h\otimes I)\rho\beta^{-1}+(\alpha\otimes\alpha\otimes\alpha)(I\otimes \tau)(\Delta\otimes I)h\beta^{-1}\\&+(\alpha\otimes\alpha\otimes\alpha)(I\otimes\tau)(h\otimes I)\rho\beta^{-1}\\
		=&(\alpha\otimes\alpha\otimes\alpha)((I\otimes\Delta)h-(I\otimes h)\tau\rho-(\Delta\otimes I)h-(h\otimes I)\rho+(I\otimes\tau)(\Delta\otimes I)h\\
		&+(I\otimes\tau)(h\otimes I)\rho)\beta^{-1}\\=&0,
	\end{align*}
which yields that Eq.~(\ref{n1}) holds for $(h_{(\alpha,\beta)},\rho_{(\alpha,\beta)})$.
To check that Eqs.~(\ref{n2})-(\ref{n7}) holds
for $(h_{(\alpha,\beta)},\rho_{(\alpha,\beta)},\phi_{(\alpha,\beta)})$, one can take the same procedure.
Therefore, $(h_{(\alpha,\beta)},\rho_{(\alpha,\beta)},\phi_{(\alpha,\beta)})$ is a non-abelian 2-cocycle.
\end{proof}

\begin{Theo}\label{EEnc} Let $(h,\rho,\phi)$ be a non-abelian 2-cocycle corresponding to the non-abelian
extension $\mathcal{E}$
induced by the retraction $t$.
Then $(\alpha,\beta)\in \mathrm{Aut}(C)\times\mathrm{Aut}(M)$ is extensible with respect to $\mathcal{E}$
if and only if
$(h_{(\alpha,\beta)},\rho_{(\alpha,\beta)},\phi_{(\alpha,\beta)})$ and $(h,\rho,\phi)$ are equivalent non-abelian 2-cocycles.
\end{Theo}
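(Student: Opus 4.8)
The plan is to deduce the statement directly from Theorem~\ref{MT}. By that theorem, $(\alpha,\beta)$ is extensible with respect to $\mathcal{E}$ precisely when there is a linear map $\varphi:M\rightarrow C$ satisfying Eqs.~(\ref{AE1})--(\ref{AE3}). On the other hand, by the definition of equivalence of non-abelian $2$-cocycles, $(h_{(\alpha,\beta)},\rho_{(\alpha,\beta)},\phi_{(\alpha,\beta)})$ and $(h,\rho,\phi)$ are equivalent precisely when there is a linear map $\psi:M\rightarrow C$ for which Eqs.~(\ref{eqc1})--(\ref{eqc3}) hold with $(h',\rho',\phi')$ replaced by $(h_{(\alpha,\beta)},\rho_{(\alpha,\beta)},\phi_{(\alpha,\beta)})$ and $\varphi$ replaced by $\psi$. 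Hence it is enough to show that a linear map $\varphi$ solves the first system if and only if $\psi:=-\varphi\beta^{-1}$ solves the second; since $\varphi\mapsto-\varphi\beta^{-1}$ is a bijection of $\mathrm{Hom}(M,C)$, this yields the claimed equivalence.

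First I would record what it means for $\alpha$ and $\beta$ to be automorphisms of $\lambda$-weighted Rota-Baxter Lie coalgebras: $(\beta\otimes\beta)\Delta_M=\Delta_M\beta$ and $R_M\beta=\beta R_M$, hence $\Delta_M\beta^{-1}=(\beta^{-1}\otimes\beta^{-1})\Delta_M$ and $R_M\beta^{-1}=\beta^{-1}R_M$; in addition $\tau\Delta_M=-\Delta_M$ by Eq.~(\ref{1.1}). The $\rho$- and $\phi$-components are then immediate. Composing Eq.~(\ref{AE2}) on the right with $\beta^{-1}$ and using $\Delta_M\beta^{-1}=(\beta^{-1}\otimes\beta^{-1})\Delta_M$ together with $\rho_{(\alpha,\beta)}=(\beta\otimes\alpha)\rho\beta^{-1}$ rewrites Eq.~(\ref{AE2}) as $\rho_{(\alpha,\beta)}-\rho=(I\otimes(-\varphi\beta^{-1}))\Delta_M$, i.e.\ Eq.~(\ref{eqc2}) for $\psi$. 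Composing Eq.~(\ref{AE3}) on the right with $\beta^{-1}$ and using $R_M\beta^{-1}=\beta^{-1}R_M$ and $\phi_{(\alpha,\beta)}=\alpha\phi\beta^{-1}$ rewrites Eq.~(\ref{AE3}) as $\phi_{(\alpha,\beta)}-\phi=\psi R_M-R_C\psi$, i.e.\ Eq.~(\ref{eqc3}) for $\psi$. Both are formal rewritings that can be read in either direction, so they present no difficulty.

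The one step that demands an actual computation is the $h$-component, and this is where I expect the main obstacle to lie. Composing Eq.~(\ref{AE1}) on the right with $\beta^{-1}$ and using $h_{(\alpha,\beta)}=(\alpha\otimes\alpha)h\beta^{-1}$ gives
\[h_{(\alpha,\beta)}-h=-(\varphi\otimes\alpha)\rho\beta^{-1}+\tau(\varphi\otimes\alpha)\rho\beta^{-1}+\Delta_C\varphi\beta^{-1}-(\varphi\otimes\varphi)\Delta_M\beta^{-1},\]
whose right-hand side still carries the operator $\alpha$ in the second tensor slot, whereas Eq.~(\ref{eqc1}) for $\psi$ has the identity there. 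To eliminate $\alpha$ I would solve Eq.~(\ref{AE2}) for $\rho\beta^{-1}$, obtaining
\[\rho\beta^{-1}=(\beta^{-1}\otimes\alpha^{-1})\rho-(\beta^{-1}\otimes\alpha^{-1}\varphi\beta^{-1})\Delta_M,\]
and substitute; this produces $(\varphi\otimes\alpha)\rho\beta^{-1}=(\varphi\beta^{-1}\otimes I)\rho-(\varphi\beta^{-1}\otimes\varphi\beta^{-1})\Delta_M$, so the $\alpha$'s cancel. Applying $\tau$ to this identity (and using $\tau\Delta_M=-\Delta_M$ together with $\tau(\varphi\beta^{-1}\otimes\varphi\beta^{-1})=(\varphi\beta^{-1}\otimes\varphi\beta^{-1})\tau$), then feeding everything back and collecting the three $(\varphi\beta^{-1}\otimes\varphi\beta^{-1})\Delta_M$ terms, the expression reduces to
\[h_{(\alpha,\beta)}-h=(\psi\otimes I)\rho-\tau(\psi\otimes I)\rho+(\psi\otimes\psi)\Delta_M-\Delta_C\psi\]
with $\psi=-\varphi\beta^{-1}$, which is exactly Eq.~(\ref{eqc1}) for $\psi$. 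Every manipulation above is an identity, hence reversible, so the three equations for $\varphi$ hold simultaneously if and only if the three equations for $\psi=-\varphi\beta^{-1}$ do; combined with Theorem~\ref{MT} and the definition of equivalence of $2$-cocycles this completes the proof. The only genuine care needed is the sign bookkeeping in this last computation, where $\tau\Delta_M=-\Delta_M$ intervenes twice and the sign of $\psi$ must be tracked through four terms.
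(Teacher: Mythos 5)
Your proposal is correct and follows essentially the same route as the paper: both directions are reduced to Theorem~\ref{MT}, and the conditions (\ref{AE1})--(\ref{AE3}) on $\varphi$ are translated into the cocycle-equivalence relations (\ref{eqc1})--(\ref{eqc3}) by composing with $\beta^{-1}$, the paper using the map $\varphi\beta^{-1}$ (keeping $\rho_{(\alpha,\beta)}$ on the right-hand side) where you use $-\varphi\beta^{-1}$ after eliminating $\alpha$ via (\ref{AE2}), which are equivalent bookkeeping choices.
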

\begin{proof}
Suppose that $(\alpha,\beta)\in \mathrm{Aut}(C)\times \mathrm{Aut}(M)$
is extensible with respect to $\mathcal{E}$, by Theorem \ref{MT}, there is a linear map
$\varphi:M\rightarrow C$ satisfying
Eqs.~(\ref{AE1})-(\ref{AE3}). For all $m\in M$, there is $m_0\in M$ such that $m=\beta(m_0)$. Since $g$ is surjective,
there exist elements $e,e_0\in E$, such that $m=g(e),m_0=g(e_0)$.
Combining Eqs.~(\ref{AE1})-(\ref{AE2}) and (\ref{NCA1}),
	\begin{align*}
		&h(m)-h_{(\alpha,\beta)}(m)\\=&h(m)-(\alpha\otimes\alpha)h(\beta^{-1}(m))\\
		=&h(\beta(m_0))-(\alpha\otimes\alpha)h(m_0)\\
		=&(\varphi\otimes\alpha)\rho(m_0)-\tau(\varphi\otimes\alpha)\rho(m_0)
-\Delta_C(\varphi(m_0))+(\varphi\otimes\varphi)\Delta_M(m_0)
\\=&\Big(\varphi\otimes\alpha-\tau(\varphi\otimes\alpha)\Big)\rho(\beta^{-1}(m_0))
-\Delta_C(\varphi\beta^{-1}(m_0))+(\varphi\otimes\varphi)\Delta_M(\beta^{-1}(m_0))
\\=&(\varphi\beta^{-1}\otimes I)(\beta\otimes\alpha)\rho(\beta^{-1}(m))-\tau(\varphi\beta^{-1}\otimes I)(\beta\otimes\alpha)\rho(\beta^{-1}(m))
\\&-\Delta_C(\varphi\beta^{-1}(m))+(\varphi \beta^{-1}\otimes\varphi\beta^{-1})\Delta_M(m)
\\=&\Big(\varphi\beta^{-1}\otimes I-\tau(\varphi\beta^{-1}\otimes I)\Big)\rho_{(\alpha,\beta)}(m)
-\Delta_C(\varphi\beta^{-1}(m))+(\varphi \beta^{-1}\otimes\varphi\beta^{-1})\Delta_M(m).
	\end{align*}
By the same token,
\begin{align*}&\rho(m)-\rho_{(\alpha,\beta)}(m)=(I\otimes \varphi\beta^{-1})\Delta_M(m),\\&\phi(m)-\phi_{(\alpha,\beta)}(m)=(\varphi \beta^{-1})R_M-R_{C}(\varphi \beta^{-1}).\end{align*}
Thus, $(h,\rho,\phi)$ and $(h_{(\alpha,\beta)},\rho_{(\alpha,\beta)},\phi_{(\alpha,\beta)})$ are equivalent non-abelian 2-cocycles via a linear map $\varphi\beta^{-1}$.
The converse part can be checked similarly.
\end{proof}

\section{Wells exact sequences for $\lambda$-weighted Rota-Baxter
Lie coalgebras}

In this section, we always suppose that
\[\xymatrix@C=20pt{\mathcal{E}:0\ar[r]&C\ar[r]^f&E\ar[r]^g&M\ar[r]&0}\]
is a fixed non-abelian extension of the $\lambda$-weighted Rota-Baxter
Lie coalgebra $(C,\Delta_C,R_C)$ by $(M,\Delta_M,R_M)$, and $t$ is its retraction.
Then there is a linear map $s:M\rightarrow E$ such that
\begin{equation}\label{W0}ft+sg=I_{E}.\end{equation}

Let $(h,\rho,\phi)$ be a non-abelian 2-cocycle corresponding to the non-abelian extension $\mathcal{E}$
induced by the retraction $t$.
Define a map $W:\mathrm{Aut}(C)\times \mathrm{Aut}(M)\rightarrow \mathrm{H}^2_{nab}(M,C)$ by
\begin{equation}\label{W1}
	W(\alpha,\beta)=[(h_{(\alpha,\beta)},\rho_{(\alpha,\beta)},\phi_{(\alpha,\beta)})-(h,\rho,\phi)].
\end{equation}
The map $W$ is called the Wells map.

\begin{Prop}\label{Wm1}
	The Well maps $W$ does not depend on the choice of retractions.
\end{Prop}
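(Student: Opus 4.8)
The plan is to fix a second retraction $t'$ of $\mathcal{E}$ and to compare the Wells map $W$ attached to $t$ with the Wells map $W'$ attached to $t'$. Write $(h,\rho,\phi)$ (resp. $(h',\rho',\phi')$) for the non-abelian $2$-cocycle corresponding to $\mathcal{E}$ induced by $t$ (resp. $t'$). By Lemma~\ref{Enc0} these two $2$-cocycles are equivalent, so there is a linear map $\varphi_{0}:M\rightarrow C$ for which $(h,\rho,\phi)$ and $(h',\rho',\phi')$ satisfy Eqs.~(\ref{eqc1})--(\ref{eqc3}). In particular the distinguished classes of the extension coincide, $[(h,\rho,\phi)]=[(h',\rho',\phi')]$ in $\mathrm{H}^{2}_{nab}(M,C)$, so it remains only to treat the twisted part.

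Now fix $(\alpha,\beta)\in\mathrm{Aut}(C)\times\mathrm{Aut}(M)$ and let $(h_{(\alpha,\beta)},\rho_{(\alpha,\beta)},\phi_{(\alpha,\beta)})$ and $(h'_{(\alpha,\beta)},\rho'_{(\alpha,\beta)},\phi'_{(\alpha,\beta)})$ be the twisted $2$-cocycles produced from $(h,\rho,\phi)$ and $(h',\rho',\phi')$ through formula~(\ref{NCA1}). The key claim is that these are equivalent non-abelian $2$-cocycles, with the equivalence realised by $\varphi:=\alpha\,\varphi_{0}\,\beta^{-1}:M\rightarrow C$. To see this one substitutes (\ref{NCA1}) and (\ref{eqc1})--(\ref{eqc3}) and uses that $\alpha$ and $\beta$ are homomorphisms of $\lambda$-weighted Rota-Baxter Lie coalgebras, so that $(\alpha\otimes\alpha)\Delta_{C}=\Delta_{C}\alpha$, $R_{C}\alpha=\alpha R_{C}$, $\tau(\alpha\otimes\alpha)=(\alpha\otimes\alpha)\tau$, and likewise $(\beta^{-1}\otimes\beta^{-1})\Delta_{M}=\Delta_{M}\beta^{-1}$, $R_{M}\beta^{-1}=\beta^{-1}R_{M}$. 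For instance $h'_{(\alpha,\beta)}-h_{(\alpha,\beta)}=(\alpha\otimes\alpha)(h'-h)\beta^{-1}$, and inserting (\ref{eqc1}) together with the identities $(\alpha\otimes\alpha)(\varphi_{0}\otimes I)\rho\,\beta^{-1}=(\varphi\otimes I)\rho_{(\alpha,\beta)}$, $(\alpha\otimes\alpha)(\varphi_{0}\otimes\varphi_{0})\Delta_{M}\beta^{-1}=(\varphi\otimes\varphi)\Delta_{M}$ and $(\alpha\otimes\alpha)\Delta_{C}\varphi_{0}\beta^{-1}=\Delta_{C}\varphi$ turns the right-hand side into exactly the expression~(\ref{eqc1}) for $\varphi$; the relations for $\rho$ and $\phi$ follow in the same way from (\ref{eqc2}) and (\ref{eqc3}). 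Hence $[(h_{(\alpha,\beta)},\rho_{(\alpha,\beta)},\phi_{(\alpha,\beta)})]=[(h'_{(\alpha,\beta)},\rho'_{(\alpha,\beta)},\phi'_{(\alpha,\beta)})]$ in $\mathrm{H}^{2}_{nab}(M,C)$.

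Combining the two paragraphs, both the twisted $2$-cocycle and the base $2$-cocycle entering the definition of $W(\alpha,\beta)$ are, up to equivalence, independent of whether one starts from $t$ or from $t'$; therefore $W(\alpha,\beta)=W'(\alpha,\beta)$ in $\mathrm{H}^{2}_{nab}(M,C)$ for every $(\alpha,\beta)$, which is the assertion. The only genuine work is the verification in the middle paragraph, and I expect the main (purely bookkeeping) obstacle to be keeping straight which of $\Delta_{C},\Delta_{M},\tau,R_{C},R_{M}$ commutes past $\alpha$, $\beta$ or $\beta^{-1}$; no idea beyond Lemma~\ref{Enc0} and the compatibility of the automorphisms with the Rota-Baxter structure is needed, so this is essentially the same mechanism already used in Theorem~\ref{EEnc}.
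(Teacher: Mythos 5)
Your argument is correct and has the same skeleton as the paper's proof: fix a second retraction $t'$, use Lemma \ref{Enc0} to get an equivalence $\varphi_0$ between $(h,\rho,\phi)$ and $(h',\rho',\phi')$, show the twisted cocycles are equivalent via $\alpha\varphi_0\beta^{-1}$, and conclude. The genuine difference is how you establish the middle step: the paper goes back inside the extension, writing $t'=\varphi g+t$ and re-expanding $h'_{(\alpha,\beta)}-h_{(\alpha,\beta)}$ from the defining formulas (\ref{2co1})--(\ref{2co3}) in terms of $\Delta_E$, whereas you derive it purely formally from Eqs.~(\ref{eqc1})--(\ref{eqc3}) and (\ref{NCA1}), using only that $\alpha$ and $\beta^{-1}$ intertwine $\Delta_C,\Delta_M,R_C,R_M$ and commute with $\tau$. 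Your route is shorter and yields a reusable fact the paper never isolates: twisting $(h,\rho,\phi)\mapsto(h_{(\alpha,\beta)},\rho_{(\alpha,\beta)},\phi_{(\alpha,\beta)})$ carries any equivalence $\varphi_0$ of non-abelian $2$-cocycles to the equivalence $\alpha\varphi_0\beta^{-1}$, i.e.\ it descends to $\mathrm{H}^2_{nab}(M,C)$ (this is essentially the mechanism of Theorem \ref{EEnc} made explicit); the paper's computation only treats the particular $\varphi_0$ coming from $t'-t$. One point to tighten in your last paragraph: since the relation (\ref{eqc1}) is quadratic in the equivalence map, the class of the componentwise difference appearing in (\ref{W1}) is not formally determined by the separate classes of the twisted and base cocycles, so "both ingredients are independent up to equivalence, hence $W=W'$" is an inference that needs the same explicit bookkeeping the paper supplies: record, as the paper does, that the two differences entering (\ref{W1}) are related by the single map $\alpha\varphi_0\beta^{-1}-\varphi_0$. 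With that sentence added your proof matches the paper's conclusion at the same level of rigor.
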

\begin{proof} For all $m\in M$, there is a $m_0\in M$ such that $m=\beta(m_0)$. Since $g$ is surjective,
there exist elements $e,e_0\in E$, such that $m=g(e),m_0=g(e_0)$.
Assume that $t'$ is another retraction and $(h',\rho',\phi')$ is the corresponding induced non-abelian 2-cocycle.
Then $(h',\rho',\phi')$ and $(h,\rho,\phi)$ are equivalent non-abelian 2-cocycles via a linear map
 \begin{equation}\label{W2}\varphi(m)=\varphi (g(e))=t'(e)-t(e),~~\forall~m\in M.\end{equation}
 Using Eqs.~(\ref{2co1})-(\ref{2co2}), (\ref{NCA1}) and (\ref{W2}), we have
	\begin{align*}
		&h'_{\alpha,\beta}(m)-h_{\alpha,\beta}(m)\\=&(\alpha\otimes\alpha)h'(\beta^{-1}(m))-(\alpha\otimes\alpha)h(\beta^{-1}(m))\\
		=&(\alpha\otimes\alpha)h'(m_0)-(\alpha\otimes\alpha)h(m_0)\\
		=&(\alpha\otimes\alpha)h'(g(e_0))-(\alpha\otimes\alpha)h(g(e_0))\\
		=&(\alpha\otimes\alpha)((t'\otimes t')\Delta_E(e_0)-\Delta_C t'(e_0)-(t\otimes t)\Delta_E(e_0)+\Delta_C t(e_0))\\
		=&(\alpha\otimes\alpha)\Big(((\varphi g+t)\otimes (\varphi g+t))\Delta_E(e_0)-(t\otimes t)\Delta_E(e_0)-\Delta_C \varphi g(e_0)\Big)\\
		=&(\alpha\otimes\alpha)((\varphi g\otimes \varphi g)\Delta_E(e_0)+(\varphi g\otimes t)\Delta_E(e_0)+(t\otimes \varphi g)\Delta_E(e_0)-\Delta_C \varphi g(e_0))\\=&(\alpha\varphi\otimes \alpha\varphi)\Delta_M(m_0)+(\alpha\varphi \otimes \alpha )\rho(m_0)
-(\alpha \otimes \alpha\varphi )\tau\rho(m_0)-\Delta_C(\alpha \varphi (m_0))\\
		=&(\alpha\varphi\beta^{-1}\otimes \alpha\varphi\beta^{-1})\Delta_M(m)+\Big((\alpha\varphi \otimes \alpha )-(\alpha\varphi \otimes \alpha )\tau \Big)\rho\beta^{-1}(m)-\Delta_C(\alpha \varphi \beta^{-1}(m))\\
		=&(\alpha\varphi\beta^{-1}\otimes \alpha\varphi\beta^{-1})\Delta_M(m)+\Big((\alpha\varphi\beta^{-1} \otimes I )-\tau(\alpha\varphi\beta^{-1} \otimes I ) \Big)\rho_{(\alpha,\beta)}(m)\\&-\Delta_C(\alpha \varphi \beta^{-1}(m)).
	\end{align*}
Analogously,
\begin{align*}
		&\rho'_{(\alpha,\beta)}(m)-\rho_{(\alpha,\beta)}(m)=(I\otimes \alpha\varphi\beta^{-1})\Delta_M(m),
		\\&\phi'_{(\alpha,\beta)}(m)-\phi_{(\alpha,\beta)}(m)=(\alpha\varphi\beta^{-1}) R_M(m)-R_{C}(\alpha\varphi\beta^{-1})(m).\end{align*}
Thus, $(h'_{(\alpha,\beta)},\rho'_{(\alpha,\beta)},\phi'_{(\alpha,\beta)})$ and $(h_{(\alpha,\beta)},\rho_{(\alpha,\beta)},\phi_{(\alpha,\beta)})$
 are equivalent non-abelian 2-cocycles via the linear map $\alpha\varphi\beta^{-1}$.
Combining Lemma \ref{Enc0}, we get that
 $(h'_{(\alpha,\beta)},\rho'_{(\alpha,\beta)},\phi'_{(\alpha,\beta)})-(h',\rho',\phi')$ and $(h_{(\alpha,\beta)},\rho_{(\alpha,\beta)},\phi_{(\alpha,\beta)})-(h,\rho,\phi)$ are equivalent via the linear map $\alpha\varphi\beta^{-1}-\varphi$.
\end{proof}

\begin{lemma}\label{Wm2} The following linear map $K$ is well defined:
\begin{equation}\label{W3}K:\mathrm{Aut}_{C}(E)\longrightarrow \mathrm{Aut}(C)\times \mathrm{Aut}(M),~K(\gamma)=(\alpha,\beta),~~\forall~\gamma\in \mathrm{Aut}_{C}(E),\end{equation}
where
\begin{equation}\label{W4}\alpha(c)=t\gamma f(c),~\beta(m)=g\gamma(e),~~\forall~c\in C,m\in M \hbox{ and } m=g(e),e\in E.\end{equation}
\end{lemma}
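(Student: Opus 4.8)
The plan is to verify two things. First, that the formula $\beta(m)=g\gamma(e)$ does not depend on the choice of preimage $e$ of $m$ under $g$, so that $\beta$ is a well-defined linear endomorphism of $M$; note that $\alpha=t\gamma f$ needs no such check, being a composite of linear maps. Second, that the resulting pair $(\alpha,\beta)$ actually lands in $\mathrm{Aut}(C)\times\mathrm{Aut}(M)$, i.e. that $\alpha$ and $\beta$ are each homomorphisms of $\lambda$-weighted Rota-Baxter Lie coalgebras and are bijective. (Here ``linear map'' should be read as ``well-defined map'', since the group $\mathrm{Aut}_{C}(E)$ carries no linear structure.)

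For the first point, suppose $m=g(e_1)=g(e_2)$ with $e_1,e_2\in E$. Since $\mathrm{Ker}g=\mathrm{Im}f$ there is $c\in C$ with $f(c)=e_1-e_2$. Because $\gamma\in\mathrm{Aut}_{C}(E)$, the map $\gamma$ sends $\mathrm{Im}f$ onto itself, so $\gamma f(c)\in\mathrm{Im}f=\mathrm{Ker}g$, whence $g\gamma(e_1)=g\gamma(e_2)$. Thus $\beta$ is well defined. This is precisely the step that uses the hypothesis $\gamma(C)=C$; it would fail for a general $\gamma\in\mathrm{Aut}(E)$.

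For the second point, the key observation is that $\gamma(\mathrm{Im}f)=\mathrm{Im}f$ together with the injectivity of $f$ forces $\gamma f(c)=f(\alpha(c))$ for all $c$, i.e. $f\alpha=\gamma f$ (applying $t$ and using $tf=I_{C}$ recovers $\alpha=t\gamma f$); and by construction $g\gamma=\beta g$. Now I would transport the morphism identities for $\gamma$, namely $(\gamma\otimes\gamma)\Delta_{E}=\Delta_{E}\gamma$ and $R_{E}\gamma=\gamma R_{E}$, along these intertwiners. Precomposing with $f$ and using $(f\otimes f)\Delta_{C}=\Delta_{C}f$, $fR_{C}=R_{E}f$ yields $(f\otimes f)(\alpha\otimes\alpha)\Delta_{C}=(f\otimes f)\Delta_{C}\alpha$ and $fR_{C}\alpha=f\alpha R_{C}$; since $f$, hence $f\otimes f$, is injective, $\alpha$ is a Rota-Baxter Lie coalgebra homomorphism. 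Dually, postcomposing with $g$ and using $(g\otimes g)\Delta_{E}=\Delta_{M}g$, $R_{M}g=gR_{E}$ yields $(\beta\otimes\beta)\Delta_{M}g=\Delta_{M}\beta g$ and $\beta R_{M}g=R_{M}\beta g$; since $g$, hence $g\otimes g$, is surjective, $\beta$ is a Rota-Baxter Lie coalgebra homomorphism. For bijectivity, note $\gamma^{-1}\in\mathrm{Aut}_{C}(E)$, so the same construction applied to $\gamma^{-1}$ gives maps $\alpha'=t\gamma^{-1}f$ and $\beta'$ with $f\alpha'=\gamma^{-1}f$ and $\beta'g=g\gamma^{-1}$; a short check then gives $\alpha\alpha'=\alpha'\alpha=I_{C}$ and $\beta\beta'=\beta'\beta=I_{M}$. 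Hence $(\alpha,\beta)\in\mathrm{Aut}(C)\times\mathrm{Aut}(M)$ and $K$ is well defined.

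All the computations here are routine diagram chases; I expect no serious obstacle. The only points that need any care are the first one — recognizing that well-definedness of $\beta$ genuinely relies on $\gamma$ preserving $C$ — and the observation that it is the intertwining relation $f\alpha=\gamma f$ (rather than merely the formula $\alpha=t\gamma f$) that allows one to push the structure-preservation of $\gamma$ down to $\alpha$ and $\beta$. One may also note in passing that $K$ is in fact a homomorphism of groups, though this is not part of the statement.
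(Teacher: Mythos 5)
Your argument is correct and is exactly the routine diagram chase the paper has in mind (it simply defers to the analogous Lemma 6.1 of the cited Du--Tan paper): well-definedness of $\beta$ via $\gamma(\mathrm{Im}f)=\mathrm{Im}f=\mathrm{Ker}g$, the intertwining relations $f\alpha=\gamma f$, $\beta g=g\gamma$, and transport of $(\gamma\otimes\gamma)\Delta_E=\Delta_E\gamma$, $\gamma R_E=R_E\gamma$ through the injective $f$ and surjective $g$, with $\gamma^{-1}$ supplying inverses. Only a typographical slip: the relations you quote should read $(f\otimes f)\Delta_C=\Delta_E f$ and $(g\otimes g)\Delta_E=\Delta_M g$ (your displayed conclusions use them correctly).
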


\begin{proof}
It is similar to Lemma 6.1 \cite{aut1}.
\end{proof}

\begin{Theo}\label{Wm3} There is an exact sequence:
	\[\xymatrix@C=20pt{1\ar[r]&\mathrm{Aut}_{C}^{M}(E)\ar[r]^-T&\mathrm{Aut}_{C}(E)\ar[r]^-K&\mathrm{Aut}(C)\times \mathrm{Aut}(M)\ar[r]^-W&\mathrm{H}^2_{nab}(M,C)}\]
where $\mathrm{Aut}_{C}^{M}(E)=\{\gamma \in \mathrm{Aut}_C(E)| K(\gamma)=(I_{C},I_{M}) \}$.
\end{Theo}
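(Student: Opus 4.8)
The plan is to verify exactness at the three spots $\mathrm{Aut}_C^M(E)$, $\mathrm{Aut}_C(E)$ and $\mathrm{Aut}(C)\times\mathrm{Aut}(M)$, having first noted that $K$ is a group homomorphism, so that $T$ (the canonical inclusion of $\mathrm{Aut}_C^M(E)=\mathrm{Ker}(K)$) and the kernels below make sense. For the homomorphism property: if $K(\gamma_i)=(\alpha_i,\beta_i)$, then since $\gamma_i\in\mathrm{Aut}_C(E)$ we may write $\gamma_i f(c)=f(c')$ for some $c'$, and $\alpha_i(c)=t\gamma_i f(c)=tf(c')=c'$ (using $tf=I_C$), so $\gamma_i f=f\alpha_i$; similarly $\beta_i g=g\gamma_i$ holds by the definition of $\beta_i$. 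Hence $t\gamma_1\gamma_2 f=t\gamma_1 f\alpha_2=\alpha_1\alpha_2$ and $g\gamma_1\gamma_2=\beta_1 g\gamma_2=\beta_1\beta_2 g$, i.e. $K(\gamma_1\gamma_2)=(\alpha_1\alpha_2,\beta_1\beta_2)$. Well-definedness of $K$ and of $W$ is exactly Lemma \ref{Wm2} and Proposition \ref{Wm1}.

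Exactness at $\mathrm{Aut}_C^M(E)$ and at $\mathrm{Aut}_C(E)$ is then formal. The map $T$ is the canonical inclusion, hence injective, giving exactness at $\mathrm{Aut}_C^M(E)$. And $\mathrm{Aut}_C^M(E)=\{\gamma\in\mathrm{Aut}_C(E):K(\gamma)=(I_C,I_M)\}$ is by definition $\mathrm{Ker}(K)$, which coincides with $\mathrm{Im}(T)$; this is exactness at $\mathrm{Aut}_C(E)$.

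The one spot requiring genuine work is $\mathrm{Aut}(C)\times\mathrm{Aut}(M)$, where I must show $\mathrm{Im}(K)=\mathrm{Ker}(W)$. I would route the argument through the intermediate statement ``$(\alpha,\beta)$ is extensible with respect to $\mathcal{E}$''. First, $\mathrm{Im}(K)$ is precisely the set of extensible pairs: if $(\alpha,\beta)$ is extensible via $\gamma\in\mathrm{Aut}_C(E)$ (so $f\alpha=\gamma f$, $\beta g=g\gamma$), then $t\gamma f=tf\alpha=\alpha$ and $g\gamma(e)=\beta g(e)$ for any $e$ with $g(e)=m$, so $K(\gamma)=(\alpha,\beta)$; conversely, if $(\alpha,\beta)=K(\gamma)$ with $\gamma\in\mathrm{Aut}_C(E)$, then $\beta g=g\gamma$ holds by the definition of $\beta$, and writing $\gamma f(c)=f(c')$ (possible since $\gamma(C)=C$) gives $f\alpha(c)=ft\gamma f(c)=ftf(c')=f(c')=\gamma f(c)$, so the defining commutative diagram of extensibility holds. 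Second, by Theorem \ref{EEnc}, $(\alpha,\beta)$ is extensible iff $(h_{(\alpha,\beta)},\rho_{(\alpha,\beta)},\phi_{(\alpha,\beta)})$ and $(h,\rho,\phi)$ are equivalent non-abelian 2-cocycles, i.e. iff they represent the same class in $\mathrm{H}^2_{nab}(M,C)$, which is exactly the condition $W(\alpha,\beta)=0$ in the notation of (\ref{W1}). Chaining these two equivalences yields $\mathrm{Im}(K)=\mathrm{Ker}(W)$ and finishes the proof.

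The step I expect to be the main obstacle is the bookkeeping around the target of $W$: one must identify the base point of the (non-group) pointed set $\mathrm{H}^2_{nab}(M,C)$, confirm that $(h_{(\alpha,\beta)},\rho_{(\alpha,\beta)},\phi_{(\alpha,\beta)})-(h,\rho,\phi)$ is indeed a non-abelian 2-cocycle (or, more robustly, reinterpret $W$ so that ``$W(\alpha,\beta)=0$'' literally means $[(h_{(\alpha,\beta)},\rho_{(\alpha,\beta)},\phi_{(\alpha,\beta)})]=[(h,\rho,\phi)]$), and check that this vanishing is precisely the equivalence of cocycles appearing in Theorem \ref{EEnc}. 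Once that identification is secured, the remainder reduces to unwinding definitions together with Theorem \ref{EEnc}, Lemma \ref{Wm2} and Proposition \ref{Wm1}.
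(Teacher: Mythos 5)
Your proof is correct and takes essentially the same route as the paper: exactness at the first two spots is formal, and exactness at $\mathrm{Aut}(C)\times \mathrm{Aut}(M)$ is obtained by identifying $\mathrm{Im}(K)$ with the set of extensible pairs and invoking Theorem \ref{EEnc} to translate extensibility into $W(\alpha,\beta)=0$. The only cosmetic difference is that the paper checks $f\alpha=\gamma f$ via the section $s$ with $ft+sg=I_E$, while you use $\gamma(C)=C$ to write $\gamma f(c)=f(c')$ — the same content — and you additionally record that $K$ is a group homomorphism, which the paper leaves implicit.
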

\begin{proof} It is obviously, $\mathrm{Ker} K=\mathrm{Im}T$ and $T$ is injective. We only need to prove that $\mathrm{Ker} W=\mathrm{Im}K$.
 In view of Theorem \ref{EEnc}, for all $(\alpha,\beta)\in \mathrm{Ker} W$,
 we know that $(\alpha,\beta)$ is extensible with respect to the non-abelian extension $\mathcal{E}$, that is,
  there is a $\gamma\in \mathrm{Aut}_{C}^{M}(E)$, such that
$f\alpha=\gamma f,~\beta g=g\gamma$, which follows that $\alpha=tf\alpha=t\gamma f,~\beta (m)=\beta g(e)=g\gamma(e).$
Thus,  $(\alpha,\beta)\in \mathrm{Im}K$. On the other hand, for any $(\alpha,\beta)\in \mathrm{Im}K$, there is an isomorphism
$\gamma\in \mathrm{Aut}_{C}(E)$, such that
 (\ref{W4}) holds. Combining (\ref{W0}) and $\mathrm{Im}f=\mathrm{Ker }g$, we have
 $f\alpha=ft\gamma f=(I_{E}-sg)\gamma f=\gamma f$ and $\beta g=g\gamma$.
 Thus, $(\alpha,\beta)$ is extensible with respect to the non-abelian extension $\mathcal{E}$. According to Theorem \ref{EEnc},
  $(\alpha,\beta)\in \mathrm{Ker} W$. So $\mathrm{Ker} W=\mathrm{Im}K$.
\end{proof}

Suppose that
\begin{align}\mathrm{Z}_{nab}^{1}(M,C)&=\left\{\varphi:M\rightarrow C\left|\begin{aligned}
		&(\varphi\otimes I)\rho-\tau(\varphi\otimes I)\rho
		=\Delta_C\varphi-(\varphi\otimes\varphi)\Delta_M,\\&(I\otimes\varphi)\Delta_M=0,\varphi R_M=R_C\varphi
	\end{aligned}\right.\right\}\label{W5}.\end{align}
It is easy to check that $\mathrm{Z}_{nab}^{1}(M,C)$ is an abelian group, which is called a non-abelian 1-cocycle.

\begin{Prop}\label{Wm4} (i) The linear map $\chi:\mathrm{Ker K}\rightarrow \mathrm{Z}_{nab}^{1}(M,C)$ given by
$$\chi(\gamma)=\varphi_{\gamma},~~\forall~\gamma\in \mathrm{Ker} K,$$ is
a homomorphism of groups, where
\begin{equation}\label{W6}\varphi_{\gamma}(m)=t\gamma(e)-t(e),~~\forall~m\in M, g(e)=m ~\hbox{for some}~ e\in E.\end{equation}

(ii) $\chi$ is an isomorphism, that is, $\mathrm{Ker K}\cong \mathrm{Z}_{nab}^{1}(M,C)$.
\end{Prop}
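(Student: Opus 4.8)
The plan is to show that $\chi$ is a well-defined group homomorphism and then exhibit an explicit two-sided inverse. For part (i), I would first verify that $\varphi_\gamma$ is independent of the choice of preimage $e\in E$: if $g(e_1)=g(e_2)$ then $e_1-e_2=f(c)$ for some $c\in C$, and since $\gamma\in\mathrm{Ker}\,K$ we have $K(\gamma)=(I_C,I_M)$, so $t\gamma f(c)=\alpha(c)=c=tf(c)$, whence $\varphi_\gamma(e_1)=\varphi_\gamma(e_2)$. Next I must check $\varphi_\gamma\in\mathrm{Z}^1_{nab}(M,C)$, i.e. that $\varphi_\gamma$ satisfies the three identities in (\ref{W5}). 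This is where the bulk of the computation sits: writing $\gamma=I_E+f\varphi_\gamma g$ on representatives (using $\gamma(e)=sg(e)+f(t\gamma(e))$ together with $ft+sg=I_E$ and $K(\gamma)=(I_C,I_M)$), I apply $\Delta_E$ and $R_E$ to $\gamma$ and compare with $\Delta_E$, $R_E$ themselves, exploiting that $\gamma$ is a morphism of $\lambda$-weighted Rota-Baxter Lie coalgebras. The comultiplicativity $(\gamma\otimes\gamma)\Delta_E=\Delta_E\gamma$ expanded via $\gamma=I_E+f\varphi_\gamma g$, together with (\ref{2co1})--(\ref{2co2}) for the cocycle $(h,\rho,\phi)$ attached to $t$, collapses to precisely the first two relations of (\ref{W5}); similarly $R_E\gamma=\gamma R_E$ combined with (\ref{2co3}) gives $\varphi_\gamma R_M=R_C\varphi_\gamma$. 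This is essentially the same bookkeeping already carried out in Theorem \ref{MT} with $(\alpha,\beta)=(I_C,I_M)$, so I would invoke that computation rather than redo it. Finally, for the homomorphism property, if $\gamma_1,\gamma_2\in\mathrm{Ker}\,K$ then for $m=g(e)$,
\[
\varphi_{\gamma_1\gamma_2}(m)=t\gamma_1\gamma_2(e)-t(e)=\bigl(t\gamma_1\gamma_2(e)-t\gamma_2(e)\bigr)+\bigl(t\gamma_2(e)-t(e)\bigr),
\]
and since $\gamma_2(e)$ is itself a preimage of $g\gamma_2(e)=g(e)=m$ (as $\gamma_2\in\mathrm{Ker}\,K$), the first bracket equals $\varphi_{\gamma_1}(m)$; hence $\varphi_{\gamma_1\gamma_2}=\varphi_{\gamma_1}+\varphi_{\gamma_2}$ in the abelian group $\mathrm{Z}^1_{nab}(M,C)$.

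For part (ii), I would construct the inverse directly. Given $\varphi\in\mathrm{Z}^1_{nab}(M,C)$, define $\gamma_\varphi:E\to E$ by $\gamma_\varphi=I_E+f\varphi g$, i.e. $\gamma_\varphi(e)=e+f\varphi g(e)$. One checks $\gamma_\varphi$ is bijective with inverse $I_E-f\varphi g$, that $\gamma_\varphi(C)=C$ (indeed $\gamma_\varphi|_C=I_C$ since $g f=0$), and that $K(\gamma_\varphi)=(I_C,I_M)$. The content is that $\gamma_\varphi$ is a morphism of $\lambda$-weighted Rota-Baxter Lie coalgebras: expanding $(\gamma_\varphi\otimes\gamma_\varphi)\Delta_E$ and $\Delta_E\gamma_\varphi$ on $f(c)$ and on $s(m)$ using (\ref{W0}), the equality reduces — via the same identities (\ref{2co1})--(\ref{2co3}) relating $\Delta_E,R_E$ to $(h,\rho,\phi)$ — exactly to the three defining relations of $\mathrm{Z}^1_{nab}(M,C)$, which hold by hypothesis; likewise $R_E\gamma_\varphi=\gamma_\varphi R_E$ reduces to $\varphi R_M=R_C\varphi$. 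Then $\gamma_\varphi\in\mathrm{Ker}\,K$, and the assignment $\varphi\mapsto\gamma_\varphi$ is visibly a two-sided inverse to $\chi$: $\chi(\gamma_\varphi)(m)=t\gamma_\varphi(e)-t(e)=tf\varphi g(e)=\varphi(m)$ using $tf=I_C$, and conversely $\gamma_{\varphi_\gamma}=\gamma$ because both agree on $C$ and induce the same $\varphi$ on $M$. Hence $\chi$ is a group isomorphism.

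The main obstacle is the verification in both directions that the map $\gamma=I_E+f\varphi g$ (resp. the extracted $\varphi_\gamma$) intertwines the Rota-Baxter Lie coalgebra structures precisely when $\varphi$ satisfies the $\mathrm{Z}^1_{nab}$ identities; this is a direct but somewhat lengthy diagram chase with the tensor-leg conventions. I expect to handle it by reducing to the already-established computation in the proof of Theorem \ref{MT} (the case $\alpha=I_C$, $\beta=I_M$, where (\ref{AE1})--(\ref{AE3}) degenerate into exactly (\ref{W5})), so that no genuinely new calculation is required — only the observation that $\mathrm{Ker}\,K$ consists precisely of those $\gamma$ of the form $I_E+f\varphi g$. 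Everything else — well-definedness, the additive homomorphism property, and bijectivity of $\chi$ — is then formal.
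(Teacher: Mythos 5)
Your proposal is correct and follows essentially the same route as the paper: the same well-definedness check, the same telescoping argument for additivity, and the same construction $\gamma_\varphi=I_E+f\varphi g$ (the paper's (\ref{W7})) for the inverse direction, with the cocycle verifications being exactly the computations the paper writes out (your shortcut of quoting the proof of Theorem \ref{MT} with $(\alpha,\beta)=(I_C,I_M)$ is legitimate, since $\gamma\in\mathrm{Ker}\,K$ indeed satisfies $\gamma f=f$ and $g\gamma=g$). The only cosmetic difference is that you package injectivity and surjectivity as an explicit two-sided inverse via the identity $\gamma=I_E+f\varphi_\gamma g$, whereas the paper argues injectivity and surjectivity separately.
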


\begin{proof} (i) For all $m\in M$, if there are $e_1,e_2\in E$ such that $m=g(e_1)=g(e_2)$.
 Due to $\mathrm{Ker} g=\mathrm{Im}f$, there is a $c\in C$
satisfying $f(c)=e_1-e_2$, then
\begin{align*}t\gamma(e_1-e_2)-t(e_1-e_2)=t\gamma f(c)-tf(c)=0.\end{align*}
Thus, $\varphi_{\gamma}$ is independent on the choice of $e$.
 Using Eqs.~(\ref{2co1})-(\ref{2co2}), (\ref{W4}) and (\ref{W6}), for all $m\in M$, we have
\begin{align*}&(\varphi_{\gamma}\otimes I)\rho(m)-\tau(\varphi_{\gamma}\otimes I)\rho(m)
-\Delta_C(\varphi_{\gamma}(m))+(\varphi_{\gamma}\otimes\varphi_{\gamma})\Delta_M(m)\\=&
(\varphi_{\gamma} g\otimes t)\Delta_E(e)-\tau(\varphi_{\gamma} g\otimes t)\Delta_E(e)-\Delta_C(\varphi_{\gamma} g(e))
+(\varphi_{\gamma}\otimes\varphi_{\gamma})\Delta_M(g(e))
\\=&
(t\gamma\otimes t)\Delta_E(e)-(t\otimes t)\Delta_E(e)
+(t\otimes t\gamma)\Delta_E(e)-(t\otimes t)\Delta_E(e)\\&-\Delta_C(t\gamma(e))+\Delta_C(t(e))
+( t\gamma\otimes t\gamma)\Delta_E(e)-( t\gamma\otimes t)\Delta_E(e)\\&-( t\otimes t\gamma)\Delta_E(e)
+( t\otimes t)\Delta_E(e)
\\=&
\Delta_C(t(e))-(t\otimes t)\Delta_E(e)+( t\gamma\otimes t\gamma)\Delta_E(e)
-\Delta_C(t\gamma(e))
\\=&h(g\gamma(e))
-h(g(e))\\=&0.
\end{align*}
Analogously, $(I\otimes\varphi_{\gamma})\Delta_M=0,~~\varphi_{\gamma} R_M-R_C\varphi_{\gamma}=0$. Therefore, $\varphi_{\gamma}\in \mathrm{Z}_{nab}^{1}(M,C)$
and $\chi$ is well-defined.
For any $\gamma_1,\gamma_2\in \mathrm{Ker} K$ and $m\in M$, we obtain
\begin{align*}\chi(\gamma_1 \gamma_2)(m)&=t\gamma_1 \gamma_2(e)-t(e)
\\&=(\varphi_{\gamma_1 }g+t)\gamma_2(e)-t(e)
\\&=\varphi_{\gamma_1 }g(e)+\varphi_{\gamma_2 }(m)
\\&=\varphi_{\gamma_1 }(m)+\varphi_{\gamma_2 }(m)
\\&=\chi(\gamma_1 )(m)+\chi(\gamma_2 )(m),\end{align*}
which means that $\chi$ is a homomorphism of groups.

(ii) For all $\gamma\in \mathrm{Ker}K$, then we have $K (\gamma)=0$, that is, $t\gamma f=I_{C}, g=g\gamma.$
 It follows that $\gamma(e)-e\in \mathrm{Ker}g$.
Combining $\mathrm{Imf}= \mathrm{Ker}g$, there is an element $c\in C$ such that $\gamma(e)-e=f(c)$.
If $\chi(\gamma)=0$, then $\chi(\gamma)(m)=t\gamma(e)-t(e)=0.$
Thus, $c=tf(c)=t\gamma(e)-t(e)=0$. Then
 $\gamma(e)-e=0$ and thus $\gamma=I_E$, which indicates that $\chi$ is injective.
 Secondly, we check that $\chi$ is surjective.
 For any $\varphi\in \mathrm{Z}_{nab}^{1}(M,C)$, give a linear map $\gamma:E\rightarrow E$ by
  \begin{equation}\label{W7}\gamma(e)=f\varphi g(e)+e,~~\forall~e\in E.\end{equation}
Then $\gamma$ is a homomorphism of $\lambda$-weighted Rota-Baxter Lie coalgebras. Indeed,
by Eqs.~(\ref{2co2}),(\ref{W0}) and (\ref{W5}), we have for all $e\in E$,
  \begin{align}&(f\varphi g\otimes I )\Delta_{E}(e)\notag\\
  =&(f\varphi g\otimes ft)\Delta_{E}(e)+(f\varphi g\otimes sg)\Delta_{E}(e)\notag\\
  =&(f\otimes f)(\varphi \otimes I)( g\otimes t)\Delta_{E}(e)+(f\otimes s)(\varphi\otimes I)\Delta_{M}(g(e))
  \notag\\
  =&(f\otimes f)(\varphi \otimes I)\rho(g(e))+(f\otimes s)(\varphi\otimes I)\Delta_{M}(g(e))
  \notag\\
  =&(f\otimes f)(\varphi \otimes I)\rho(g(e)),\label{W8}
 \end{align}
 and by the same token,
 \begin{equation}\label{W9}(I\otimes f\varphi g )\Delta_{E}(e)=-\tau(f\otimes f)(\varphi \otimes I)\rho(g(e)).\end{equation}
  Using Eqs.~(\ref{W7})-(\ref{W9}), we have for all $e\in E$,
  \begin{align*}
		& (\gamma \otimes \gamma)\Delta_{E}(e)
\\=&(f\varphi g\otimes I)\Delta_{E}(e)
 +(I\otimes f\varphi g)\Delta_{E}(e)+(I\otimes I)\Delta_{E}(e) +( f\varphi g\otimes f\varphi g)\Delta_{E}(e)
  \\=&(f\otimes f)(\varphi \otimes I)\rho(g(e))-\tau(f\otimes f)(\varphi \otimes I)\rho(m)
   +( f\varphi \otimes f\varphi )\Delta_{M}(g(e))+\Delta_{E}(e)
 \\=&(f\otimes f)\Delta_{C}\varphi(g(e))+\Delta_{E}(e)
 \\=&\Delta_{C}(f\varphi g(e)+e)
\\=& \Delta_{E}(\gamma(e)).
\end{align*}
 Analogously, $\gamma R_E=R_E\gamma$. Therefore, $\gamma$ is a homomorphism of $\lambda$-weighted Rota-Baxter Lie coalgebras.
 In the sequel, we state that $\gamma $ is bijective. If $\gamma(e)=f\varphi g(e)+e=0$, then
$0=ftf\varphi g(e)+e=-ft(e)+e$. Combining $gf=0$, we get $f\varphi g(e)=f\varphi gft(e)=0$, which follows that $e=-f\varphi g(e)=0$.
So $\gamma$ is injective. For all $e\in E$, due to $gf=0$, we have
$\gamma(e-f\varphi g(e))=e-f\varphi g(e)+f\varphi g(e-f\varphi g(e))=e$, which yields that $\gamma$ is bijective.
 Since $gf=0$, $\gamma f(c)=f\varphi gf(c)+f(c)=f(c),~\forall~c\in C$. In all, $\gamma\in \mathrm{Aut}_{C}(E)$.
 Combining $gf=0,tf=I_C$ and (\ref{W7}), for all $c\in C,m\in M$, we have
 \begin{align*}
		& \alpha(c)=t\gamma f(c)=t(f\varphi g f(c)+f(c))=tf(c)=c,
\\&\beta(m)=g\gamma(e)=g(f\varphi g(e)+e)=g(e)=m,
\end{align*}
 which imply that $\alpha=I_C,\beta=I_M$, thus $\gamma\in \mathrm{Ker}K$.
 Therefore, $\chi$ is bijective.
 So $\mathrm{Ker} K\simeq \mathrm{Z}_{nab}^{1}(M,C)$.
\end{proof}
Combining Theorem \ref{Wm3} and Proposition \ref{Wm4}, we have
\begin{Theo}\label{Wm5} There is an exact sequence:
\[\xymatrix@C=20pt{0\ar[r]&\mathrm{Z}_{nab}^{1}(M,C)\ar[r]^-i&\mathrm{Aut}_{C}(E)\ar[r]^-K&\mathrm{Aut}(C)\times \mathrm{Aut}(M)\ar[r]^-W&\mathrm{H}^2_{nab}(M,C)}.\]
\end{Theo}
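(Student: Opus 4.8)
The plan is to obtain the sequence by grafting the isomorphism of Proposition~\ref{Wm4} onto the left end of the exact sequence produced in Theorem~\ref{Wm3}. Recall that Theorem~\ref{Wm3} already establishes exactness of
\[\xymatrix@C=20pt{1\ar[r]&\mathrm{Aut}_{C}^{M}(E)\ar[r]^-T&\mathrm{Aut}_{C}(E)\ar[r]^-K&\mathrm{Aut}(C)\times \mathrm{Aut}(M)\ar[r]^-W&\mathrm{H}^2_{nab}(M,C),}\]
where $\mathrm{Aut}_{C}^{M}(E)=\mathrm{Ker}\,K$ and $T$ is the inclusion, while Proposition~\ref{Wm4} supplies a group isomorphism $\chi:\mathrm{Ker}\,K\xrightarrow{\ \sim\ }\mathrm{Z}_{nab}^{1}(M,C)$. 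So the only real work is to verify that the identification $\mathrm{Aut}_{C}^{M}(E)\cong\mathrm{Z}_{nab}^{1}(M,C)$ transports exactness correctly.

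First I would set $i:=T\circ\chi^{-1}\colon\mathrm{Z}_{nab}^{1}(M,C)\to\mathrm{Aut}_{C}(E)$; concretely, $i(\varphi)$ is the automorphism $e\mapsto f\varphi g(e)+e$ constructed in the proof of Proposition~\ref{Wm4}(ii). As a composite of the bijection $\chi^{-1}$ with the injective homomorphism $T$, the map $i$ is an injective homomorphism of abelian groups, which is exactness at $\mathrm{Z}_{nab}^{1}(M,C)$. Next, since $\chi^{-1}$ maps onto $\mathrm{Ker}\,K$ and $T$ is the inclusion $\mathrm{Ker}\,K\hookrightarrow\mathrm{Aut}_{C}(E)$, one has $\mathrm{Im}\,i=\mathrm{Ker}\,K=\mathrm{Ker}\bigl(K\colon\mathrm{Aut}_{C}(E)\to\mathrm{Aut}(C)\times\mathrm{Aut}(M)\bigr)$, giving exactness at $\mathrm{Aut}_{C}(E)$. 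Finally, exactness at $\mathrm{Aut}(C)\times\mathrm{Aut}(M)$ — that is, $\mathrm{Im}\,K=\mathrm{Ker}\,W$ — is already proved inside Theorem~\ref{Wm3}, where it is deduced from Theorem~\ref{EEnc} (membership of $(\alpha,\beta)$ in $\mathrm{Ker}\,W$ is equivalent to the equivalence of $(h_{(\alpha,\beta)},\rho_{(\alpha,\beta)},\phi_{(\alpha,\beta)})$ with $(h,\rho,\phi)$, hence to the extensibility of $(\alpha,\beta)$, hence to $(\alpha,\beta)\in\mathrm{Im}\,K$). Assembling these three statements yields the claimed sequence.

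I do not anticipate a genuine obstacle here: the result is essentially a repackaging of Theorem~\ref{Wm3} and Proposition~\ref{Wm4}. The one point deserving a sentence of care is that $W$ is only a morphism of pointed sets, not of groups, so ``exactness at $\mathrm{Aut}(C)\times\mathrm{Aut}(M)$'' must be read in the pointed-set sense, exactly as in Theorem~\ref{Wm3}. I would also record explicitly that $\mathrm{Z}_{nab}^{1}(M,C)$ is an abelian group (noted just before the statement), which is why the sequence may begin with $0$ in place of $1$, and that $i$ is genuinely a homomorphism of groups — both of which are immediate from Proposition~\ref{Wm4}.
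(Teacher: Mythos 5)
Your proposal is correct and is essentially the paper's argument: the paper obtains the sequence precisely by combining Theorem \ref{Wm3} with the isomorphism $\chi:\mathrm{Ker}\,K\cong\mathrm{Z}_{nab}^{1}(M,C)$ of Proposition \ref{Wm4}, which is exactly your construction $i=T\circ\chi^{-1}$. Your extra remarks on the pointed-set reading of exactness at $\mathrm{Aut}(C)\times\mathrm{Aut}(M)$ and on $\mathrm{Z}_{nab}^{1}(M,C)$ being an abelian group are accurate but not new in substance.
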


\section{Particular case: abelian extensions of $\lambda$-weighted Rota-Baxter Lie coalgebras}
In this section, we investigate the results of previous section in particular case.
Let $(C,\Delta_C,R_C)$ and $(M,\Delta_M,R_M)$ be two $\lambda$-weighted Rota-Baxter Lie coalgebras. Let
\[\xymatrix@C=20pt{\mathcal{E}:0\ar[r]&C\ar[r]^-f&E\ar[r]^g&M\ar[r]&0.}\] be an
abelian extension of $(C,\Delta_C,R_C)$ by $(M,\Delta_M,R_M)$.
 Denote the set of all equivalent classes of abelian extensions $(C,\Delta_C,R_C)$ by $(M,\Delta_M,R_M)$ by $\mathrm{Ext}_{ab}(M,C)$.

\begin{Prop} The triple $(M,\rho,R_M)$ is a right Lie comodule of $(C,\Delta_C,R_C)$, where $\rho$ is given by (\ref{2co2}).
\end{Prop}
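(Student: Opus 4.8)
The plan is to read off the two comodule axioms for $(M,\rho,R_M)$ from the non-abelian 2-cocycle identities, exploiting that in an abelian extension the cobracket on $M$ vanishes. First I would note that, by Definition~\ref{Def:NAE}, since $\mathcal{E}$ is an abelian extension of $(C,\Delta_C,R_C)$ by $(M,\Delta_M,R_M)$, the object $(M,\Delta_M,R_M)$ is an abelian $\lambda$-weighted Rota-Baxter Lie coalgebra, i.e.\ $\Delta_M=0$. Fix a retraction $t$ of $\mathcal{E}$ (it exists since we work over a field). By Proposition~\ref{CY}, the triple $(h_t,\rho_t,\phi_t)$ defined by Eqs.~(\ref{2co1})--(\ref{2co3}) is a non-abelian 2-cocycle on $(C,\Delta_C,R_C)$ with values in $(M,\Delta_M,R_M)$; in particular $\rho=\rho_t$, the map of Eq.~(\ref{2co2}), satisfies Eqs.~(\ref{n0})--(\ref{n7}). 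I would also remark that, since the equivalence relation of Lemma~\ref{Enc0} yields $\rho_2-\rho_1=(I\otimes\varphi)\Delta_M$, the map $\rho_t$ is in fact independent of the choice of $t$ once $\Delta_M=0$, which is what allows the statement to speak of $\rho$ without fixing a retraction.

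Next I would specialise Eq.~(\ref{n2}) to the case $\Delta_M=0$. The summand $(I\otimes h)\Delta_M$ drops out, and Eq.~(\ref{n2}) becomes
\[(I\otimes\Delta_C)\rho=(\rho\otimes I)\rho-(I\otimes\tau)(\rho\otimes I)\rho,\]
that is, $(I\otimes\Delta_C)\rho-(\rho\otimes I)\rho+(I\otimes\tau)(\rho\otimes I)\rho=0$, which is exactly Eq.~(\ref{com}). Hence $(M,\rho)$ is a right Lie comodule of $(C,\Delta_C)$.

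Finally I would specialise Eq.~(\ref{n7}) to $\Delta_M=0$: the summands $(R_M\otimes\phi)\Delta_M$ and $(I\otimes\phi)\Delta_MR_M$ both vanish, leaving
\[(R_M\otimes R_C)\rho=(I\otimes R_C+R_M\otimes I+\lambda)\rho R_M,\]
which, after reordering the terms inside the bracket, is precisely Eq.~(\ref{rR}). Combined with the previous paragraph, this shows that $(M,\rho,R_M)$ is a right Lie comodule of $(C,\Delta_C,R_C)$, as claimed. The only point that needs care is the bookkeeping observation that an abelian extension forces $\Delta_M=0$; once that is in place the non-abelian cocycle conditions Eqs.~(\ref{n2}) and (\ref{n7}) degenerate verbatim into the comodule axioms (\ref{com}) and (\ref{rR}), while the remaining conditions Eqs.~(\ref{n1}), (\ref{n5}), (\ref{n6}) --- which instead govern the compatibility of $h$ and $\phi$ --- play no role here. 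So I do not expect a genuine obstacle in this argument.
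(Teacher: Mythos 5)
Your argument is correct, but it is organized differently from the paper's proof. The paper argues by a self-contained direct computation: writing $\rho=(g\otimes t)\Delta_E$ as in (\ref{2co2}), it expands $(I\otimes\Delta_C)\rho-(\rho\otimes I)\rho+(I\otimes\tau)(\rho\otimes I)\rho$ using the co-Jacobi identity for $\Delta_E$ together with the definition (\ref{2co1}) of $h$, and finds that this expression equals $-(I\otimes h)\Delta_M$, which vanishes in the abelian case; the compatibility (\ref{rR}) is then checked ``by the same token.'' You instead invoke Proposition \ref{CY} --- that $(h_t,\rho_t,\phi_t)$ is a non-abelian 2-cocycle --- and specialize the cocycle identities (\ref{n2}) and (\ref{n7}) to $\Delta_M=0$, which indeed collapse verbatim to the comodule axioms (\ref{com}) and (\ref{rR}); this is legitimate, since an abelian extension is a special instance of Definition \ref{Def:NAE}, and your reduction is shorter and makes transparent exactly which cocycle conditions are being used. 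The trade-off is that the computational content is deferred to Proposition \ref{CY}, whose proof the paper only indicates as a direct calculation, whereas the paper's in-line computation is independent of that proposition and, as a by-product, exhibits the precise obstruction $(I\otimes h)\Delta_M$, which is what underlies the subsequent Remark that $(M,\rho,R_M)$ is not a comodule for genuinely non-abelian extensions. Your additional observation that, by Lemma \ref{Enc0}, $\rho_t$ is independent of the choice of retraction once $\Delta_M=0$ is correct and is a worthwhile point that the paper leaves implicit.
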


\begin{proof}
Since $g$ is surjective, for all $m\in M$, there is an element $e\in E$ such that
$g(e)=m$. By direct computations,
\begin{align*}
	&(I\otimes\Delta_C)\rho(m)-(\rho\otimes I)\rho(m)+(I\otimes\tau)(\rho\otimes I)\rho(m)\\
	=&(I\otimes\Delta_C)\rho g(e)-(\rho\otimes I)\rho g(e)+(I\otimes\tau)(\rho\otimes I)\rho g(e)\\
	=&(g\otimes\Delta_C t)\Delta_E(e)-(\rho g\otimes t)\Delta_E(e)+(I\otimes\tau)(\rho g\otimes t)\Delta_E(e)\\
	=&(g\otimes\Delta_C t)\Delta_E(e)-((g\otimes t)\Delta_E\otimes t)\Delta_E(e)+(I\otimes\tau)((g\otimes t)\Delta_E\otimes t)\Delta_E(e)\\
	=&(g\otimes\Delta_C t)\Delta_E(e)-(g\otimes (t\otimes t)\Delta_E)\Delta_E(e)\\
	=&(g\otimes(\Delta_C t-(t\otimes t)\Delta_E))\Delta_E(e)\\
	=&(g\otimes-h g)\Delta_E(e)=(I\otimes-h)\Delta_M(m)=0.
\end{align*}
By the same token, (\ref{rR}) holds. Thus, $(M,\rho,R_M)$ is a right Lie comodule of $(C,\Delta_C,R_C)$.
\end{proof}

\begin{rem} In the case of non-abelian extensions, $(M,\rho,R_M)$ is not a right Lie comodule of $(C,\Delta_C,R_C)$.\end{rem}

\begin{Theo} \begin{enumerate}[label=$(\roman*)$]
		\item The triple $(C\oplus
		M,\Delta_{(h,\rho)},R_{\phi})$ is a $\lambda$-weighted Rota-Baxter Lie coalgebra
		if and only if $(h,\phi)$ is a 2-cocycle of $(C,\Delta_C,R_C)$ with coefficients in the
		Lie comodule $(M,\rho,R_M)$.
		\item Abelian extensions of a $\lambda$-weighted Rota-Baxter Lie coalgebra
		$(C,\Delta_C,R_C)$ by $(M,\Delta_M,R_M)$ are classified
		by the second cohomology group $\mathrm{\bar{H}}_{RB}^{2}(M, C)$ of
		$(C,\Delta_C,R_C)$ with coefficients in $(M,\rho,R_M)$.
	\end{enumerate}
\end{Theo}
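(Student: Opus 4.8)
The plan is to obtain both parts by specializing the non-abelian theory of Section~4 to the abelian situation, in which $\Delta_M=0$. For part~(i), I would start from Proposition~\ref{Pro: LCN}: $(C\oplus M,\Delta_{(h,\rho)},R_\phi)$ is a $\lambda$-weighted Rota-Baxter Lie coalgebra precisely when $(h,\rho,\phi)$ satisfies the non-abelian $2$-cocycle identities (\ref{n0}), (\ref{n1}), (\ref{n2}), (\ref{n5}), (\ref{n6}), (\ref{n7}). Setting $\Delta_M=0$, identity (\ref{n0}) is automatic since $h\in\mathrm{Hom}(M,\wedge^2 C)$, identity (\ref{n5}) becomes trivial, identity (\ref{n2}) collapses to the comodule axiom (\ref{com}), and identity (\ref{n7}) collapses to (\ref{rR}); the last two hold because $(M,\rho,R_M)$ is a right Lie comodule of $(C,\Delta_C,R_C)$ by the preceding Proposition. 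The only surviving conditions are (\ref{n1}) and (\ref{n6}), and here I would invoke the explicit unwindings already recorded in (\ref{Z21}) and (\ref{Z22}): reading the latter with $f=h$, $g=\phi$ and using $\tau(\phi\otimes I)=(I\otimes\phi)\tau$, $\tau(\phi\otimes R_C)=(R_C\otimes\phi)\tau$, identity (\ref{n1}) becomes exactly $\partial^2(h)=0$ while (\ref{n6}) becomes exactly $-\tilde{\partial}^1(\phi)-\tfrac12\delta^2(h)=0$. Hence $(C\oplus M,\Delta_{(h,\rho)},R_\phi)$ is a $\lambda$-weighted Rota-Baxter Lie coalgebra iff $\partial^2_{RB}(h,\phi)=0$, i.e. $(h,\phi)\in\mathrm{\bar{Z}}^2_{RB}(M,C)$, which is the content of~(i).

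For part~(ii), the starting point is Theorem~\ref{Ccy2}, which gives $\mathrm{Ext}_{nab}(M,C)\cong\mathrm{H}^2_{nab}(M,C)$ through the class $[(h_t,\rho_t,\phi_t)]$ of the $2$-cocycle induced by a retraction $t$. When $\Delta_M=0$ a non-abelian extension is, by Definition~\ref{Def:NAE}, an abelian extension, so $\mathrm{Ext}_{ab}(M,C)=\mathrm{Ext}_{nab}(M,C)$; moreover by Lemma~\ref{Enc0} one has $\rho_{t_2}-\rho_{t_1}=(I\otimes\varphi)\Delta_M=0$, so the comodule structure $\rho$ induced by the extension is the fixed one of the preceding Proposition and is not part of the varying data. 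By part~(i), the non-abelian $2$-cocycles carrying this $\rho$ are exactly the elements of $\mathrm{\bar{Z}}^2_{RB}(M,C)$. It then remains to match the two equivalence relations: with $\Delta_M=0$, the equivalences (\ref{eqc1}), (\ref{eqc2}), (\ref{eqc3}) say that $(h',\phi')$ and $(h,\phi)$ differ by $\big((\varphi\otimes I)\rho-\tau(\varphi\otimes I)\rho-\Delta_C\varphi,\ \varphi R_M-R_C\varphi\big)=2\,\bar{\partial}^1_{RB}(\varphi)$, and since $\varphi\mapsto 2\varphi$ is a bijection of $\bar{C}^1_{RB}(M,C)=\mathrm{Hom}(M,C)$ the totality of such differences is precisely $\mathrm{\bar{B}}^2_{RB}(M,C)$ by (\ref{B2}). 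Combining, $\mathrm{Ext}_{ab}(M,C)\cong\mathrm{\bar{Z}}^2_{RB}(M,C)/\mathrm{\bar{B}}^2_{RB}(M,C)=\mathrm{\bar{H}}^2_{RB}(M,C)$.

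The step I expect to be the main obstacle is the coboundary matching in part~(ii): one has to keep track of the factor $\tfrac12$ built into $\partial^*_{RB}$, together with the identity $\tau(\varphi\otimes I)=(I\otimes\varphi)\tau$, in order to see that ``equivalent abelian extensions'' translates exactly into ``cohomologous in $\{\bar{C}^*_{RB}(M,C),\bar{\partial}^*_{RB}\}$''. Everything else is a routine specialization of Section~4, the genuine inputs being only that $\Delta_M=0$ forces $\rho$ to be fixed and reduces (\ref{n2}), (\ref{n5}), (\ref{n7}) to automatic identities.
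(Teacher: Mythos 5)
Your proposal is correct and follows essentially the same route as the paper, whose proof of this theorem is just the instruction to specialize Proposition \ref{Pro: LCN} and Theorem \ref{Ccy2} to the case $\Delta_M=0$; you carry out exactly that specialization, correctly identifying that (\ref{n0}), (\ref{n2}), (\ref{n5}), (\ref{n7}) become automatic (given the comodule structure), that (\ref{n1}) and (\ref{n6}) match (\ref{Z21})--(\ref{Z22}), and that the cocycle equivalences (\ref{eqc1})--(\ref{eqc3}) match $\mathrm{\bar{B}}^2_{RB}(M,C)$ up to the harmless factor $2$ coming from the $\tfrac12$ in $\bar{\partial}^1_{RB}$.
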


\begin{proof}
	\begin{enumerate}[label=$(\roman*)$]
\item It is similar to the proof of Proposition \ref{Pro: LCN}.\item By the same token as in Theorem \ref{Ccy2}, we can get the statement.
\end{enumerate}
\end{proof}

\begin{Theo}\label{ABEX} Let $\mathcal{E}: 0\rightarrow C\stackrel{f}{\rightarrow} E\stackrel{g}{\rightarrow}M\rightarrow 0$ be an abelian extension of a  $\lambda$-weighted Rota-Baxter Lie coalgebra
 $(C,\Delta_C,R_C)$ by $(M,\Delta_M,R_M)$ and $t$ be its retraction. Assume that $(h,\phi)$ is a 2-cocycle and $(M,\rho,R_M)$ is a right Lie comodule of $(C,\Delta_C,R_C)$ associated to $\mathcal{E}$. A pair $(\alpha,\beta)\in \mathrm{Aut}(C
)\times \mathrm{Aut}(M)$ is extensible with respect to the abelian extension $\mathcal{E}$ if and only if there is a
linear map $\varphi:M\rightarrow C$
satisfying the following conditions:
\begin{equation}\nonumber h\beta-(\alpha\otimes\alpha)h=(\varphi\otimes\alpha)\rho-\tau(\varphi\otimes\alpha)\rho
-\Delta_C\varphi,\end{equation}
 \begin{equation}\nonumber\rho\beta=(\beta\otimes\alpha)\rho,\end{equation}
\begin{equation}\nonumber\phi\beta-\alpha\phi=\varphi R_M-R_C\varphi.\end{equation}
\end{Theo}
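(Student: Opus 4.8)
The plan is to obtain this theorem as a direct specialization of Theorem \ref{MT}. Since $(M,\Delta_M,R_M)$ is an abelian $\lambda$-weighted Rota-Baxter Lie coalgebra, its cocomultiplication vanishes: $\Delta_M=0$. Recall also that, by hypothesis, $(h,\phi)=(h_{t},\phi_{t})$ and $\rho=\rho_{t}$ are the data attached to $\mathcal{E}$ through the retraction $t$ via Eqs.~(\ref{2co1})--(\ref{2co3}).

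First I would verify that the triple $(h,\rho,\phi)$ is a non-abelian $2$-cocycle on $(C,\Delta_C,R_C)$ with values in $(M,\Delta_M,R_M)$, so that Theorem \ref{MT} applies to $\mathcal{E}$ regarded as a (non-abelian) extension in the sense of Definition \ref{Def:NAE}. Setting $\Delta_M=0$, the defining identities (\ref{n0})--(\ref{n7}) collapse as follows: (\ref{n0}) and (\ref{n1}) are exactly the cocycle conditions satisfied by the $2$-cocycle $(h,\phi)$ associated to $\mathcal{E}$; (\ref{n2}) reduces to the comodule axiom (\ref{com}) for $(M,\rho)$; (\ref{n5}) becomes the trivial identity $0=0$; (\ref{n7}) reduces to (\ref{rR}); and (\ref{n6}) is precisely the mixed Rota-Baxter compatibility contained in the $2$-cocycle condition on $(h,\phi)$. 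All of these hold by hypothesis and by the preceding results of this section, so $(h,\rho,\phi)\in\mathrm{Z}_{nab}^{2}(M,C)$ and is the non-abelian $2$-cocycle corresponding to $\mathcal{E}$ induced by $t$.

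Next I would apply Theorem \ref{MT} to this non-abelian $2$-cocycle $(h,\rho,\phi)$: the pair $(\alpha,\beta)\in\mathrm{Aut}(C)\times\mathrm{Aut}(M)$ is extensible with respect to $\mathcal{E}$ if and only if there exists a linear map $\varphi:M\rightarrow C$ satisfying Eqs.~(\ref{AE1})--(\ref{AE3}). Substituting $\Delta_M=0$, the term $(\varphi\otimes\varphi)\Delta_M$ disappears from (\ref{AE1}), which becomes $h\beta-(\alpha\otimes\alpha)h=(\varphi\otimes\alpha)\rho-\tau(\varphi\otimes\alpha)\rho-\Delta_C\varphi$; the term $(\beta\otimes\varphi)\Delta_M$ disappears from (\ref{AE2}), which becomes $\rho\beta=(\beta\otimes\alpha)\rho$; and (\ref{AE3}) is unchanged, giving $\phi\beta-\alpha\phi=\varphi R_M-R_C\varphi$. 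This yields precisely the three stated conditions, proving the equivalence.

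The only step requiring genuine care is the reduction in the second paragraph, namely checking that the mixed Rota-Baxter identities (\ref{n6}) and (\ref{n7}) really do follow, after setting $\Delta_M=0$, from the $2$-cocycle condition on $(h,\phi)$ and from the comodule Rota-Baxter axiom (\ref{rR}) respectively; once that identification is in place, the conclusion is an immediate consequence of Theorem \ref{MT} and everything else is bookkeeping.
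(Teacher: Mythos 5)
Your proposal is correct and follows essentially the same route as the paper: the paper's own proof of this theorem is literally a one-line appeal to Theorem \ref{MT}, specialized to the abelian case where $\Delta_M=0$ kills the terms $(\varphi\otimes\varphi)\Delta_M$ and $(\beta\otimes\varphi)\Delta_M$ in Eqs.~(\ref{AE1})--(\ref{AE2}). Your extra verification that $(h,\rho,\phi)$ is a non-abelian $2$-cocycle is harmless but not strictly needed, since Proposition \ref{CY} already gives this for any extension, abelian or not.
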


\begin{proof}
It can be get directly from Theorem \ref{MT}.
\end{proof}

Let $\mathcal{E}: 0\rightarrow C\stackrel{f}{\rightarrow} E\stackrel{g}{\rightarrow}M\rightarrow 0$ be an abelian extension of
the$\lambda$-weighted Rota-Baxter Lie coalgebra
 $(C,\Delta_C,R_C)$ by $(M,\Delta_M,R_M)$ and $t$ be its retraction.
 Assume that $(h,\phi)$ is a 2-cocycle and $(M,\rho,R_M)$ is a right Lie comodule of $(C,\Delta_C,R_C)$ associated to $\mathcal{E}$.

The space
 \begin{equation}\nonumber C_{\rho}=\{(\alpha,\beta)\in \mathrm{Aut}(C
)\times \mathrm{Aut}(M)|\rho\beta=(\beta\otimes\alpha)\rho,~~\forall~m\in V\}\end{equation}
 is called the space of compatible pairs of automorphisms.
It is easy to verify that $C_{\rho}$ is a subgroup of $\mathrm{Aut}(C
)\times \mathrm{Aut}(M)$. For all $(\alpha,\beta)\in \mathrm{Aut}(C
)\times \mathrm{Aut}(M)$, $(h_{(\alpha,\beta)},\phi_{(\alpha,\beta)})$ may not be a 2-cocycle.
 Indeed, we have

\begin{Prop} \label{Cac1} The tuple $(h_{(\alpha,\beta)},\phi_{(\alpha,\beta)})$ is a
 2-cocycle corresponding to the abelian extension $\mathcal{E}$ if $(\alpha,\beta)\in C_{\rho}$.
\end{Prop}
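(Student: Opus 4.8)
The plan is to read the statement off Proposition~\ref{EEnc0} together with the observation that an abelian extension has $\Delta_M=0$. Recall that Proposition~\ref{EEnc0} asserts that, for \emph{every} $(\alpha,\beta)\in\mathrm{Aut}(C)\times\mathrm{Aut}(M)$, the triple $(h_{(\alpha,\beta)},\rho_{(\alpha,\beta)},\phi_{(\alpha,\beta)})$ of~(\ref{NCA1}) is a non-abelian $2$-cocycle on $(C,\Delta_C,R_C)$ with values in $(M,\Delta_M,R_M)$, i.e.\ it satisfies Eqs.~(\ref{n0}), (\ref{n1}), (\ref{n2}), (\ref{n5}), (\ref{n6}), (\ref{n7}). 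Since $\mathcal{E}$ is an abelian extension we have $\Delta_M=0$, so in those identities every summand containing $\Delta_M$ disappears: (\ref{n5}) becomes trivial, (\ref{n2}) and (\ref{n7}) reduce to the comodule axioms (\ref{com}) and (\ref{rR}) for $\rho_{(\alpha,\beta)}$ (carrying no information about $h_{(\alpha,\beta)}$ or $\phi_{(\alpha,\beta)}$), (\ref{n0}) is automatic because $h_{(\alpha,\beta)}$ already takes values in $\wedge^2 C$, while (\ref{n1}) is exactly Eq.~(\ref{Z21}) and (\ref{n6}) with $\Delta_M=0$ is exactly Eq.~(\ref{Z22}), now written for the pair $(h_{(\alpha,\beta)},\phi_{(\alpha,\beta)})$ with structure map $\rho_{(\alpha,\beta)}$.

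Next I would invoke the hypothesis $(\alpha,\beta)\in C_\rho$, which by definition says $\rho\beta=(\beta\otimes\alpha)\rho$; hence
\[\rho_{(\alpha,\beta)}=(\beta\otimes\alpha)\rho\beta^{-1}=\rho\beta\beta^{-1}=\rho,\]
so the structure map occurring in the reduced equations (\ref{Z21})--(\ref{Z22}) for $(h_{(\alpha,\beta)},\phi_{(\alpha,\beta)})$ is precisely the comodule map $\rho$ of $\mathcal{E}$ given by~(\ref{2co2}). Combining this with the previous paragraph, $(h_{(\alpha,\beta)},\phi_{(\alpha,\beta)})$ satisfies (\ref{Z21})--(\ref{Z22}) with respect to $(M,\rho,R_M)$, that is, $(h_{(\alpha,\beta)},\phi_{(\alpha,\beta)})\in\mathrm{\bar{Z}}^2_{RB}(M,C)$; this is exactly the assertion that it is a $2$-cocycle corresponding to the abelian extension $\mathcal{E}$.

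A second, self-contained route avoids quoting Proposition~\ref{EEnc0}: substitute $h_{(\alpha,\beta)}=(\alpha\otimes\alpha)h\beta^{-1}$ and $\phi_{(\alpha,\beta)}=\alpha\phi\beta^{-1}$ directly into (\ref{Z21}) and (\ref{Z22}) and push $\alpha$ (resp.\ $\beta^{-1}$) across each term, using that $\alpha$ is a homomorphism of $\lambda$-weighted Rota-Baxter Lie coalgebras ($(\alpha\otimes\alpha)\Delta_C=\Delta_C\alpha$ and $R_C\alpha=\alpha R_C$), that $\beta$ intertwines $R_M$ with $\Delta_M=0$, and that $(\alpha,\beta)\in C_\rho$ gives $(\beta\otimes\alpha)\rho=\rho\beta$; both equations then collapse to (\ref{Z21})--(\ref{Z22}) for $(h,\phi)$, which hold by assumption. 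In either approach the only real obstacle is the bookkeeping with the various $\tau$'s and tensor slots: checking in the first route that the $\Delta_M=0$ specialization of (\ref{n1}) and (\ref{n6}) lands precisely on (\ref{Z21}) and (\ref{Z22}), and in the second route that the slots where $\rho$ meets $R_C$ or $R_M$ are rewritten via (\ref{rR}) and the compatibility in $C_\rho$. Neither step is conceptually deep, but both demand care with the ordering of tensor factors.
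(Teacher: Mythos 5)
Your proposal is correct, and your primary route is genuinely different from (and slicker than) the paper's. The paper's proof of this proposition is literally ``take the same procedure as the proof of Proposition~\ref{EEnc0}'', i.e.\ it redoes the direct verification by substituting $h_{(\alpha,\beta)}=(\alpha\otimes\alpha)h\beta^{-1}$, $\phi_{(\alpha,\beta)}=\alpha\phi\beta^{-1}$ into the cocycle identities and pulling $\alpha\otimes\alpha\otimes\alpha$ through each term --- this is exactly your second, self-contained route. Your first route instead treats the statement as a corollary of Proposition~\ref{EEnc0} as already proved: since the abelian case means $\Delta_M=0$, the non-abelian conditions (\ref{n5}) trivialize, (\ref{n2}) and (\ref{n7}) collapse to the comodule axioms (\ref{com}) and (\ref{rR}), and (\ref{n1}), (\ref{n6}) become precisely (\ref{Z21}), (\ref{Z22}); the hypothesis $(\alpha,\beta)\in C_\rho$ is then used exactly where it must be, to get $\rho_{(\alpha,\beta)}=(\beta\otimes\alpha)\rho\beta^{-1}=\rho$, so that the reduced identities are cocycle conditions with respect to the comodule $(M,\rho,R_M)$ attached to $\mathcal{E}$ rather than with respect to $\rho_{(\alpha,\beta)}$ (this is also why the conclusion can fail without the compatibility assumption, as the paper remarks). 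What your first route buys is that no computation is repeated; what the paper's (and your second) route buys is independence from the precise bookkeeping that the $\Delta_M=0$ specialization of (\ref{n1}) and (\ref{n6}) lands on (\ref{Z21})--(\ref{Z22}), a matching of signs, $\tau$'s and tensor slots that you correctly identify as the only delicate point and which does check out.
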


\begin{proof}
Take the same procedure as the proof of Proposition \ref{EEnc0}.
\end{proof}

\begin{Theo} \label{Cac2} Let $\mathcal{E}: 0\rightarrow C\stackrel{f}{\rightarrow} E\stackrel{g}{\rightarrow}M\rightarrow 0$ be an abelian extension of a $\lambda$-weighted Rota-Baxter Lie coalgebra
 $(C,\Delta_C,R_C)$ by $(M,\Delta_M,R_M)$ and $(h,\phi)$ be a
 2-cocycle associated to $\mathcal{E}$. A pair $(\alpha,\beta)\in \mathrm{Aut}(C
)\times \mathrm{Aut}(M)$ is extensible with respect to the abelian extension
$\mathcal{E}$ if and only if the following conditions hold:
\begin{enumerate}[label=$(\roman*)$]
	\item $(\alpha,\beta)\in C_{\rho}$.
	\item $(h,\phi)$ and $(h_{(\alpha,\beta)},\phi_{(\alpha,\beta)})$ are in the same cohomological class.
\end{enumerate}
\end{Theo}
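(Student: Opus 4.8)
The plan is to read Theorem~\ref{Cac2} off Theorem~\ref{EEnc} (equivalently Theorem~\ref{ABEX}) by specializing to the abelian situation, where $(M,\Delta_M,R_M)$ being an abelian $\lambda$-weighted Rota-Baxter Lie coalgebra means $\Delta_M=0$. Recall from Theorem~\ref{EEnc} that $(\alpha,\beta)\in\mathrm{Aut}(C)\times\mathrm{Aut}(M)$ is extensible with respect to $\mathcal{E}$ if and only if $(h_{(\alpha,\beta)},\rho_{(\alpha,\beta)},\phi_{(\alpha,\beta)})$ and $(h,\rho,\phi)$ are equivalent non-abelian 2-cocycles, i.e. there is a linear map $\varphi:M\to C$ satisfying Eqs.~(\ref{eqc1})--(\ref{eqc3}). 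The first step is to simplify these three identities under $\Delta_M=0$: Eq.~(\ref{eqc2}) collapses to $\rho_{(\alpha,\beta)}=\rho$, while Eqs.~(\ref{eqc1}) and (\ref{eqc3}) lose their $(\varphi\otimes\varphi)\Delta_M$ contribution and become exactly the coboundary relations of the reduced complex recorded in (\ref{B2}) for the cochain $\varphi$ (up to the overall scalar appearing there).

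Second, I would identify the $\rho$-component of the equivalence with membership in $C_\rho$: unwinding (\ref{NCA1}) gives $\rho_{(\alpha,\beta)}=(\beta\otimes\alpha)\rho\beta^{-1}$, so $\rho_{(\alpha,\beta)}=\rho$ holds precisely when $(\beta\otimes\alpha)\rho=\rho\beta$, which is condition (i). This yields the ``only if'' direction at once: if $(\alpha,\beta)$ is extensible, choose $\varphi$ as above; Eq.~(\ref{eqc2}) forces (i), and then Proposition~\ref{Cac1} ensures $(h_{(\alpha,\beta)},\phi_{(\alpha,\beta)})$ is a genuine 2-cocycle while Eqs.~(\ref{eqc1}), (\ref{eqc3}) say it is cohomologous to $(h,\phi)$, which is (ii). Concretely, composing the equations of Theorem~\ref{ABEX} on the right with $\beta^{-1}$ and using (i) together with $R_M\beta^{-1}=\beta^{-1}R_M$ exhibits $\varphi\beta^{-1}$ as the cochain realizing (ii), exactly as in the proof of Theorem~\ref{EEnc}.

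For the ``if'' direction, assume (i) and (ii). Condition (i) is already the middle equation $\rho\beta=(\beta\otimes\alpha)\rho$ required by Theorem~\ref{ABEX} (the $(\beta\otimes\varphi)\Delta_M$ term of Theorem~\ref{MT} vanishing since $\Delta_M=0$). From (ii) pick a linear map $\psi:M\to C$ with $h-h_{(\alpha,\beta)}$ and $\phi-\phi_{(\alpha,\beta)}$ given by the coboundary of $\psi$ as in (\ref{B2}). I would then set $\varphi:=\psi\beta$ (rescaled by the scalar in (\ref{B2})) and verify the remaining two equations of Theorem~\ref{ABEX}: composing the $h$-identity for $\psi$ on the right with $\beta$ and inserting (i) in the form $(\psi\otimes I)\rho\beta=(\psi\otimes I)(\beta\otimes\alpha)\rho=(\psi\beta\otimes\alpha)\rho$ produces $h\beta-(\alpha\otimes\alpha)h=(\varphi\otimes\alpha)\rho-\tau(\varphi\otimes\alpha)\rho-\Delta_C\varphi$; composing the $\phi$-identity for $\psi$ on the right with $\beta$ and using that $\beta$ is a Rota-Baxter Lie coalgebra automorphism, so $R_M\beta=\beta R_M$, produces $\phi\beta-\alpha\phi=\varphi R_M-R_C\varphi$. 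Theorem~\ref{ABEX} then gives that $(\alpha,\beta)$ is extensible.

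The part needing genuine care---and the main obstacle---is the bookkeeping that makes the two versions of the criterion line up: matching Eqs.~(\ref{eqc1})--(\ref{eqc3}) of the non-abelian equivalence relation with the coboundary operator $\bar{\partial}^1_{RB}$ of the reduced complex (keeping track of the factors $\frac{1}{2}$ and the $\mathrm{Alt}$'s in (\ref{B2})), the reparametrization $\psi\leftrightarrow\psi\beta$, and the systematic use of $\Delta_M=0$ and $R_M\beta=\beta R_M$. None of these steps is deep, but they must be carried out consistently so that ``equivalent non-abelian 2-cocycles'' and ``same cohomology class in $\bar{H}^2_{RB}$ together with $(\alpha,\beta)\in C_\rho$'' really do describe the same condition.
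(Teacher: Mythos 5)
Your argument is correct and takes essentially the same route as the paper, whose proof of Theorem \ref{Cac2} is precisely ``combine Theorem \ref{EEnc} with Proposition \ref{Cac1}''; you simply write out the abelian specialization $\Delta_M=0$, the identification of $\rho_{(\alpha,\beta)}=\rho$ with membership in $C_\rho$, and the $\varphi\leftrightarrow\varphi\beta^{-1}$ bookkeeping that the paper leaves implicit. The only remaining point is the harmless rescaling (the factor $\tfrac12$) matching Eqs.~(\ref{eqc1})--(\ref{eqc3}) with the coboundaries recorded in (\ref{B2}), which you already flag and handle correctly.
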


\begin{proof}
Combining Theorem \ref{EEnc} and Proposition \ref{Cac1}, we get the statement.
\end{proof}

In the case of abelian extensions, $\mathrm{Z}^{1}_{nab}(M,C)$ defined by (\ref{W5}) turns to $\mathrm{\bar{Z}}_{RB}^{1}(M,C)$ given by (\ref{Ccy1}).
 In the light of Theorem \ref{Wm5} and Theorem \ref{Cac2}, we have the following exact sequence:

\begin{Theo} There is an exact sequence:
\[\xymatrix@C=20pt{0\ar[r]&\mathrm{\bar{Z}}_{RB}^{1}(M,C)\ar[r]^-i&\mathrm{Aut}_{C}(E)\ar[r]^-K&C_{\rho}\ar[r]^-W&\mathrm{\bar{H}}^2_{RB}(M,C).}\]
\end{Theo}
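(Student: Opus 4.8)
The plan is to derive this sequence as the abelian specialization of the Wells sequence of Theorem~\ref{Wm5}, using Theorem~\ref{Cac2} to replace the codomain $\mathrm{H}^2_{nab}(M,C)$ of the Wells map by the reduced cohomology group $\mathrm{\bar{H}}^2_{RB}(M,C)$ and to restrict its domain from $\mathrm{Aut}(C)\times\mathrm{Aut}(M)$ to the subgroup $C_\rho$. First I would check that the maps are well defined with the asserted domains and codomains. Given $\gamma\in\mathrm{Aut}_{C}(E)$, put $(\alpha,\beta)=K(\gamma)$ as in~(\ref{W4}); using $ft+sg=I_{E}$ from~(\ref{W0}) together with $gf=0$ one gets $f\alpha=ft\gamma f=(I_{E}-sg)\gamma f=\gamma f$ and $\beta g=g\gamma$, so $\gamma$ exhibits $(\alpha,\beta)$ as extensible with respect to $\mathcal{E}$; by Theorem~\ref{Cac2}(i) this forces $(\alpha,\beta)\in C_\rho$, that is, $K$ maps $\mathrm{Aut}_{C}(E)$ into $C_\rho$. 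For $(\alpha,\beta)\in C_\rho$, Proposition~\ref{Cac1} says that $(h_{(\alpha,\beta)},\phi_{(\alpha,\beta)})$ is a genuine $2$-cocycle, hence $(h_{(\alpha,\beta)},\phi_{(\alpha,\beta)})-(h,\phi)\in\mathrm{\bar{Z}}^2_{RB}(M,C)$ and $W(\alpha,\beta)$ is a well-defined class in $\mathrm{\bar{H}}^2_{RB}(M,C)$; its independence of the chosen retraction is the computation of Proposition~\ref{Wm1}. Finally, since $\Delta_M=0$ here, the defining equations~(\ref{W5}) of $\mathrm{Z}^1_{nab}(M,C)$ collapse exactly to those of $\mathrm{\bar{Z}}^1_{RB}(M,C)$ in~(\ref{Ccy1}), so the leftmost term is the correct one.

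Exactness at $\mathrm{\bar{Z}}^1_{RB}(M,C)$ and at $\mathrm{Aut}_{C}(E)$ is immediate from Proposition~\ref{Wm4}: the homomorphism $\chi$ there is a group isomorphism $\mathrm{Ker}\,K\cong\mathrm{Z}^1_{nab}(M,C)=\mathrm{\bar{Z}}^1_{RB}(M,C)$, and the map $i$ in the statement is $\chi^{-1}$ followed by the inclusion $\mathrm{Ker}\,K\hookrightarrow\mathrm{Aut}_{C}(E)$; hence $i$ is injective and $\mathrm{Im}\,i=\mathrm{Ker}\,K$.

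The remaining exactness, at $C_\rho$, asks for $\mathrm{Ker}\,W=\mathrm{Im}\,K$, and it mirrors the proof of Theorem~\ref{Wm3}. If $(\alpha,\beta)=K(\gamma)$ for some $\gamma\in\mathrm{Aut}_{C}(E)$, the first paragraph shows that $(\alpha,\beta)$ is extensible with respect to $\mathcal{E}$, so by Theorem~\ref{Cac2} the $2$-cocycles $(h_{(\alpha,\beta)},\phi_{(\alpha,\beta)})$ and $(h,\phi)$ lie in the same cohomology class, that is, $W(\alpha,\beta)=0$. Conversely, let $(\alpha,\beta)\in C_\rho$ with $W(\alpha,\beta)=0$; then $(h_{(\alpha,\beta)},\phi_{(\alpha,\beta)})$ and $(h,\phi)$ are cohomologous, and together with $(\alpha,\beta)\in C_\rho$ these are precisely the two conditions of Theorem~\ref{Cac2}, so $(\alpha,\beta)$ is extensible: there is $\gamma\in\mathrm{Aut}_{C}(E)$ with $f\alpha=\gamma f$ and $\beta g=g\gamma$. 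Composing the first identity with $t$ and using $tf=I_{C}$ gives $t\gamma f=\alpha$, and the second gives $g\gamma=\beta g$; comparison with~(\ref{W4}) yields $K(\gamma)=(\alpha,\beta)$, so $(\alpha,\beta)\in\mathrm{Im}\,K$. This establishes exactness throughout.

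Since every ingredient is already available, no step presents a genuine difficulty; the one point requiring care is the bookkeeping of the target, namely that the difference of $2$-cocycles and the class in~(\ref{W1}) must now be read inside the reduced complex $\{\bar{C}^{*}_{RB}(M,C),\bar{\partial}^{*}_{RB}\}$. I would therefore first confirm that in the abelian case $\bar{\partial}^1_{RB}$ sends $C^1(M,C)=\bar{C}^1_{RB}(M,C)$ onto exactly the space cut out by~(\ref{eqc1})--(\ref{eqc3}) specialized to $\Delta_M=0$; this guarantees that ``$W(\alpha,\beta)=0$ in $\mathrm{\bar{H}}^2_{RB}(M,C)$'' and ``$(h,\phi)$ and $(h_{(\alpha,\beta)},\phi_{(\alpha,\beta)})$ are cohomologous'' in the sense of Theorem~\ref{Cac2} are literally the same assertion, which is what makes the two appeals to Theorem~\ref{Cac2} above legitimate.
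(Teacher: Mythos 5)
Your proposal is correct and follows essentially the same route as the paper, which obtains the sequence by specializing the non-abelian Wells sequence of Theorem \ref{Wm5} via Theorem \ref{Cac2} together with the observation that $\mathrm{Z}^{1}_{nab}(M,C)$ reduces to $\mathrm{\bar{Z}}^{1}_{RB}(M,C)$ in the abelian case. You merely spell out the well-definedness checks (that $K$ lands in $C_{\rho}$, that $W$ takes values in $\mathrm{\bar{H}}^{2}_{RB}(M,C)$ via Proposition \ref{Cac1}, and the identification of the equivalence relation with reduced coboundaries) that the paper leaves implicit.
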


\begin{center}{\textbf{Acknowledgments}}
\end{center}
Project supported by the National Natural Science Foundation of
China (11871421), the Natural Science Foundation of Zhejiang
Province of China (LY19A010001) and the Science and Technology
Planning Project of Zhejiang Province (2022C01118).

\begin{center} {\textbf{Statements and Declarations}}
\end{center}
 All datasets underlying the conclusions of the paper are available
to readers. No conflict of interest exits in the submission of this
manuscript.

	
\end{document}